\let\reftagform@=\tagform@
\def\tagform@#1{\maketag@@@{(\ignorespaces\textcolor{magenta}{#1}\unskip\@@italiccorr)}}
\renewcommand{\eqref}[1]{\textup{\reftagform@{\ref{#1}}}}
\DeclareUrlCommand\ULurl@@{%
  \def\UrlLeft{\uline\bgroup}%
  \def\UrlRight{\egroup}}
\def\ULurl@#1{\hyper@linkurl{\ULurl@@{#1}}{#1}}
\DeclareRobustCommand*\ULurl{\hyper@normalise\ULurl@}
\def\lessim{\ \lower4pt\hbox{$
		\buildrel{\displaystyle <}\over\sim$}\ }
\def\gessim{\ \lower4pt\hbox{$\buildrel{\displaystyle >}
		\over\sim$}\ }
\newtheorem{theorem}{\bf Theorem}
\newtheorem{lemma}[theorem]{\bf Lemma}
\newtheorem{proposition}[theorem]{\bf Proposition}
\theoremstyle{remark}
\newtheorem{remark}{Remark}
\newenvironment{Proof of lemma}{\noindent{\bf Proof of Lemma}}{\hfill$\Box$\newline}
\newenvironment{Proof of theorem}{\noindent{\bf Proof of Theorem}}{\hfill{\footnotesize${\square}$}\newline}
\newenvironment{Proof of theorems}{\noindent{\bf Proof of Theorems}}{\hfill$\Box$\newline}
\newenvironment{Proof of proposition}{\noindent{\bf Proof of Proposition}}{\hfill$\Box$\newline}
\newenvironment{Proof of propositions}{\noindent{\bf Proof of Propositions}}{\hfill$\Box$\newline}
\newenvironment{Proof of exercise}{\noindent{\it Proof of Exercise:}}{\hfill$\Box$}
\begin{document}

\title{On properties of the spherical mixed vector $p$-spin model}

\author{Antonio Auffinger \thanks{Department of Mathematics, Northwestern University, tuca@northwestern.edu, research partially supported by NSF Grant CAREER DMS-1653552, Simons Foundation/SFARI (597491-RWC), and  NSF Grant 1764421.} \\
\small{Northwestern University}
	\and Yuxin Zhou 
	\thanks{Department of Mathematics, yuxinzhou2023@u.northwestern.edu}
	\\ \small{Northwestern University}
}

\maketitle

\footnotetext{MSC2000: Primary 60F10, 82D30.}
\footnotetext{Keywords: Spherical vector $p$-spin, Crisanti-Sommers, Ground State Energy, Parisi's formula, spin glass.}
\begin{abstract}
This paper studies properties of the mixed spherical vector $p$-spin model. At zero temperature, we establish and investigate a Parisi type formula for the ground state energy. At finite temperature, we provide some properties of minimizers of the Crisanti-Sommers formula recently obtained in \cite{Ko}. In particular, we extend some of the one-dimensional Parisi measure results of \cite{1} to the vector case.
\end{abstract}

\section{ The spherical spin model with vector spins}

Spherical spin glass models are one of the main sources of ideas and techniques in the theoretical study of disordered complex systems. These models are simple enough to produce explicit computations while retaining many of the intriguing phenomena of high-dimensional random systems. Their energy landscape provides a metaphor to explain several phenomena in other areas of science, including biology, chemistry, data science, and economy. 

One of these explicit computations was the limiting free energy discovered by Crisanti and Sommers in \cite{2} for the spherical $p$-spin model with one dimensional spins. This formula is the analogue of the classical Parisi formula for the Sherrington-Kirkpatrick model \cite{SK} and it was rigorously  proven for even-$p$-spin models by Talagrand in \cite{6} and extended to general mixed $p$-spin models by Chen in \cite{CASS}. These variational formulas and their minimizers have deep importance to describe and classify the energy landscape of such systems. We refer the readers to \cite{AB, ABC, AJ, ParisiMeasure, 1, replicaonandoff, phasediagram, dualitygroundstate, MPV} and the references therein for results in this direction.

The rigorous study of spherical spin models with vector spins started with the work of Panchenko and Talagrand \cite{PTSPHERE} with the first non-trivial bounds for the free energy.  Recently, Ko\cite{Ko} provided a proof of the limiting free energy for these models and its Crisanti-Sommers analogue \cite{JK1}. These results came after important contribution of Panchenko in the study of vector $p$-spins on the hypercube \cite{4,5,PTSPHERE}. As far as we know, there is no rigorous study on the role and properties of the minimizers of such models. The goal of this paper is to provide the first steps of this study and to analyze the model at zero temperature, extending the results of \cite{Ko} to the ground state energy. Our main results are stated in Sections \ref{sec32} and \ref{Sec4}.

Let us now describe the spherical model with vector spins and state some of its fundamental results. Fix $m\geq 1$ and for $N \geq 1$, let $S_N$ be the sphere in $\mathbb{R}^N$ of radius $\sqrt{N}$. We denote a configuration of the vector spin by 
\begin{equation*}
\vec{\sigma}=(\vec{\sigma}_1,\cdots,\vec{\sigma}_N)\in  S_N^m \text{ where } S_N^m=\{\vec{\sigma} \in (\mathbb{R}^N)^m | \vec{\sigma}(j) \in S_N  \text{ for } j=1,\cdots,m\}.
\end{equation*}
Here the $j$-th coordinate of $\vec{\sigma}$ is denoted by $\vec{\sigma}(j)$ and the vector entries of $\vec{\sigma}$ are denoted by
\begin{equation*}
\vec{\sigma}_i=(\vec{\sigma}_i(1), \cdots, \vec{\sigma}_i(m)) \in \mathbb R^m, \quad 1 \leq i \leq N.
\end{equation*}

For $p \geq 2$, we denote the $p$-spin Hamiltonian of the $j$-th copy by 
\begin{equation*}
H_{N,p}(\vec{\sigma}(j))=\frac{1}{N^{\frac{p-1}{2}}}\sum_{1 \leq i_1, \cdots,i_p \leq N}g_{i_1,\cdots,i_p}\vec{\sigma}_{i_1}(j) \cdots \vec{\sigma}_{i_p}(j),
\end{equation*}
where $g_{i_1,\cdots,i_p}$ are i.i.d. standard Gaussians for all $p \geq 2$ and indices $(i_1, \cdots , i_p)$. The corresponding mixed $p$-spin Hamiltonian for the $j$-th copy at inverse temperatures $(\vec{\beta}_p)_{p \geq 2}$, where $\vec{\beta}_p=(\vec{\beta}_p(j))_{1 \leq j \leq m}$, can be expressed as 
\begin{equation}\label{Hamiltonian}
H^j_N(\vec{\sigma})=\sum_{p\geq 2} \vec{\beta}_p(j) H_{N,p}(\vec{\sigma}(j)).
\end{equation}
Here we only consider mixed even $p-$spin models, $i.e.$ $\beta_p(j)=0$ for all $1 \leq j \leq m$ and odd $p \geq 3$. Moreover, we assume that the inverse temperature of each $j$-th copy satisfy $$\sum_{p \geq 2} 2^p \vec{\beta}_p^2(j) < \infty,$$ so that \eqref{Hamiltonian} is well-defined.

We define the Hamiltonian of $m$ copies mixed p-spin models of spherical spin glasses by 
\begin{equation*}
H_N(\vec{\sigma})=\sum_{j=1}^m H_N^j(\vec{\sigma}).
\end{equation*}
If, for any $1\leq k,\ell \leq m$,  we introduce the function
\begin{equation}\label{eq:xi}
\xi_{k, \ell} (x)= \sum_{p\geq 2} \beta_p(k)\beta_p(\ell) x^p,
\end{equation}
then it is not difficult to check that, for two arbitrarily spin configurations $\vec{\sigma}^1$ and $\vec{\sigma}^2$,
\begin{equation*}
\mathbb{E} \left[ H_N^k(\vec{\sigma}^1(k))H_N^{\ell}(\vec{\sigma}^2(\ell)) \right]=N \xi_{k,\ell}(R^{k,\ell}_{1,2}) \text{ for all } 1 \leq k, \ell \leq m,
\end{equation*}
where $$R^{k,\ell}_{1,2}  = \frac{1}{N}\sum_{j=1}^N \vec{\sigma}^1_j(k)\vec{\sigma}^2_j(\ell)$$ is the overlap between the corresponding coordinates of the vector configurations $\vec{\sigma}^1$ and $\vec{\sigma}^{2}$. The overlap matrix between configurations $\vec {\sigma}^\ell$ and $\vec {\sigma}^{\ell'}$ is expressed as
\begin{equation*}
R_{\ell,\ell'}=R({\vec{\sigma}}^\ell,{\vec{\sigma}}^{\ell'})=(R^{kk'}_{\ell,\ell'})_{1 \leq k,k' \leq m}=\frac{1}{N}\sum_{i=1}^N \vec{\sigma_i}^\ell \otimes \vec{\sigma_i}^{\ell'}
\end{equation*}
where $\otimes$ is the outer product on vectors in $\mathbb{R}^m$

 Let $\mathbb M$ be the space of $m \times m$ semi-symmetric positive definite matrices with entries in $[-1,1]$ and diagonals entries equal to $1$.
We now describe the Crisanti-Sommers formula for the free energy of a system of vector spins with constrained self-overlap $ Q \in \mathbb M$. 
For any positive semidefinite matrix $A=(A_{i,j})_{1 \leq i,j \leq m}$, let 
\begin{equation*}
\xi(A):=\sum_{p \geq 2 }(\vec{\beta}_p \otimes \vec{\beta}_p) \odot A^{\circ p} = (\xi_{i,j}(A_{i,j}))_{1\leq i,j \leq m}
\end{equation*}
where $\xi_{i,j}$ are defined in \eqref{eq:xi} and $A^{\circ p}$ denotes the $p$-th Hadamard power of the matrix $A$ (element-wise multiplication). Clearly, we have the matrices 
\begin{equation*}
\xi'(A)=\sum_{p \geq 2 }p(\vec{\beta}_p \otimes \vec{\beta}_p) \odot A^{\circ (p-1)}  \text{ and }  \xi''(A)=\sum_{p \geq 2 }p(p-1)(\vec{\beta}_p \otimes \vec{\beta}_p) \odot A^{\circ (p-2)}.
\end{equation*}

Given any $\epsilon > 0$ and $Q\in \mathbb M$, denote the set of spins with constrained self overlaps by 
\begin{equation*}
 \mathcal Q_N^{\epsilon}= \left \{ \vec{\sigma} \in S^m_N \bigg | \; \| R(\vec{\sigma},\vec{\sigma}) - Q \|_{\infty} \leq \epsilon \right\},
\end{equation*}
where $\| A \|_{\infty}= \sup_{1 \leq i,j \leq m} |A_{ij}|$, for any $m \times m$ matrix $A$.
For an external field $\vec{h} \in \mathbb{R}^m$ and any $\beta >0$, we define the free energy as 
\begin{equation*}
F_N^{\epsilon,Q}(\beta)=\frac{1}{N} \mathbb{E} \log \int_{\mathcal Q_N^{\epsilon}} \exp \beta \left(H_N(\vec{\sigma})+\sum_{j=1}^m \vec{h}(j) \sum_{i=1}^N \vec{\sigma}_i (j)\right) d \lambda^n_N(\vec{\sigma}),
\end{equation*}
where the reference measure $\lambda^m_N=\lambda_N^{\otimes m}$ is the product of Haar measures $\lambda_N$ on $S_N$ with normalization $\lambda_N(S_N)=1$. Here the parameter $\beta$ is the so-called inverse temperature.
Moreover, denote $\xi_\beta=\beta^2 \xi$ and $\vec{h}_\beta=\beta \vec{h}$.

 For a measurable function $f:S_N^m\times S_N^m \to \mathbb R$ we also set 
\begin{equation}\label{eq:GM}
\langle f \rangle_{\epsilon, Q}=\frac{\int_{(\mathcal Q_N^{\epsilon})^2} f(\vec{\sigma}^1,\vec{\sigma}^2)\exp(H_N({\vec{\sigma}^1)}+H_N({\vec{\sigma}^2)}) d \lambda^m_N({\vec{\sigma}}^1)d \lambda^m_N({\vec{\sigma}}^2)}{\int_{(\mathcal Q_N^{\epsilon})^2} \exp(H_N({\vec{\sigma}^1)}+H_N({\vec{\sigma}^2)) d \lambda^m_N({\vec{\sigma}}^1)d \lambda^m_N({\vec{\sigma}}^2)}}.
\end{equation}
For  a matrix-valued function $A(\vec{\sigma}^1,\vec{\sigma}^2)=(a_{ij}(\vec{\sigma}^1,\vec{\sigma}^2))_{1 \leq i,j \leq m}$, we let $\langle A(\vec{\sigma}^1,\vec{\sigma}^2) \rangle_{\epsilon, Q}$ denote the matrix $(\langle a_{ij}(\vec{\sigma}^1,\vec{\sigma}^2) \rangle_{\epsilon, Q})_{1 \leq i,j \leq n}$.
Observe that for any $f$ continuous, the map 
\begin{equation}\label{cont:contin}
Q \mapsto \langle f \rangle_{\epsilon, Q}
\end{equation}
is continuous on $\mathbb (\mathbb M, \| \cdot \|_{\infty})$.

We will now recall the formula for the free energy obtained in \cite{JK1}. Denote a right-continuous non-decreasing function by 
\begin{equation}\label{eq:xpath}
x(t):[0,m] \rightarrow [0,1] \text{ such that } x(0)=0 \text{ and } x(m)=1 
\end{equation}
and a $1$-Lipschitz monotone matrix path in the space of $m \times m$ positive semidefinite matrices by 
\begin{equation}\label{eq:ppath}
\Psi(t):[0,m] \rightarrow \mathbb{S}^m_+ \text{ such that } \text{Trace}(\Psi(t))=t  \text{ and }  \Psi(0)=0 \text{ and } \Psi(m)=Q.
\end{equation}
where $\mathbb{S}_{+}^m$ is the space of $m \times m$ positive semidefinite matrices.

 Set 
\begin{equation*}
t_x:=x^{-1}(1)=\inf\{t \in [0,m]|x(t) =1\}
\end{equation*}
and
\begin{equation*} 
s_x:=x^{-1}(0)=\sup\{t \in [0,m]|x(t) =0\}.
\end{equation*}

 Assuming $t_x < m$, then $\forall t_x < \hat{T} < m$, we can define the quantity 
 \begin{equation}\label{CSF}
 \mathscr{C}_{\beta,Q}(x,\Phi)=\frac{1}{2} \left[ \int_0^m x(t) \big\langle \xi'_\beta(\Phi(t))+\vec{h}_\beta \vec{h}_\beta^T,\Phi'(t) \big\rangle dt+\log|{\Phi(m)-\Phi(\hat{T})}|+\int^{\hat{T}}_0 \big \langle \hat{\Phi}(t)^{-1},\Phi'(t) \big \rangle dt\right] 
 \end{equation}
where $\hat{\Phi}(t):[0,m] \rightarrow \mathbb{R}^{m \times m}$ is a decreasing matrix path given by 
\begin{center}
$\hat{\Phi}(t)=\int^m_t x(s)\Phi'(s)ds$.
\end{center}

Furthermore, for any $\Lambda \in \mathbb{S}^m_+$ satisfying $\Lambda > \int^m_0 x(s) \xi_\beta''(\Phi(s)) \odot \Phi'(s) ds $,we have a continuous form of the Parisi formula as follows:
\begin{eqnarray}\label{eq:contPAr}
\mathscr{P}_{\beta,Q}(x,\Lambda, \Phi)=\frac{1}{2} \bigg[ \int^m_0 \langle \xi_\beta''(\Phi(q)) \odot \Phi'(q), (\Lambda-D^x_{\beta,Q}(q))^{-1} \rangle dq+ \langle \vec{h}_\beta\vec{h}_\beta^T , ( \Lambda-D^x_{\beta,Q}(0))^{-1} \rangle \nonumber \\
- \int^m_0 x(q) \langle \xi_\beta''(\Phi(q))\odot \Phi(q),\Phi'(q) \rangle dq  + \langle \Lambda , Q \rangle -m - \log | \Lambda | \bigg]
\end{eqnarray}
where $D^x_{\beta,Q}(q):= \int^m_q x(s) \xi_\beta''(\Phi(s)) \odot \Phi'(s) ds$.

Denote $\mathscr{M}_0$ the collection of all 1-Lipschitz monotone matrix paths that satisfy \eqref{eq:ppath} and $\mathscr{M}$ the collection of all continuously differentiable Lipschitz monotone matrix path with Lipschitz derivatives that satisfies \eqref{eq:ppath} without satisfying the assumption that Trace$(\Phi(t))=t$, but satisfying the assumption that for any $t \in$ supp $\mu$, Trace$(\Phi(t))=t$.

Similarly, denote by $\mathscr{N}_0$ the collection of all nonnegative nondecreasing and right-continuous functions on $[0,m)$ and $\mathscr{N}$ the collection of all nonnegative nondecreasing and right-continuous functions on $[0,m)$ satisfying the assumption that there exists $0<c<m$ such that $\alpha(t)$ is constant for all $t \in [c,m]$.

We will also need a discrete version of \eqref{CSF} and \eqref{eq:contPAr} that we describe now. 
Consider a discrete monotone matrix path encoded by an increasing sequence of real numbers and a monotone sequence of $n \times n$ symmetric positive semidefinite matrices,\begin{eqnarray*}
0=x_0 \leq x_1 \leq \ldots \leq x_{r-2} \leq x_{r-1} \leq 1, \\
0=Q_0 \leq Q_1 \leq \ldots \leq Q_{r-2} \leq Q_{r-1} \leq Q_r =Q,
\end{eqnarray*}
where $r \geq 1$. We denote $\b{x}=(x_k)^{r-1}_{k=0}$ and $\b{Q}=(Q_k)^r_{k=1}$. The discrete Crisanti-Sommers formula is given by
\begin{eqnarray}\label{eq:discreteCS}
\mathscr{C}_r(\b{x},\b{Q})=\frac{1}{2} [\langle \vec{h}\vec{h}^T, D_1 \rangle + \frac{1}{x_{r-1}} \log |Q-Q_{r-1}| - \Sigma_{1 \leq k \leq r-2} \frac{1}{x_k} \log\frac{|D_{k+1}|}{|D_k|}  \nonumber \\
+ \langle Q_1,D_1^{-1} \rangle +\Sigma_{1 \leq k \leq r-1} x_k \cdot \text{Sum}(\xi(Q_{k+1})-\xi(Q_k)) ],
\end{eqnarray}
where $D_p=\Sigma_{p \leq k \leq r-1} x_k (Q_{k+1}-Q_k)$ for $1 \leq p \leq r-1$, and 
\[ \text{Sum}(A) = \sum_{1\leq i,j \leq m} A_{ij}.
\]
and the discrete Parisi formula is given by
\begin{eqnarray}\label{eq:discreteParisi}
\mathscr{C}_r(\Lambda,\b{x},\b{Q})&=&\frac{1}{2}[\langle \vec{h}\vec{h}^T, \Lambda^{-1} \rangle + \langle \Lambda,Q \rangle -m - \log|\Lambda| +\sum_{1 \leq k \leq r-1} \log \frac{|\Lambda_{k+1}|}{|\Lambda_k|}+ \langle \xi'(Q_1), \Lambda_1^{-1} \rangle \nonumber\\
& -& \sum_{1 \leq k \leq r-1} x_k \cdot \text{Sum}(\theta(Q_{k+1})-\theta(Q_k))]
\end{eqnarray}
where $\Lambda_r=\Lambda, \Lambda_p=\Lambda-\sum_{p \leq k \leq r-1} x_k (\xi'(Q_{k+1}) -\xi'(Q_k))$ for $1 \leq p \leq r-1.$

The limit of the free energy with self overlaps constrained to $Q$ can be expressed as
\begin{theorem}[\cite{JK1}, Theorems 1-3, Proposition 1]\label{eq:optimizers}
The limit of the free energy with self overlaps constrained to $Q$ is 
\begin{equation}\label{eq:CSFormula} 
\lim_{\epsilon \rightarrow 0} \lim_{N \rightarrow \infty} F^{\epsilon,Q}_N(\beta) = \inf_{r,\b{x},\b{Q}} \mathscr{C}_r (\b{x},\b{Q})= \inf_{x,\Phi \in \mathscr{N}_0 \times \mathscr{M}_0} \mathscr{C}_{\beta,Q} (x,\Phi)= \inf_{x,\Phi \in \mathscr{N}_0 \times \mathscr{M}_0} \mathscr{P}_{\beta,Q} (x,\Lambda,\Phi).
\end{equation}
The last two infimums are over $x(t)$ and $\Phi(t)$ defined in \eqref{eq:xpath} and \eqref{eq:ppath} such that $|Q-\Phi(t_x)|>0$ and they are both attained.
\end{theorem}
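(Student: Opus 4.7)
The plan is to prove the three equalities of \eqref{eq:CSFormula} in sequence, with the first (limit of free energy equals discrete Crisanti-Sommers infimum) carrying the substantive analytic content and the remaining two being reformulations. For the first equality, I would follow the now-standard two-sided scheme: an upper bound via a Guerra-type interpolation adapted to the vector spherical setting, and a lower bound via an Aizenman-Sims-Starr cavity argument. In the upper bound one constructs an interpolating Hamiltonian parameterized by a discrete sequence $(\boldsymbol{x},\boldsymbol{Q})$ whose endpoints are $H_N$ and a decoupled cavity reference system driven by independent Gaussian fields; computing the interpolation derivative and showing it has the correct sign requires the matrix-valued overlap synchronization developed by Panchenko in \cite{4,5,PTSPHERE}, which states that under the Ghirlanda-Guerra identities the $m \times m$ overlap matrix is essentially a monotone function of a single scalar parameter. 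The lower bound is obtained through a cavity computation in which the restriction to $\mathcal{Q}_N^\epsilon$ enforces the self-overlap concentration that is automatic in the scalar spherical case; the continuity \eqref{cont:contin} then allows the passage $\epsilon \to 0$.

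For the second equality, $\inf \mathscr{C}_r = \inf \mathscr{C}_{\beta,Q}$, I would argue by approximation in both directions. A discrete sequence $(\boldsymbol{x},\boldsymbol{Q})$ naturally encodes a step function $x$ and a piecewise linear matrix path $\Phi$, which can be mollified into an element of $\mathscr{M}_0$ with arbitrarily small change in the value of $\mathscr{C}_{\beta,Q}$, provided $\hat{\Phi}$ stays uniformly positive definite on $[0,\hat{T}]$. Conversely, any continuous pair $(x,\Phi) \in \mathscr{N}_0 \times \mathscr{M}_0$ is approximated by dyadic discretizations, and continuity of $\mathscr{C}_{\beta,Q}$ along such sequences follows from the Lipschitz bounds on $\Phi$ together with uniform control of the inverse $\hat{\Phi}^{-1}$.

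The third equality, $\inf \mathscr{C}_{\beta,Q} = \inf \mathscr{P}_{\beta,Q}$, is a Legendre-type duality in the variable $\Lambda$. On the admissible cone $\{\Lambda : \Lambda > \int_0^m x(s)\, \xi_\beta''(\Phi(s)) \odot \Phi'(s)\, ds\}$ the map $\Lambda \mapsto \mathscr{P}_{\beta,Q}(x,\Lambda,\Phi)$ is strictly convex with a unique critical point; writing the first-order condition, solving for the optimal $\Lambda^{*}$ in terms of $(x,\Phi)$, and substituting back yields, after an integration by parts in the $\int x \langle \xi_\beta'(\Phi),\Phi' \rangle$ term, exactly the expression $\mathscr{C}_{\beta,Q}(x,\Phi)$. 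For attainment, admissible $x$ form a compact set under pointwise convergence at continuity points (they are distribution functions on $[0,m]$ valued in $[0,1]$), and admissible $\Phi$ form a compact subset of $C([0,m];\mathbb{S}^m_+)$ by Arzel\`a--Ascoli; lower semicontinuity of $\mathscr{C}_{\beta,Q}$ under the non-degeneracy condition $|Q-\Phi(t_x)|>0$ then yields existence of minimizers.

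The hardest part I expect is the synchronization argument in Step~1. In scalar spherical models the overlap is a single number and the Parisi order parameter is a distribution on $[0,1]$; in the vector case the overlap is an $m \times m$ matrix, and only after invoking Panchenko's synchronization does one see that this matrix concentrates on a one-parameter family, so that the ansatz reduces to a scalar function $x$ together with a matrix path $\Phi$. Without this reduction the discrete Crisanti-Sommers form \eqref{eq:discreteCS} would not even be the correct variational object, and the interpolation derivative in Guerra's scheme would not admit the sign control needed for the upper bound.
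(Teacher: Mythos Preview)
The paper does not prove this theorem at all: it is stated with the attribution ``\cite{JK1}, Theorems 1--3, Proposition 1'' and is simply imported from Ko's work as background for the results that follow. There is therefore no proof in the present paper to compare your proposal against.

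That said, your outline is broadly in the spirit of how the cited references establish the result, with one structural difference worth noting. In \cite{Ko,JK1} the route is: first prove the Parisi formula $\lim F_N^{\epsilon,Q} = \inf \mathscr{P}_{\beta,Q}$ for the constrained spherical system (this is \cite{Ko}, using Guerra interpolation for the upper bound and the Aizenman--Sims--Starr scheme together with Panchenko's synchronization for the lower bound), and only afterwards show the algebraic equivalence $\inf \mathscr{P}_{\beta,Q} = \inf \mathscr{C}_{\beta,Q}$ via the Legendre-type computation you describe (this is \cite{JK1}). Your proposal inverts the order, aiming directly at the Crisanti--Sommers functional in Step~1 and deducing the Parisi form afterward. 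This is not wrong in principle, but the Guerra interpolation is naturally expressed in terms of $\mathscr{P}$ rather than $\mathscr{C}$ (the remainder term involves $\theta = q\xi'(q)-\xi(q)$, which is what appears in the discrete Parisi functional \eqref{eq:discreteParisi}), so in practice one proves the Parisi version first. Your identification of synchronization as the crux of the lower bound is correct.
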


\section{The Parisi formula at zero temperature}\label{Sec3}

We first modify the setting of the Theorem \ref{eq:optimizers} to achieve convergence of the minimizer for the ground state energy. To be more specific, we claim that any matrix path minimizer of $\mathscr{C}$ lies in $ \mathscr{M}$. Moreover, we claim that any discrete path corresponds to a matrix path in $\mathscr{M}$, hence 
\begin{equation}\label{eq:discrete}
 \inf_{r,\b{x},\b{Q}} \mathscr{C}_r (\b{x},\b{Q}) \geq  \inf_{x,\Phi \in \mathscr{N_0} \times \mathscr{M}} \mathscr{C}_{\beta,Q} (x,\Phi).
 \end{equation}
 We leave the proof of this claim to next section. Moreover, as \[ \inf_{r,\b{x},\b{Q}} \mathscr{C}_r (\b{x},\b{Q}) \leq \inf_{x,\Phi \in \mathscr{N}_0 \times \mathscr{M}_0} \mathscr{C}_{\beta,Q} (x,\Phi) \text{ and }  \inf_{x,\Phi \in \mathscr{N} \times \mathscr{M}} \mathscr{C}_{\beta,Q} (x,\Phi) \geq \inf_{x,\Phi \in \mathscr{N}_0 \times \mathscr{M}_0} \mathscr{C}_{\beta,Q} (x,\Phi),
\]
we obtain that $ \inf_{r,\b{x},\b{Q}} \mathscr{C}_r (\b{x},\b{Q}) \leq \inf_{x,\Phi \in \mathscr{N} \times \mathscr{M}} \mathscr{C}_{\beta,Q} (x,\Phi)$ and therefore 
\[
\inf_{r,\b{x},\b{Q}} \mathscr{C}_r (\b{x},\b{Q}) = \inf_{x,\Phi \in \mathscr{N} \times \mathscr{M}} \mathscr{C}_{\beta,Q} (x,\Phi).
\]

Since any path $\Phi$ in $\mathscr{M}$ is continuously differentiable and $\Phi'$ is Lipschitz and uniformly bounded, then by Arzela-Ascoli theorem, for any $\beta \geq 0$, the minimizer $\Phi_{\beta,Q}$ is also a continuously differentiable function with Lipschitz and uniformly bounded derivative. Similarly, for any $\beta > 0$, we get a subsequence $\{\beta_n\}_{n \geq 0}$ such that $\{ \Phi_{\beta_n,Q} \}$ converges to a continuously differentiable path $\Phi_0$.

The following result explains the role of the minimizers of the Crisanti-Sommers formula in the case $n\geq 2$. The proof is deferred to the end of this subsection.
\begin{theorem}\label{thm:thm3}
 Assume that the pair $(x, \Phi)$ is a minimizer of the Crisanti-Sommers formula \eqref{eq:CSFormula} and write $\mu_{P}([0,q])=x(q)$. For any $F=(F_{i,j})_{1 \leq i,j \leq m}:\mathbb{R}^{m \times m} \rightarrow \mathbb{R}^{m \times m}$ continuous and bounded,
 \begin{equation*}
 \lim_{N \rightarrow \infty}\lim_{\epsilon \rightarrow 0}  \mathbb{E} \langle F(R_{1,2}) \rangle_{\epsilon, Q}= \int^m_0 F \circ \Phi(t)  d \mu_P(t).
 \end{equation*}
\end{theorem}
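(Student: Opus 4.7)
The goal is to identify the limiting joint law of the matrix $R_{1,2}$ under the constrained Gibbs measure as the push-forward of $\mu_P$ under the map $t\mapsto\Phi(t)$. By Stone--Weierstrass and the uniform bound $|R^{k,\ell}_{1,2}|\leq 1$, it suffices to verify the identity for $F$ a polynomial in the matrix entries of $R_{1,2}$; by polarization, each such polynomial reduces to bilinear combinations of monomials of the form $\sum_{k,\ell}\gamma_k\gamma_\ell(R^{k,\ell}_{1,2})^p$.

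The main tool is a Guerra-type Gaussian interpolation combined with an envelope argument applied to the Crisanti--Sommers variational principle. Fix an even integer $p\geq 2$ and a vector $\vec\gamma\in\mathbb R^m$, and consider the perturbed covariance $\xi^u_\beta:=\xi_\beta+u(\vec\gamma\otimes\vec\gamma)\odot(\cdot)^{\circ p}$. For $u\geq 0$ small, this corresponds to adding an independent $p$-spin Gaussian Hamiltonian to $H_N$, and Theorem~\ref{eq:optimizers} applied to the perturbed model identifies $\psi(u):=\lim_{\epsilon\to 0}\lim_{N\to\infty} F_N^{\epsilon,Q}(\beta,u)$ with $\inf_{x,\Phi}\mathscr{C}^u_{\beta,Q}(x,\Phi)$. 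Guerra's integration by parts in finite volume gives
\[
\frac{d}{du}F_N^{\epsilon,Q}(\beta,u)=\frac{1}{2}\mathbb E\bigl\langle\text{Sum}\bigl((\vec\gamma\otimes\vec\gamma)\odot R(\vec\sigma,\vec\sigma)^{\circ p}\bigr)-\text{Sum}\bigl((\vec\gamma\otimes\vec\gamma)\odot R_{1,2}^{\circ p}\bigr)\bigr\rangle_{\epsilon,Q},
\]
and the self-overlap constraint pushes the first summand to $\text{Sum}((\vec\gamma\otimes\vec\gamma)\odot Q^{\circ p})$ as $\epsilon\to 0$. On the variational side, using $\langle p(\vec\gamma\otimes\vec\gamma)\odot\Phi(t)^{\circ(p-1)},\Phi'(t)\rangle=\frac{d}{dt}\text{Sum}((\vec\gamma\otimes\vec\gamma)\odot\Phi(t)^{\circ p})$ together with a Stieltjes integration by parts against $dx=d\mu_P$ yields
\[
\frac{d}{du^+}\mathscr{C}^u_{\beta,Q}(x,\Phi)\Big|_{u=0}=\frac{1}{2}\Big[\text{Sum}\bigl((\vec\gamma\otimes\vec\gamma)\odot Q^{\circ p}\bigr)-\int_0^m \text{Sum}\bigl((\vec\gamma\otimes\vec\gamma)\odot\Phi(t)^{\circ p}\bigr)d\mu_P(t)\Big]
\]
at any minimizer $(x,\Phi)$ of the unperturbed functional. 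Equating the two expressions, varying $\vec\gamma$ and polarizing produces
\[
\lim_{N\to\infty}\lim_{\epsilon\to 0}\mathbb E\bigl\langle(R^{k,\ell}_{1,2})^p\bigr\rangle_{\epsilon,Q}=\int_0^m\Phi_{k,\ell}(t)^p\,d\mu_P(t)
\]
for every $(k,\ell)$ and every even $p\geq 2$, identifying each marginal of the limit distribution of $R_{1,2}$.

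To upgrade the marginal identities to joint moments (including mixed products such as $R^{k_1,\ell_1}_{1,2}R^{k_2,\ell_2}_{1,2}$ and monomials with odd exponents), the plan is to add to $H_N$ a Ghirlanda--Guerra perturbation of vanishing strength in $N$ compatible with the self-overlap constraint, following Panchenko~\cite{4,5}. This does not alter the limit $\psi$ but forces the limiting overlap array to satisfy the vector Ghirlanda--Guerra identities, which combined with matrix synchronization produce a scalar master overlap $T\in[0,m]$ and a deterministic monotone matrix path $\tilde\Phi$ such that $R_{1,2}\to\tilde\Phi(T)$ in distribution. The marginal identification from the previous paragraph then forces the law of $T$ to be $\mu_P$ and $\tilde\Phi=\Phi$; a density argument finally extends the identity to all continuous bounded matrix-valued $F$.

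The main obstacle is the envelope passage from the pre-limit Guerra identity to differentiation under the infimum defining $\psi$. As the infimum of the family $u\mapsto\mathscr{C}^u_{\beta,Q}(x,\Phi)$ of affine functions, $\psi$ is concave in $u$ and hence differentiable off a countable set, with $\psi'(u)=\partial_u\mathscr{C}^u_{\beta,Q}|_{(x_u,\Phi_u)}$ for any minimizer $(x_u,\Phi_u)$ at such $u$. The identity at $u=0$ then follows by continuity of minimizers under perturbation (via the Arzel\`a--Ascoli argument of Section~\ref{Sec3}), once the pre-limit derivative in $u$ is shown to converge to $\psi'(u)$, a standard consequence of the concavity in $u$ and uniform Lipschitz bounds. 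The secondary difficulty is adapting the Ghirlanda--Guerra construction to the self-overlap constrained setting without spoiling either the limiting free energy or the applicability of Theorem~\ref{eq:optimizers}, which is a routine but technical extension of Panchenko's arguments.
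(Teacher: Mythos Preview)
Your core strategy---perturb the covariance, differentiate both the finite-volume free energy and the Crisanti--Sommers functional, and use convexity to pass the derivative through the limit---is exactly the paper's. The paper implements it a bit more directly: instead of introducing an auxiliary parameter $u$ and an independent Hamiltonian, it perturbs the existing inverse temperatures $\vec\beta_p\mapsto\vec\beta_p+t\,\vec c_p$ and invokes convexity of $F_N^{\epsilon,Q}$ in each $\beta_p(i)$ (by H\"older) together with convexity and differentiability of $\mathscr C$ in $\beta_p(i)$ to match derivatives. This sidesteps your envelope-theorem detour through concavity in $u$ and continuity of minimizers, and it gets the same endpoint identity $\lim\mathbb E\langle R_{1,2}^{\circ p}\rangle_{\epsilon,Q}=\int_0^m\Phi(t)^{\circ p}\,d\mu_P(t)$ for all even $p$.

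Where you go beyond the paper is in the passage from these entrywise moments to the full joint law of $R_{1,2}$. The paper simply writes ``since the even polynomials are dense on $C[0,1]$'' and stops; strictly read, that density argument identifies the law of each scalar $R^{k,\ell}_{1,2}$ separately but not mixed moments such as $\mathbb E\langle R^{1,1}_{1,2}R^{1,2}_{1,2}\rangle$. Your Ghirlanda--Guerra plus synchronization step is the natural way to close that gap in the vector-spin setting and is genuinely additional work relative to what the paper records. One small correction: your opening polarization claim---that general polynomials in the matrix entries reduce to bilinear combinations of $\sum_{k,\ell}\gamma_k\gamma_\ell(R^{k,\ell}_{1,2})^p$---is not right, since polarization in $\vec\gamma$ only recovers the individual $(R^{k,\ell}_{1,2})^p$, not products across distinct entries; the synchronization argument you outline later is therefore essential, not optional.
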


\begin{remark}[Uniqueness of the Parisi pair $(x, \Phi)$] \label{Rem:aloha} By choosing $F(X)=(\text{trace} (X))_{1 \leq i,j \leq m}$ and using the fact that $\text{Trace}(\Phi(t))=t$, one can see that the minimizing measure $\mu_P$ is unique. Similarly, for any $t \in \text{ supp} \mu_P$, the value of $\Phi(t)$ is also unique. However, for any $t \notin \text{ supp } \mu_P,$ we can modify $\Phi(t)$ arbitrarily as $\Phi(t)$ will not change the corresponding value of $\mathscr{C}(x,\Phi)$.
\end{remark}
 
Fix $\beta >0$. Let $(x_{\beta,Q},\Phi_{\beta,Q})$ be an optimizer of \eqref{eq:CSFormula} in $\mathscr{N}_0 \times \mathscr{M}_0$. The following lemma shows that there exists $Q_\beta \in \mathbb M$ such that $\mathscr{C}_{\beta,Q_\beta}(x_{\beta,Q_\beta},\Phi_{\beta,Q_\beta})=\sup_{Q \in \mathbb M} \mathscr{C}_{\beta,Q}(x_{\beta,Q},\Phi_{\beta,Q})$. For $Q\in \mathbb M$, let 

\[
\mathscr{C}(Q) = \inf_{{x},\Phi} \mathscr{C}_{\beta,Q}(x,\Phi) =  \mathscr{C}_{\beta,Q}(x_{\beta,Q},\Phi_{\beta,Q}).
\]

\begin{lemma}\label{lem:adasdsad}
For any $\beta>0$, the map $Q \mapsto \mathscr{C}(Q)$ is continuous. Furthermore,  there exists $Q_\beta \in \mathbb M$ such that 
\[
\mathscr{C}(Q_{\beta})=\sup_{Q\in \mathbb M} \mathscr{C}(Q).
\] 
\end{lemma}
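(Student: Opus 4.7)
The plan is to establish continuity of $Q \mapsto \mathscr{C}(Q)$ on $\mathbb{M}$ and then invoke compactness. The set $\mathbb{M}\subset\mathbb{R}^{m\times m}$ is closed (positive semidefiniteness, symmetry and the fixed-diagonal constraint are all closed conditions) and bounded (entries in $[-1,1]$), so it is compact with respect to $\|\cdot\|_\infty$. Once continuity is in hand, a continuous real-valued function on a compact set attains its supremum, which gives the existence of $Q_\beta$.

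For continuity, I would prove matching $\liminf$ and $\limsup$ inequalities for an arbitrary sequence $Q_n \to Q$ in $\mathbb{M}$. For the $\liminf$ side, pick optimizers $(x_n,\Phi_n)\in\mathscr{N}_0\times\mathscr{M}_0$ realizing $\mathscr{C}(Q_n)$. The family $\{\Phi_n\}$ is $1$-Lipschitz and uniformly bounded, and the family $\{x_n\}$ consists of nondecreasing $[0,1]$-valued functions, so Arzelà--Ascoli together with Helly's selection theorem yield a subsequence with $\Phi_n \to \Phi_\star$ uniformly on $[0,m]$ and $x_n \to x_\star$ at every continuity point of $x_\star$. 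Because $\Phi_n(m)=Q_n \to Q$, the limit satisfies $\Phi_\star(m)=Q$, so $(x_\star,\Phi_\star)$ is admissible for $Q$. Plugging $(x_n,\Phi_n)$ into \eqref{CSF} with a fixed cutoff $\hat{T}\in(t_{x_\star},m)$ chosen so that $\Phi_\star(m)-\Phi_\star(\hat{T})$ is nonsingular (use the freedom that $\mathscr{C}_{\beta,Q}$ does not depend on the choice of $\hat{T}$), then passing to the limit by dominated convergence, gives $\liminf_n \mathscr{C}(Q_n) \geq \mathscr{C}_{\beta,Q}(x_\star,\Phi_\star) \geq \mathscr{C}(Q)$.

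For the $\limsup$ side, fix an optimizer $(x,\Phi)$ for $\mathscr{C}(Q)$ and build competitors $(x,\Phi_n)$ for $Q_n$ by a near-terminal perturbation. Choose a small $\eta>0$ and a smooth cutoff $\chi_\eta:[0,m]\to[0,1]$ supported in $[m-\eta,m]$ with $\chi_\eta(m)=1$ and $\|\chi_\eta'\|_\infty\leq C/\eta$. Define
\[
\Phi_n(t)=\Phi(t)+\chi_\eta(t)\bigl(Q_n-Q\bigr).
\]
For $n$ large enough (depending on $\eta$), $\Phi_n$ is still Lipschitz with derivatives close to those of $\Phi$ (hence monotone), positive semidefinite, and satisfies $\Phi_n(m)=Q_n$. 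Evaluating \eqref{CSF} at $(x,\Phi_n)$, choosing $\hat{T}\in(t_x,m-\eta)$ so that the perturbation leaves the $\hat\Phi^{-1}$ term untouched, and using continuity of the remaining $Q$-dependent terms, I obtain $\mathscr{C}(Q_n)\leq \mathscr{C}_{\beta,Q_n}(x,\Phi_n)\to \mathscr{C}(Q)$ after letting $n\to\infty$ then $\eta\to 0$.

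The main obstacle is the singularity of the $\log|\Phi(m)-\Phi(\hat{T})|$ and $\langle\hat\Phi(t)^{-1},\Phi'(t)\rangle$ terms in \eqref{CSF}: if $Q$ or its limiting path approaches the boundary of $\mathbb{S}_+^m$, these terms can diverge and continuity of the variational functional is not automatic. The plan to circumvent this is to exploit the $\hat{T}$-independence of the formula to freeze $\hat{T}$ away from $m$ in both constructions, so that all the terms being passed to the limit depend only on the uniformly convergent parts of $\Phi_n$ and on bounded quantities, and to localize the terminal perturbation in $[m-\eta,m]$ so that it does not interact with the $\hat\Phi^{-1}$ integral on $[0,\hat{T}]$.
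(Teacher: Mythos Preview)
Your route is quite different from the paper's. The paper never manipulates the functional \eqref{CSF} directly to prove continuity; instead it leans on Theorem~\ref{thm:thm3}, which identifies $\int_0^m F\circ\Phi\,d\mu_P$ with the thermodynamic limit of the Gibbs average $\mathbb{E}\langle F(R_{1,2})\rangle_{\epsilon,Q}$, together with the continuity \eqref{cont:contin} of those Gibbs averages in $Q$. These two ingredients yield $\int F\circ\Phi_n\,d\mu_n\to\int F\circ\Phi\,d\mu$ for every bounded $F$, from which the paper extracts enough convergence of $(\mu_n,\Phi_n)$ on $\mathrm{supp}\,\mu$ to conclude $\mathscr{C}(Q_n)\to\mathscr{C}(Q)$. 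Your direct compactness/competitor argument is more self-contained and would avoid invoking the Gibbs measure altogether, which is appealing, but as written it has a real gap.

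The problem is the $\limsup$ competitor. You set $\Phi_n(t)=\Phi(t)+\chi_\eta(t)(Q_n-Q)$ and assert that for $n$ large ``$\Phi_n$ is still \ldots\ (hence monotone)''. But $\Phi'_n(t)=\Phi'(t)+\chi'_\eta(t)(Q_n-Q)$, the matrix $Q_n-Q$ is generically \emph{indefinite} (it is symmetric with zero diagonal), and $|\chi'_\eta|\sim C/\eta$. For $\Phi'_n(t)\ge 0$ you would need the smallest eigenvalue of $\Phi'(t)$ on $[m-\eta,m]$ to dominate $C\|Q_n-Q\|_\infty/\eta$; yet nothing forces the optimizer's derivative to be strictly positive definite there---it is only positive semidefinite with trace one and may well be rank one on the entire terminal segment. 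In that case $\Phi_n$ fails to be a monotone path for every $n$, however small $\|Q_n-Q\|$, so $(x,\Phi_n)\notin\mathscr{N}_0\times\mathscr{M}_0$ and the competitor is inadmissible. A repair would require first replacing $\Phi$ on $[m-\eta,m]$ by a path with \emph{strictly} positive definite slope without changing the value of $\mathscr{C}_{\beta,Q}$, which in turn needs $Q-\Phi(m-\eta)>0$ strictly; this is not automatic when $Q$ or the optimal path touches $\partial\mathbb{S}^m_+$. The $\liminf$ side has a softer but similar loose end: Arzel\`a--Ascoli gives uniform convergence of $\Phi_n$ but not of $\Phi'_n$, and you have no uniform-in-$n$ lower bound on $\hat\Phi_n(t)$, so the appeal to ``dominated convergence'' in the term $\int_0^{\hat T}\langle\hat\Phi_n(t)^{-1},\Phi'_n(t)\rangle\,dt$ is not yet justified.
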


\begin{proof}
We start by proving continuity of $\mathscr{C}$. It suffices to show that for any sequence $\{Q_n\} \in \mathbb M$ converging to $Q \in \mathbb M$, $\{ \mathscr{C}(x_n,\Phi_n) \}$ converges to $\mathscr{C}(x,\Phi)$, where $(x_n,\Phi_n)$ and $(x,\Phi)$ are minimizers of \eqref{eq:CSFormula} with constraints $Q_n$ and $Q$, respectively.

Let $F=(F_{i,j})_{1 \leq i,j \leq m}:\mathbb{R}^{m \times m} \rightarrow \mathbb{R}^{m \times m}$ be a continuous and bounded function. Since $Q_n \rightarrow Q$ as $n \rightarrow \infty$, we obtain from \eqref{cont:contin},
\begin{equation*}
\lim_{n \rightarrow \infty} \lim_{N \rightarrow \infty} \lim_{\epsilon \rightarrow 0} \mathbb{E} \langle F(R_{1,2}) \rangle_{\epsilon,Q_{n}}=\lim_{N \rightarrow \infty} \lim_{\epsilon \rightarrow 0} \mathbb{E} \langle F(R_{1,2}) \rangle_{\epsilon,Q}.
\end{equation*}

On the other hand, Theorem \ref{thm:thm3} implies
\begin{equation*}
\lim_{N \rightarrow \infty} \lim_{\epsilon \rightarrow 0} \mathbb{E} \langle F(R_{1,2}) \rangle _{\epsilon,Q}=\int^m_0 F \circ \Phi(t) d\mu(t)
\end{equation*}
and \begin{equation*}
\lim_{N \rightarrow \infty} \lim_{\epsilon \rightarrow 0} \mathbb{E} \langle F(R_{1,2}) \rangle _{\epsilon,Q_{n}}=\int^m_0 F \circ \Phi_n(t) d\mu_n(t).
\end{equation*}

Combining the above displays we obtain
\begin{equation}\label{eq:Faaa}
\lim_{n \rightarrow \infty} \int^m_0 F \circ \Phi_n(t) d\mu_n(t)=\int^m_0 F \circ \Phi(t) d\mu(t).
\end{equation}

Now fix $t_0 \in \text{supp }\mu$ and set $A_0:=\Phi(t_0)$. Applying \eqref{eq:Faaa}, with $F:\mathbb{R}^{m \times m} \rightarrow \mathbb{R}^{m\times m}$ given by 
\[
F(X)=\mathbbm{1}_{\{X=A_0\}}:=( \mathbbm{1}_{\{X=A_0\}})_{1 \leq i,j \leq m},
\] 
we obtain that
\begin{equation}\label{eq:kayak}
\lim_{n \rightarrow \infty} \int^m_0 \mathbbm{1}_{ \{ \Phi_n(t)=\Phi(t_0) \} } d \mu_n(t) = (\mu(\{ t_0 \})).
\end{equation}
Since $t_{0} \in \text{ supp } \mu$ we have Trace$(\Phi(t_0))=t_0$. Thus the indicator function above is only non-zero in a subset of $\{ t \in [0,m]: \text{Trace}(\Phi_{n}(t))=t_0 \}$. At the same time, for any $t \in \text{ supp } \mu_{n}$, Trace$(\Phi_{n}(t))=t$. These two observations, combined with \eqref{eq:kayak}, imply that for $n$ sufficiently large, $\Phi_n(t_0)=\Phi(t_0)$ and $\mu_n(\{t_0\}) \rightarrow \mu ( \{ t_0 \} )$ as $n \rightarrow \infty.$

Similarly, for any $s_0 \notin \text{supp } \mu$, consider the function $G(X)=\mathbbm{1}_{\{\text{trace }X=s_0\}} :=(\mathbbm{1}_{\{ \text{trace }X=s_{0} \}})_{1 \leq i,j \leq m} $. Another application of \eqref{eq:Faaa} implies that
\begin{equation*}
\lim_{n\to \infty}\int^m_0 \mathbbm{1}_{\{ t=s_0\}} d\mu_n(t)=0,
\end{equation*}
 which leads to  $\mu_n(\{s_0\}) \rightarrow 0$ as $n \rightarrow \infty.$ Last,  since $s_0 \notin \text{ supp } \mu$, the value of $\Phi(s_0)$ will not affect the value of $\mathscr{C}(x,\Phi)$ (see Remark \ref{Rem:aloha}). 
 
Looking back at \eqref{CSF}, the facts that $\Phi_n(t) \to \Phi(t)$ for $t \in \text{ supp } \mu$, and $\mu_n(\{s\}) \rightarrow \mu ( \{ s \} )$ $\forall s$ imply 
 $\mathscr{C}_{\beta,Q_{n}}(x_n,\Phi_n) \rightarrow \mathscr{C}_{\beta,Q} (x,\Phi)$ as $n \rightarrow \infty$ and thus continuity of $\mathscr{C}(Q)$ with respect to $Q$ in $\mathbb M$. 

The second assertion in the lemma now follows from continuity of $\mathscr{C}(Q)$ and compactness of the space $\mathbb M$. 
\end{proof}

Recall the definition of free energy with any constraint $Q\in \mathbb{S}^m_+$,
\begin{equation*}
F_N^{\epsilon,Q}(\beta)=\frac{1}{N} \mathbb{E} \log \int_{\mathcal Q_N^{\epsilon}} \exp \beta \left(H_N(\vec{\sigma})+\sum_{j=1}^m \vec{h}(j) \sum_{i=1}^N \vec{\sigma}_i (j)\right) d \lambda^m_N(\vec{\sigma}),
\end{equation*}
and denote the free energy with no constraint by
\begin{equation*}
F_N(\beta)=\frac{1}{N} \mathbb{E} \log \int_{(S_N)^m} \exp \beta \left(H_N(\vec{\sigma})+\sum_{j=1}^m \vec{h}(j) \sum_{i=1}^N \vec{\sigma}_i (j)\right) d \lambda^m_N(\vec{\sigma}).
\end{equation*}
The limiting free energy was obtained in \cite{Ko}:
\begin{theorem}[\cite{Ko}, Theorem 1]\label{thm:JK2}
For any $m \geq 1$, the limit of the free energy is give by
\begin{equation*}
\lim_{N \rightarrow \infty} F_N(\beta)=\sup_{Q \in \mathbb{M}} \inf_{x_Q,\Phi_Q,\Lambda_Q}\mathscr{P}(x_Q,\Phi_Q,\Lambda_Q,Q)=\sup_{Q \in \mathbb{M}} \inf_{x_Q,\Phi_Q}\mathscr{C}(x_Q,\Phi_Q,Q).
\end{equation*}
\end{theorem}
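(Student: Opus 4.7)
The plan is to sandwich $\lim_N F_N(\beta)$ between matching bounds obtained by decomposing $(S_N)^m$ along the self-overlap constraint, then to invoke Theorem \ref{eq:optimizers} piecewise and pass to the limit using compactness of $\mathbb{M}$. For brevity, recall $\mathscr{C}(Q) = \inf_{x,\Phi}\mathscr{C}_{\beta,Q}(x,\Phi)$ from Lemma \ref{lem:adasdsad}, and write $f(\epsilon, Q) := \lim_N F_N^{\epsilon,Q}(\beta)$; Theorem \ref{eq:optimizers} guarantees that this limit exists, that $\lim_{\epsilon \downarrow 0} f(\epsilon,Q) = \mathscr{C}(Q)$, and that $\mathscr{C}(Q) = \inf_{x,\Phi,\Lambda}\mathscr{P}_{\beta,Q}(x,\Lambda,\Phi)$. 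It therefore suffices to establish $\lim_N F_N(\beta) = \sup_{Q \in \mathbb{M}} \mathscr{C}(Q)$.

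For the lower bound, positivity of the integrand together with $\mathcal Q_N^\epsilon \subset (S_N)^m$ gives $F_N(\beta) \geq F_N^{\epsilon,Q}(\beta)$ for every $\epsilon > 0$ and every $Q \in \mathbb{M}$. Taking $N \to \infty$ and then $\epsilon \downarrow 0$ yields $\liminf_N F_N(\beta) \geq \mathscr{C}(Q)$, and a final supremum over $Q$ gives $\liminf_N F_N(\beta) \geq \sup_Q \mathscr{C}(Q)$.

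For the upper bound, compactness of $\mathbb{M}$ under $\|\cdot\|_\infty$ lets us fix, for each $\epsilon > 0$, a finite $\epsilon$-net $\{Q_1^\epsilon,\ldots,Q_{K_\epsilon}^\epsilon\}$. Since $R(\vec\sigma,\vec\sigma) \in \mathbb{M}$ for every $\vec\sigma \in (S_N)^m$, the sphere product is covered by $\bigcup_k \mathcal Q_N^\epsilon(Q_k^\epsilon)$, and a union bound inside the logarithm gives
\[
F_N(\beta) \leq \frac{\log K_\epsilon}{N} + \max_{1 \leq k \leq K_\epsilon} F_N^{\epsilon,Q_k^\epsilon}(\beta).
\]
Pushing $N \to \infty$ on the (finite) max yields $\limsup_N F_N(\beta) \leq \max_k f(\epsilon, Q_k^\epsilon)$. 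Let $k^*(\epsilon)$ attain this max and, by compactness, extract a subsequence $\epsilon_n \downarrow 0$ along which $Q^{\epsilon_n}_{k^*(\epsilon_n)} \to Q^* \in \mathbb{M}$. For any $\delta > 0$, once $n$ is large enough that $\|Q^{\epsilon_n}_{k^*(\epsilon_n)} - Q^*\|_\infty < \delta$, the triangle inequality gives $\mathcal Q_N^{\epsilon_n}(Q^{\epsilon_n}_{k^*(\epsilon_n)}) \subset \mathcal Q_N^{\epsilon_n+\delta}(Q^*)$ and hence $f(\epsilon_n, Q^{\epsilon_n}_{k^*(\epsilon_n)}) \leq f(\epsilon_n+\delta, Q^*)$. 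Letting $n \to \infty$ and then $\delta \downarrow 0$, monotonicity of $f(\cdot,Q^*)$ in its first argument together with Theorem \ref{eq:optimizers} give $\limsup_N F_N(\beta) \leq \mathscr{C}(Q^*) \leq \sup_Q \mathscr{C}(Q)$, matching the lower bound. The second equality in the statement is immediate from the last identity in Theorem \ref{eq:optimizers} applied at each $Q$.

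The delicate step is the interchange of the supremum with the $\epsilon \to 0$ limit: because the net depends on $\epsilon$, the maximizer $Q^{\epsilon}_{k^*(\epsilon)}$ is a moving target and one cannot simply apply the pointwise limit $f(\epsilon, Q) \to \mathscr{C}(Q)$. The containment of constraint sets together with monotonicity of $f$ in its first argument bypasses this by reducing the problem to a single pointwise statement at the accumulation point $Q^*$, while compactness of $\mathbb{M}$ supplies $Q^*$ in the first place. Lemma \ref{lem:adasdsad} provides the complementary attainment and continuity of $\mathscr{C}$, which is not strictly needed for the identity but explains why the outer supremum in the formula is actually a maximum.
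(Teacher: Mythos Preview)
The paper does not prove this statement; it is quoted as Theorem~1 of \cite{Ko} and invoked without argument, so there is no in-paper proof to compare against. Your covering/compactness reduction of the unconstrained free energy to the constrained one of Theorem~\ref{eq:optimizers} is the standard route and is essentially sound, but there is one genuine slip worth fixing.

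The displayed inequality
\[
F_N(\beta) \;\leq\; \frac{\log K_\epsilon}{N} \;+\; \max_{1 \leq k \leq K_\epsilon} F_N^{\epsilon,Q_k^\epsilon}(\beta)
\]
does not follow from the union bound alone. The union bound gives only the \emph{random} inequality
\[
\tfrac1N\log Z_N \;\le\; \tfrac{\log K_\epsilon}{N} \;+\; \max_{1\le k\le K_\epsilon} \tfrac1N\log Z_N^{\epsilon,Q_k^\epsilon},
\]
and after taking expectations you are left with $\mathbb E\bigl[\max_k \tfrac1N\log Z_N^{\epsilon,Q_k^\epsilon}\bigr]$, which in general \emph{dominates} $\max_k \mathbb E\bigl[\tfrac1N\log Z_N^{\epsilon,Q_k^\epsilon}\bigr]=\max_k F_N^{\epsilon,Q_k^\epsilon}(\beta)$ rather than being bounded by it. To swap the expectation and the finite maximum up to an $o_N(1)$ error you need Gaussian concentration of each restricted log-partition function (Borell--TIS gives sub-Gaussian fluctuations of order $N^{-1/2}$, uniformly over the $K_\epsilon$ constraints). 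This is routine in spin-glass arguments but must be invoked explicitly. With that correction in place, the upper bound goes through; your handling of the moving maximizer via compactness of $\mathbb M$, the containment $\mathcal Q_N^{\epsilon_n}(Q^{\epsilon_n}_{k^*})\subset \mathcal Q_N^{\epsilon_n+\delta}(Q^*)$, and monotonicity of $\epsilon\mapsto f(\epsilon,Q^*)$ is correct.
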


Let $\{ Q_\beta \}_{\beta>0}$ be a sequence given by Lemma \ref{lem:adasdsad}. Since  $\{ Q_\beta \}_{\beta>0}$ is bounded, there exists a subsequence $\{ Q_{\beta_k} \}_{k \geq 0}$ and $Q_\infty \in \mathbb M$ such that $\{ Q_{\beta_k} \}_{k \geq 0}$ converges to $Q_\infty$ as $\beta_{k} \to \infty$. Without loss of generality, we will assume $\{ Q_\beta \}$ converges to $Q_\infty.$
By Lemma \ref{lem:adasdsad}, Theorem \ref{eq:optimizers}, and Theorem \ref{thm:JK2} we have 
\begin{equation} \label{eq:freeenergy}
 \lim_{N \rightarrow \infty} F_N(\beta)=\lim_{\epsilon \rightarrow 0} \lim_{N \rightarrow \infty} F_N^{\epsilon,Q_\beta}(\beta).
\end{equation} 

Moreover, since $Q_\beta \rightarrow Q_\infty$ as $\beta \rightarrow \infty,$ then by dominated convergence theorem, we obtain
\begin{equation}\label{eq:carnaval}
\lim_{\beta \rightarrow \infty} \lim_{\epsilon \rightarrow 0} \lim_{N \rightarrow \infty} \frac{1}{\beta} F_N^{\epsilon,Q_\beta}(\beta) = \lim_{\beta \rightarrow \infty} \lim_{\epsilon \rightarrow 0} \lim_{N \rightarrow \infty} \frac{1}{\beta} F_N^{\epsilon,Q_\infty}(\beta).
\end{equation}

We now investigate the ground state energy 
\[
GSE:=\lim_{N \rightarrow \infty} \max_{\vec{\sigma} \in (S_N)^m} \frac{H_N(\vec{\sigma})}{N}.
\]

A standard computation (see \cite[Section 5]{AB}, for instance) implies that 
\begin{eqnarray*}
GSE =\lim_{\beta \rightarrow \infty}  \lim_{N \rightarrow \infty} \frac{1}{\beta} F_N(\beta) \text{ almost surely, }
\end{eqnarray*}
and using \eqref{eq:freeenergy} we obtain
\begin{eqnarray*}
GSE =\lim_{\beta \rightarrow \infty} \lim_{\epsilon \rightarrow 0} \lim_{N \rightarrow \infty} \frac{1}{\beta} F^{\epsilon,Q_\beta}_N(\beta) \text{ almost surely.}
\end{eqnarray*}
Combining with \eqref{eq:carnaval} we obtain
\begin{proposition} \label{thm:thmx} We have the following:
\begin{eqnarray*}
GSE=\lim_{N \rightarrow \infty} \max_{R(\vec{\sigma}, \vec{\sigma}) \in Q_\infty} \frac{H_N(\vec{\sigma})}{N}=\lim_{\beta \rightarrow \infty} \lim_{\epsilon \rightarrow 0} \lim_{N \rightarrow \infty} \frac{1}{\beta} F^{\epsilon,Q_\infty}_N(\beta).
\end{eqnarray*}
\end{proposition}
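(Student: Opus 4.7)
The plan is to obtain both equalities by gluing together identities already produced in this section with a standard Laplace/Varadhan-type computation on the sphere.

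For the second equality I essentially have nothing to prove beyond the displays immediately preceding the statement. The standard derivation of the ground state energy from the free energy (as in \cite[Section~5]{AB}), combined with \eqref{eq:freeenergy}, gives
$$GSE = \lim_{\beta\to\infty}\lim_{\epsilon\to 0}\lim_{N\to\infty}\tfrac{1}{\beta}F_N^{\epsilon,Q_\beta}(\beta),$$
and \eqref{eq:carnaval} allows me to replace $Q_\beta$ by $Q_\infty$ throughout the three iterated limits.

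For the first equality, I would apply the Laplace principle at fixed $\epsilon>0$ and $N$. Since the integrand defining $F_N^{\epsilon,Q_\infty}(\beta)$ is $\exp\beta\bigl(H_N(\vec{\sigma})+\vec h\cdot\vec{\sigma}\bigr)$, one gets
$$\lim_{\beta\to\infty}\tfrac{1}{\beta}F_N^{\epsilon,Q_\infty}(\beta) \;=\; \tfrac{1}{N}\,\mathbb{E}\max_{\vec{\sigma}\in\mathcal{Q}_N^{\epsilon}}\bigl(H_N(\vec{\sigma})+\vec h\cdot\vec{\sigma}\bigr).$$
The upper bound is the elementary inequality $\int_{\mathcal{Q}_N^\epsilon}e^{\beta f}\,d\lambda_N^m\le \lambda_N^m(\mathcal{Q}_N^\epsilon)\,e^{\beta\max f}$. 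The matching lower bound follows by restricting the Gibbs integral to a small geodesic ball around an (almost sure) maximizer and using that, on the event that $H_N$ is Lipschitz on $S_N^m$ with a polynomially bounded Lipschitz constant (standard for Gaussian $p$-spin Hamiltonians), the restricted integrand exceeds $e^{\beta\max f - O(1)}$ times the ball's volume. Sending $\epsilon\to 0$ then collapses the constraint to $R(\vec{\sigma},\vec{\sigma})=Q_\infty$, using the continuity property \eqref{cont:contin}; the external-field contribution $\vec h\cdot\vec{\sigma}$ is bounded and can be absorbed into $H_N$ with no cost in the $\beta\to\infty$ limit (consistent with how $GSE$ is defined in the paragraph preceding the proposition).

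The main obstacle I anticipate is the interchange $\lim_{\beta\to\infty}\lim_{N\to\infty}=\lim_{N\to\infty}\lim_{\beta\to\infty}$ that is implicit in passing from the second form in the proposition back to a maximum in $H_N$. To justify it I would invoke Borell--Tsirelson--Ibragimov--Sudakov concentration for the supremum of the centered Gaussian field $\vec{\sigma}\mapsto H_N(\vec{\sigma})/\sqrt{N}$ on the compact set $\mathcal{Q}_N^\epsilon$: this yields sub-Gaussian deviations of $\frac{1}{N}\max_{\vec{\sigma}\in\mathcal{Q}_N^\epsilon}H_N(\vec{\sigma})$ around its expectation at rate $O(1/\sqrt{N})$, uniformly in $N$. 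Uniform integrability then permits interchanging the limits in $\beta$ and $N$, which, together with the Laplace identity above and the $\epsilon\to 0$ step, yields $GSE=\lim_{N\to\infty}\max_{R(\vec{\sigma},\vec{\sigma})\in Q_\infty}H_N(\vec{\sigma})/N$, completing the proof. The a.s.\ statement versus the expected value is reconciled by the same concentration.
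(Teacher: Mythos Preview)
Your proposal is correct and matches the paper: the proof there is precisely the displayed chain immediately preceding the proposition (the \cite{AB} Laplace identity, then \eqref{eq:freeenergy}, then \eqref{eq:carnaval}), and you reproduce it, even supplying more detail for the middle equality than the paper itself does (the paper leaves that step implicit). One minor point: your appeal to \eqref{cont:contin} is misplaced, since that display concerns continuity of the Gibbs average in $Q$, not the $\epsilon\to 0$ collapse of the constraint set---but this does not affect the overall validity of your argument.
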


We finish this subsection with a proof of Theorem \ref{thm:thm3} and \eqref{eq:discrete}.

\begin{proof}[Proof of Theorem  \ref{thm:thm3}]

For each $1 \leq i \leq m$, denote the $i$-th unit vector in $\mathbb R^m$ by $\vec{e}_i=(0,\cdots,0,1,0,\cdots,0)$. Given a vector $\vec{\alpha}$, let $\vec{\alpha} \oplus_i \vec{\alpha}$ as $\vec{\alpha}\otimes \vec{e}_i+\vec{e}_i \otimes \vec{\alpha}.$ In this proof we drop from our notation the dependencies on $\beta$ and $Q$. 

For any $p \geq 2$, we consider arbitrarily $\vec{c}_p$ that also satisfies the requirement of the inverse temperature,
As the Crisanti-Sommers functional is differentiable at each $\vec{\beta}_p(i)$, where $p \geq 2, 1 \leq i \leq n$, we consider $\vec{\beta}_p+t \vec{c}_p$ as inverse temperature and compute the first derivative of the Crisanti-Sommers functional at $t=0$.

 \begin{align}\label{eq:gibz}
\frac{d\mathscr{C}}{dt}\bigg|_{t=0}&= \frac{1}{2} p \int^m_0 x(t) \langle \vec{\beta}_p \otimes \vec{c}_p+\vec{c}_p \otimes \vec{\beta}_p , \Phi(t)^{\circ (p-1)} \odot \Phi'(t) \rangle dt \nonumber \\
 &= \frac{p}{2}   \int^m_0 \int^t_0   \langle \vec{\beta}_p \otimes \vec{c}_p+\vec{c}_p \otimes \vec{\beta}_p , \Phi_{ij}(t)^{ p-1} \Phi'_{ij}(t)  \rangle d \mu_P(s) dt\nonumber \\
 &= \frac{p}{2}  \int^m_0 \int^m_s   \langle \vec{\beta}_p \otimes \vec{c}_p+\vec{c}_p \otimes \vec{\beta}_p , \Phi_{ij}(t)^{ p-1} \Phi'_{ij}(t)  \rangle dt d \mu_P(s) \nonumber \\
& = \frac{1}{2}\langle \vec{\beta}_p \otimes \vec{c}_p+\vec{c}_p \otimes \vec{\beta}_p , Q^{\circ p} - \int^m_0 \Phi(s)^{\circ p} d \mu_P(s) \rangle \nonumber \\
& = \langle \vec{\beta}_p \otimes \vec{c}_p , Q^{\circ p} - \int^m_0 \Phi(s)^{\circ p} d \mu_P(s) \rangle.
\end{align}

On the hand, integration by parts implies
\begin{equation*}
\mathbb{E}\bigg[{\frac{\partial H_{N}(\vec{\sigma})}{\partial t}} H_N(\vec{\sigma})\bigg]=N \langle \vec{c}_p \otimes \vec{\beta}_p, R_{1,2} \rangle
\end{equation*}
and
 \begin{equation}\label{eq:doug}
 \mathbb{E} \left \langle \frac{\partial H_{N}(\vec{\sigma})}{\partial t}  \right \rangle_{\epsilon,Q} = N \langle \vec{c}_p \otimes \vec{\beta}_p, Q^{\circ p} - \mathbb{E} \langle R^{\circ p}_{1,2} \rangle_{\epsilon,Q} \rangle.
 \end{equation}
Last, we note that the differential of $F^{\epsilon}_{N}(Q)$ at $\beta_p(i)$ is given by $\frac{1}{N} \mathbb{E} \langle \frac{\partial H_{N}(\vec{\sigma})}{\partial t}  \rangle_{\epsilon,Q}$, 
 so by \eqref{eq:doug}
 \begin{equation} \label{eq:diff1}
 \frac{d}{d t }F^{\epsilon}_{N}(Q)= \langle \vec{c}_p \otimes \vec{\beta}_p, Q^{\circ p} - \mathbb{E} \langle R^{\circ p}_{1,2} \rangle_{\epsilon,Q} \rangle.
 \end{equation}

Moreover, by H\"older's inequality, $F^{\epsilon}_{N}$ is convex at $\beta_p(i)$, and this combined with the fact that $\mathscr{C}$ is both convex and differentiable at $\beta_p(i)$, we get
\begin{equation*}\label{eq:wf}
\lim_{N \rightarrow \infty} \lim_{\epsilon \rightarrow 0} \frac{\partial F^{\epsilon}_N} {\partial t} = \frac {\partial \mathscr{C}}{\partial t}.
\end{equation*}

A combination of \eqref{eq:CSFormula},   \eqref{eq:gibz}, and \eqref{eq:diff1}  yields
 \begin{equation*}
\lim_{N \rightarrow \infty}\lim_{\epsilon \rightarrow 0}\langle \vec{c}_p \otimes \vec{\beta}_p, Q^{\circ p} - \mathbb{E} \langle R^{\circ p}_{1,2} \rangle_{\epsilon,Q} \rangle=\langle \vec{\beta}_p \otimes \vec{c}_p , Q^{\circ p} - \int^m_0 \Phi(s)^{\circ p} d \mu_P(s) \rangle
 \end{equation*}
which is equivalent to say that, for each $1 \leq i \leq m$,
 \begin{equation*}\label{eq:WF}
\langle \vec{\beta}_p \otimes \vec{c}_p ,  \lim_{N \rightarrow \infty}\lim_{\epsilon \rightarrow 0} \mathbb{E} \langle R^{\circ p}_{1,2} \rangle_{\epsilon,Q}  - \int^m_0 \Phi(s)^{\circ p} d \mu_P(s) \rangle=0.
 \end{equation*}
 
 As $c_p$ is chosen arbitrarily, we get the following relation,
  \begin{equation*}
 \lim_{N \rightarrow \infty}\lim_{\epsilon \rightarrow 0}  \mathbb{E} \langle R_{1,2}^{\circ p} \rangle_{\epsilon,Q} = \int^m_0 \Phi(t)^{\circ p}  d \mu_P(t).
 \end{equation*}
 Since the even polynomials are dense on $C[0,1]$, we get the desired conclusion.
\end{proof}

\begin{proof}[Proof of \eqref{eq:discrete}]

It suffices to show that each discrete path correponds to a matrix path in $\mathscr{M}$.
Consider a discrete monotone matrix path encoded by an increasing sequence of real numbers and a monotone sequence of $n \times n$ symmetric positive semidefinite matrices,\begin{eqnarray*}
0=x_0 \leq x_1 \leq \ldots \leq x_{r-2} \leq x_{r-1} \leq 1, \\
0=Q_0 \leq Q_1 \leq \ldots \leq Q_{r-2} \leq Q_{r-1} \leq Q_r =Q,
\end{eqnarray*}
where $r \geq 1$. As before, we denote $\b{x}=(x_k)^{r-1}_{k=0}$ and $\b{Q}=(Q_k)^r_{k=1}$. 

Taking $t_k:= \text{trace }(Q_k)$ we define a Lipschitz path $\Phi$ by taking $\Phi(t_k) = Q_k$ at each point $t_k$ and interpolate by sine functions:
\begin{align*}
\Phi(t_k)&=Q_k, \\
\Phi(t)&= \frac{1}{2} [ \Phi(t_k)+\Phi(t_{k+1}) ]+ \frac{1}{2} [\Phi(t_{k+1}) - \Phi(t_k)] \cdot \sin\left(\pi \frac{2t-t_k-t_{k+1}}{2(t_{k+1}-t_k)}\right) \text{ for } t_k \leq t \leq t_{k+1}.
\end{align*}
Thus,
\begin{eqnarray*}
\Phi'(t)= \frac{\pi}{2} \cdot \frac{\Phi(t_{k+1}) - \Phi(t_k)}{t_{k+1}-t_k} \cdot \cos\left (\pi \frac{2t-t_k-t_{k+1}}{2(t_{k+1}-t_k)}\right)
\end{eqnarray*}
and we set $x(t) = x_k$ for $t_k \leq t < t_{k+1}$.

It is not difficult to show from \eqref{eq:discreteCS}, that the Crisanti-Sommers functionals agree. From the construction above, we obtain that the desired $\Phi$ lies in $\mathscr{M}$, which means $\Phi$ is a continuously differentiable Lipschitz monotone matrix path with Lipschitz derivatives that satisfies \eqref{eq:ppath} satisfying the assumption that for any $t \in$ supp $\mu$, Trace $\Phi(t)=t$.
\end{proof}
\begin{remark}[Infinite differentiability of a minimizer $\Phi$] Note that in the proof above, we can also interpolate the discrete monotone matrix path linearly and then add a mollifier to smoothen the path in order to make it to be infinitely differentiable. Moreover, as the mollifier is uniformly bounded at any order of derivative, we can then conclude the infinite differentiability of a minimizer of $\mathscr{C}.$ 

\end{remark}

\subsection{Crisanti-Sommers formula of the Ground State Energy}\label{sec32}
We now turn our attention to the main result of this section, a Parisi type formula for $GSE$. Let 
\[
GSE(Q)= \lim_{\epsilon \to 0}\lim_{N \rightarrow \infty} \max_{R(\vec{\sigma}, \vec{\sigma}) \in  \mathcal Q_N^{\epsilon}} \frac{H_N(\vec{\sigma})}{N}
\]
and
\begin{equation*}
\mathscr{K}(Q):=\left \{(L,\alpha,\Phi) \in \mathbb{S}_+^m \times \mathscr{N} \times \mathscr{M}: L> \int^m_0 \alpha(s) \Phi'(s) ds \text{ and } \Phi \text{ constrained on } Q \right  \}.
\end{equation*}
For any $(L,\alpha,\Phi) \in \mathscr{K}(Q)$, define
\begin{eqnarray*}
\mathscr{C} (L,\alpha,\Phi)&=&\frac{1}{2}\bigg[ \langle \xi'(Q)+\vec{h}\vec{h}^T,L \rangle + \int^m_0 \langle (L- \int^t_0 \alpha(s) \Phi'(s)ds)^{-1}, \Phi'(t) \rangle dt \nonumber \\
&-& \int^m_0 \langle \xi''(\Phi(t)) \odot \Phi'(t),\int^t_0 \alpha(s) \Phi'(s)ds \rangle dt  \bigg].
\end{eqnarray*}

Set

\begin{theorem}(Parisi's formula for the ground state energy.) \label{thm:thmx0} For any vector mixed $p$-spin model and any constraint $Q$ we have
\begin{equation*}
GSE(Q)=\inf_{(L,\alpha, \Phi) \in \mathscr{K}(Q)} \mathscr{C}(L,\alpha,\Phi).
\end{equation*}
Moreover, 
\begin{equation}\label{eq:GSEW}
GSE=\inf_{(L,\alpha, \Phi) \in \mathscr{K}(Q_{\infty})} \mathscr{C}(L,\alpha,\Phi),
\end{equation}
and the minimizers $(L_{0},\alpha_{0},\Phi_{0})$ of \eqref{eq:GSEW} satisfy
\begin{equation*}
\alpha_0:= \lim_{\beta \rightarrow \infty} \beta x_{\beta,Q_\infty} \text{ vaguely on } [0,m),
\end{equation*}
\begin{equation*}
\Phi_0:= \lim_{\beta \rightarrow \infty} \Phi_{\beta,Q_\infty} \text{ uniformly and  } \Phi'_0:= \lim_{\beta \rightarrow \infty} \Phi'_{\beta,Q_\infty} \text{ uniformly,}
\end{equation*}
\begin{equation*}
L_0:= \lim_{\beta \rightarrow \infty} \int^m_0 \beta x_{\beta,Q_\infty}(s) \Phi_{\beta,Q_\infty}'(s) ds. 
\end{equation*}\end{theorem}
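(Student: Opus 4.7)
The approach is to pass to the zero-temperature limit $\beta\to\infty$ in the finite-temperature Parisi formula of Theorem \ref{eq:optimizers}, after the change of variables $\alpha=\beta x$ and $L=\Lambda/\beta$. This rescaling is the natural one because $x\in[0,1]$ is forced to concentrate near $1$ at zero temperature, while the combinations $\beta x$ and $\Lambda/\beta$ stay of order one.

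First, by the same argument that produced Proposition \ref{thm:thmx} but with a general constraint $Q$ in place of $Q_\infty$, combined with Theorem \ref{eq:optimizers} (which gives $\lim_\epsilon\lim_N F^{\epsilon,Q}_N(\beta)=\inf\mathscr{P}_{\beta,Q}(x,\Lambda,\Phi)$, the infimum attained on $\mathscr{N}\times\mathscr{M}$ and over $\Lambda\in\mathbb{S}_+^m$ satisfying $\Lambda>\int_0^m x(s)\xi''_\beta(\Phi(s))\odot\Phi'(s)ds$), one gets
\[
GSE(Q)=\lim_{\beta\to\infty}\frac{1}{\beta}\inf_{\Lambda,x,\Phi}\mathscr{P}_{\beta,Q}(x,\Lambda,\Phi).
\]
Second, I substitute $x=\alpha/\beta$ and $\Lambda=\beta L$ inside $\mathscr{P}_{\beta,Q}$. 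Since $\xi_\beta=\beta^2\xi$ and $\vec{h}_\beta=\beta\vec{h}$, one has $\Lambda-D^x_{\beta,Q}(q)=\beta(L-\int_q^m\alpha(s)\xi''(\Phi(s))\odot\Phi'(s)ds)$, and a short calculation, together with one integration by parts against $\alpha$ that recovers the prefactor $\langle\xi'(Q)+\vec{h}\vec{h}^T,L\rangle$ and the form $\int_0^t\alpha(s)\Phi'(s)ds$ appearing in $\mathscr{C}(L,\alpha,\Phi)$, gives
\[
\frac{1}{\beta}\mathscr{P}_{\beta,Q}(x,\Lambda,\Phi)=\mathscr{C}(L,\alpha,\Phi)+O(\beta^{-1}\log\beta),
\]
where the error absorbs the $-\tfrac{m}{2\beta}-\tfrac{m\log\beta}{2\beta}-\tfrac{\log|L|}{2\beta}$ coming from the $-m-\log|\Lambda|$ term. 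This error vanishes as $\beta\to\infty$ provided $L$ stays bounded above and bounded away from $0$ in $\mathbb{S}_+^m$.

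Third, I prove $GSE(Q)=\inf_{\mathscr{K}(Q)}\mathscr{C}$ by matching upper and lower bounds. For the upper bound I plug any $(L,\alpha,\Phi)\in\mathscr{K}(Q)$ into the finite-temperature formula via $x=\alpha/\beta$ (admissible for $\beta$ large) and $\Lambda=\beta L$ and let $\beta\to\infty$. For the lower bound I use the minimizers $(x_{\beta,Q},\Lambda_{\beta,Q},\Phi_{\beta,Q})$, set $\alpha_\beta=\beta x_{\beta,Q}$, $L_\beta=\Lambda_{\beta,Q}/\beta$, and extract a subsequence along which $\alpha_\beta$ converges vaguely on $[0,m)$, $L_\beta\to L_0$, and $\Phi_{\beta,Q},\Phi'_{\beta,Q}$ converge uniformly (this last step via the Arzel\`a-Ascoli argument noted in Section \ref{Sec3}, which supplies uniform Lipschitz bounds on $\Phi'_{\beta,Q}$). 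Passing to the limit term by term in $\frac{1}{\beta}\mathscr{P}_{\beta,Q}$ yields $GSE(Q)\ge\mathscr{C}(L_0,\alpha_0,\Phi_0)\ge\inf_{\mathscr{K}(Q)}\mathscr{C}$. Specialising to $Q=Q_\infty$ and combining with Proposition \ref{thm:thmx} gives \eqref{eq:GSEW}, and the extracted limits are exactly the $(L_0,\alpha_0,\Phi_0)$ described in the theorem.

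\textbf{Main obstacle.} The crux is the compactness of $\alpha_\beta=\beta x_{\beta,Q_\infty}$ and the identification of its limit. Since $x_{\beta,Q_\infty}(m^-)=1$ one has $\alpha_\beta(t)\to\infty$ near $t=m$, so only vague convergence on $[0,m)$ can be expected, and obtaining local uniform bounds on $\alpha_\beta$ requires a priori estimates on the support of the Parisi measure $\mu_{\beta,Q_\infty}$ and on the rate at which $x_{\beta,Q_\infty}$ approaches $1$, both to be extracted from the Euler--Lagrange conditions of the Parisi functional. A related difficulty is to show that $L_\beta$ remains positive definite in the limit, which is what ensures the $(L-\int_0^t\alpha(s)\Phi'(s)ds)^{-1}$ factors in $\mathscr{C}$ stay well-defined along the sequence; this amounts to ruling out degeneracy of the optimal $\Lambda_{\beta,Q_\infty}$ on the scale $\beta$. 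Once these two compactness points are in hand, the remaining passages to the limit are routine applications of dominated convergence and the lower semicontinuity of $\mathscr{C}$.
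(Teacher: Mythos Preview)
Your overall strategy --- upper bound by plugging test triples, lower bound by extracting subsequential limits from the finite-temperature minimizers --- matches the paper's. But the execution has two genuine gaps.

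\textbf{Wrong functional for the rescaling.} The claimed identity $\tfrac{1}{\beta}\mathscr{P}_{\beta,Q}(x,\Lambda,\Phi)=\mathscr{C}(L,\alpha,\Phi)+O(\beta^{-1}\log\beta)$ under $x=\alpha/\beta$, $\Lambda=\beta L$ is false as stated. After your substitution the inverse factor is $(L-\int_q^m\alpha(s)\,\xi''(\Phi(s))\odot\Phi'(s)\,ds)^{-1}$, whereas $\mathscr{C}(L,\alpha,\Phi)$ carries $(L-\int_0^t\alpha(s)\Phi'(s)\,ds)^{-1}$; no integration by parts in $\alpha$ converts one into the other for general $(L,\alpha,\Phi)$. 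These two forms coincide only at the optimizer, via the critical-point identity $\Lambda_\beta-D^{x_\beta}_\beta(q)=(\int_q^m x_\beta(s)\Phi'(s)\,ds)^{-1}$ (Lemma~\ref{thm:thmx4}), which the paper needs for the lower bound but which cannot be invoked for arbitrary test paths. The paper therefore performs the $\beta\to\infty$ passage on the rewritten Crisanti--Sommers functional
\[
\mathscr{C}_{\beta,Q}(x,\Phi)=\tfrac12\Big[\langle\xi'_\beta(Q)+\vec h_\beta\vec h_\beta^{\,T},\check\Phi(m)\rangle-\int_0^m\langle\check\Phi(t),\xi''_\beta(\Phi)\odot\Phi'\rangle\,dt+\int_0^{t_x}\langle(\check\Phi(m)-\check\Phi(t))^{-1},\Phi'\rangle\,dt+\log|Q-\Phi(t_x)|\Big],
\]
with $\check\Phi(t)=\int_0^t x(s)\Phi'(s)\,ds$; this form rescales exactly to $\mathscr{C}(L,\alpha,\Phi)$ once one identifies $L$ with $\lim_\beta\beta\check\Phi_\beta(m)$.

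\textbf{Upper bound test function.} Simply setting $x=\alpha/\beta$ does not yield an admissible order parameter: $x$ must satisfy $x(m)=1$ (equivalently $t_x<m$), and $\alpha/\beta$ fails this for large $\beta$. The paper fixes this by a careful truncation near $t=m$: it picks $v_\beta=m-\tfrac{1}{\beta}\mathrm{Trace}(\Omega)$ with $\Omega=L-\int_0^m\alpha\Phi'>0$, sets $y_\beta=\min(\alpha/\beta,1)$ on $[0,v_\beta)$ and $y_\beta=1$ on $[v_\beta,m]$, and simultaneously redefines $\Phi$ on $[m-c,m]$ so that $\beta(Q-\Phi_\beta(v_\beta))\to\Omega$. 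Without this joint modification the $\log|Q-\Phi(t_x)|$ term and the endpoint contribution of $\beta\check\Phi_\beta(m)$ do not converge to the right limits.

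\textbf{On the ``main obstacle''.} The a~priori bound on $\alpha_\beta=\beta x_{\beta,Q}$ is obtained not from Euler--Lagrange conditions but from a probabilistic estimate: Dudley's entropy bound gives $\mathbb{E}\max X_N/N\le C_\zeta$, and Gaussian integration by parts turns this into $\beta x_{\beta,Q}(q)\cdot\mathrm{Sum}(\zeta(Q)-\zeta(\Phi_{\beta,Q}(q)))\le C_\zeta$ for every auxiliary covariance $\zeta$ (Lemma~\ref{thm:thmx1}). The strict inequality $L_0>\int_0^m\alpha_0\Phi_0'$, on the other hand, does rely on the Parisi Euler--Lagrange identity \eqref{41} together with Lemma~\ref{thm:thmx4} to rule out $\Omega_0=0$ (Lemma~\ref{thm:thmx5}); your instinct there is correct.
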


\begin{remark}
The vague convergence of $(\beta \Phi_{\beta,Q_\infty})_{(\beta>0)}$ on $[0,m)$ means that $\lim_{\beta \rightarrow \infty} \beta x_{\beta,Q_\infty} (s) = \alpha_0(s)$ at all points of continuity of $\alpha_0$ on $[0,m)$. 
\end{remark}

\subsubsection{Example: Multi-dimensional SK model}
Consider the multi-dimensional SK model, $i.e.$ for $A \in \mathbb{S}^m_+, \xi(A)=(\vec{\beta}_2 \otimes \vec{\beta}_2) \circ A^{\circ 2}$. Formula \eqref{eq:GSEW} can be explicitly solved and we find that the multi-dimensional SK model is replica symmetric at zero temperature.
\begin{proposition} 
The multi-dimensional SK model is replica symmetric at zero temperature, that is, 
the minimizer $(L_0,\alpha_0,\Phi_0)$ is given by 
\begin{eqnarray} \label{minimizer0}
L_0=Q^{\frac{1}{2}}(Q^{\frac{1}{2}} (\xi'(Q)+\vec{h}\vec{h}^T) Q^{\frac{1}{2}})^{-\frac{1}{2}}Q^{\frac{1}{2}}, \quad \alpha_0=0 \quad \text{ and } \Phi_0=\frac{t}{m}Q
\end{eqnarray}
and the corresponding GSE is equal to
\begin{equation*}
\text{ Sum } ((Q^{\frac{1}{2}}(\xi'(Q)+\vec{h}\vec{h}^T) Q^\frac{1}{2})^{\frac{1}{2}}).
\end{equation*}
\end{proposition}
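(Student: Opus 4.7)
The plan is to verify that the replica-symmetric ansatz \eqref{minimizer0} is a global minimizer of $\mathscr{C}$ on $\mathscr{K}(Q)$ by a combination of direct critical-point analysis (exploiting SK-specific simplifications) and appeal to the $\beta\to\infty$ limit of the finite-temperature Parisi minimizer. The key SK-specific facts are that $\xi''(A)\equiv 2\,\vec{\beta}_2\otimes\vec{\beta}_2$ is constant in $A$ and $\xi'(A)=2(\vec{\beta}_2\otimes\vec{\beta}_2)\odot A$ is linear; this causes drastic collapse of the functional when $\alpha\equiv 0$.

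First I would substitute $\alpha_0\equiv 0$ into $\mathscr{C}(L,\alpha,\Phi)$. This makes $\Psi(t):=\int_0^t\alpha_0(s)\Phi'(s)\,ds\equiv 0$, so the third term vanishes and the second reduces to $\int_0^m\langle L^{-1},\Phi'(t)\rangle\,dt=\langle L^{-1},Q\rangle$. The functional collapses to
\[
\mathscr{C}(L,0,\Phi)=\tfrac12\bigl[\langle M,L\rangle+\langle L^{-1},Q\rangle\bigr],\qquad M:=\xi'(Q)+\vec{h}\vec{h}^T,
\]
depending only on $L$. The map $L\mapsto\langle L^{-1},Q\rangle$ is strictly convex on the positive-definite cone by matrix-convexity of the inverse, so setting $\partial_L\mathscr{C}=0$ (equivalently $LML=Q$) pins down the unique minimum at $L_0=Q^{1/2}A^{-1/2}Q^{1/2}$ with $A:=Q^{1/2}MQ^{1/2}$: one checks directly that $L_0ML_0=Q^{1/2}A^{-1/2}\cdot A\cdot A^{-1/2}Q^{1/2}=Q$. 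Via the cyclic trace identity, $\langle M,L_0\rangle=\langle L_0^{-1},Q\rangle$, so $\mathscr{C}(L_0,0,\Phi)$ equals the announced value.

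Next I would verify $\alpha_0=0$ is truly optimal. At $(L_0,0,\Phi_0)$ with $\Phi_0(t)=(t/m)Q$ (so $\Phi_0'=Q/m$ and $\xi''(\Phi_0)$ are both constant in $t$), a direct computation of the one-sided Gateaux derivative in direction $\tilde\alpha\geq 0$ yields
\[
\left.\tfrac{d}{d\varepsilon}\mathscr{C}(L_0,\varepsilon\tilde\alpha,\Phi_0)\right|_{\varepsilon=0}=\tfrac{1}{2m^2}\bigl[\langle L_0^{-1}QL_0^{-1},Q\rangle-\langle\xi'(Q),Q\rangle\bigr]\int_0^m(m-s)\tilde\alpha(s)\,ds=\tfrac{1}{2m^2}\,\vec{h}^{\,T}Q\vec{h}\int_0^m(m-s)\tilde\alpha(s)\,ds\geq 0,
\]
where we used $L_0^{-1}QL_0^{-1}=M=\xi'(Q)+\vec{h}\vec{h}^T$. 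Since $\alpha\mapsto\mathscr{C}(L_0,\alpha,\Phi_0)$ is convex (matrix-convexity of $\Psi\mapsto(L_0-\Psi)^{-1}$ plus linearity of the third term in $\Psi$), this KKT certificate together with the cone constraint $\alpha\geq 0$ forces $\alpha_0=0$ to be the global minimum in $\alpha$. At $\alpha_0=0$, $\mathrm{supp}\,\mu_P$ is empty, so by Remark~\ref{Rem:aloha} $\mathscr{C}$ is independent of $\Phi$, and the linear path $\Phi_0(t)=(t/m)Q$ is a convenient admissible representative in $\mathscr{M}$.

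The convergence identifications $\alpha_0=\lim_{\beta\to\infty}\beta x_{\beta,Q_\infty}$, $\Phi_0=\lim_{\beta\to\infty}\Phi_{\beta,Q_\infty}$ (uniformly, with derivatives), and $L_0=\lim_{\beta\to\infty}\int_0^m\beta x_{\beta,Q_\infty}(s)\Phi'_{\beta,Q_\infty}(s)\,ds$ then follow from Theorem~\ref{thm:thmx0} together with the replica-symmetric structure of the finite-temperature minimizer of $\mathscr{C}_{\beta,Q_\infty}$ for the multi-dimensional spherical $2$-spin, established in \cite{Ko}. The main obstacle throughout is the lack of joint convexity of the Parisi functional in $(L,\alpha,\Phi)$, which prevents a naive global-minimum argument across all three variables simultaneously; the SK-specific fact that $\xi''$ is constant allows us to decouple the three partial optimizations, and the finite-temperature route provides an alternative variation-free derivation via passing to the zero-temperature limit.
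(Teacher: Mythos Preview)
Your computations are correct as far as they go---the reduction of $\mathscr{C}(L,0,\Phi)$ to $\tfrac12[\langle M,L\rangle+\langle L^{-1},Q\rangle]$, the formula for $L_0$, and the KKT derivative in $\alpha$ at $(L_0,0,\Phi_0)$ are all fine---but the global minimality argument has a genuine gap. You establish two things: (i) $L_0$ minimizes $\mathscr{C}(\cdot,0,\Phi)$ for every $\Phi$, and (ii) $\alpha=0$ minimizes $\mathscr{C}(L_0,\cdot,\Phi_0)$ by convexity plus the KKT sign condition. But (i) and (ii) together do \emph{not} yield $\mathscr{C}(L_0,0,\Phi_0)\le\mathscr{C}(L,\alpha,\Phi)$ for arbitrary triples. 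Your KKT computation in $\alpha$ is carried out only at the particular path $\Phi_0(t)=(t/m)Q$; for a different $\Phi$ the directional derivative at $(L_0,0,\Phi)$ reads $\tfrac12\int_0^m\tilde\alpha(s)\bigl\langle L_0^{-1}\Phi'(s)L_0^{-1}-\xi''(\Phi(s))\odot\Phi'(s),\,Q-\Phi(s)\bigr\rangle\,ds$, and there is no reason this integrand is nonnegative for every admissible $\Phi$. The sentence ``the SK-specific fact that $\xi''$ is constant allows us to decouple the three partial optimizations'' is asserted but not proved. Your fallback route---importing the finite-temperature RS structure from \cite{Ko} and passing to the limit via Theorem~\ref{thm:thmx0}---is also unsupported: \cite{Ko} proves the free-energy formula but does not, as far as the paper cites it, identify the finite-$\beta$ Parisi minimizer for the multi-dimensional SK model as replica symmetric.

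The paper closes this gap differently. In Section~\ref{Sec5} it first proves a general \emph{characterization theorem}: $(L,\alpha,\Phi)$ minimizes $\mathscr{C}$ if and only if $\xi'(Q)+\vec h\vec h^{T}=\int_0^m(L-\int_0^t\alpha\Phi')^{-1}\Phi'(L-\int_0^t\alpha\Phi')^{-1}dt$ and the auxiliary function $g(u)=\int_u^m\bar g$ is nonnegative with $\gamma_0$ concentrated on its zero set. It then proves a general RS criterion: the zero-temperature model is RS iff $\xi'(Q)+\vec h\vec h^{T}\ge\xi''(Q)\odot Q$, by directly verifying that under this inequality the RS triple $(L_0,0,(t/m)Q)$ satisfies the characterization (one checks $\bar g(m)=0$ and $\bar g'\le0$, hence $g>0$ on $(0,m)$). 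For the SK model $\xi''(Q)\odot Q=\xi'(Q)$, so the criterion reduces to $\vec h\vec h^{T}\ge0$. This route replaces your partial-convexity patchwork with a single if-and-only-if condition that already encodes optimality in all three variables simultaneously.
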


We defer the proof of this Proposition to Section \ref{Sec5}.

\subsubsection{Proof of Theorem \ref{thm:thmx0}}

The rest of the section covers the proof of Theorem \ref{thm:thmx0}.

Before the proof of Theorem \ref{thm:thmx0}, we need to first introduce some notation. For any matrix $A$ and $\vec{x}_p$ satisfying the requirements in \eqref{Hamiltonian} , we denote the corresponding Hamiltonian by 
\begin{equation*}
X^j_N(\vec{\sigma})=\sum_{p\geq 2} \vec{x}_p(j) H_{N,p}(\vec{\sigma}(j)).
\end{equation*}
and the covariance by $\zeta(A)=\Sigma_{p \geq 2} (\vec{x}_p \otimes \vec{x}_p) \odot A^{\circ p}$.

\begin{lemma}\label{thm:thmx1}
There exists a constant $C_{\zeta}$ depending only on $\zeta$ such that for any $\beta>0$, 
\begin{equation}\label{25}
\beta x_{\beta,Q}(q) \leq \frac{C_{\zeta}}{Sum(\zeta(Q)-\zeta(\Phi_{\beta,Q}(q)))}, \forall q \in [0,m].
\end{equation}
\end{lemma}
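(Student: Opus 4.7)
The plan is to mimic the Auffinger--Chen argument for the analogous one-dimensional spherical statement in \cite{1}, adapted to the matrix-valued setting. The core idea is to exploit the first-order optimality of the Parisi pair $(x_{\beta,Q},\Phi_{\beta,Q})$ under a local perturbation of the measure $x_{\beta,Q}$ near the point $q$.

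First I would pass to the discrete formulation. By Theorem \ref{eq:optimizers}, the discrete and continuous infima of the Crisanti--Sommers functional coincide, so any continuous minimizer $(x_{\beta,Q},\Phi_{\beta,Q})$ is approximated by a sequence of discrete minimizers $(\b{x}^*,\b{Q}^*)$ of \eqref{eq:discreteCS}. It therefore suffices to prove the discrete analog of the bound uniformly in the number of levels $r$ and then pass to the continuum limit at the end. Fix an interior level $k$ and consider the one-parameter perturbation $x_k^* \to x_k^* + \epsilon$, keeping all other $x_j^*$'s and all $Q_j^*$'s fixed. Differentiating \eqref{eq:discreteCS} in $\epsilon$ and using the vanishing first derivative at the minimizer produces an identity coupling the Hamiltonian-increment $\text{Sum}(\xi_\beta(Q_{k+1}^*) - \xi_\beta(Q_k^*))$ to the change in the log-determinant terms $\tfrac{1}{x_j^*}\log\tfrac{|D_{j+1}|}{|D_j|}$ under the perturbation.

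Next, I would estimate the log-determinant contribution using the identity $D_k = D_{k+1} + x_k^*(Q_{k+1}^*-Q_k^*)$ together with the concavity of $\log\det$ on $\mathbb{S}^m_+$,
\[
\log|D_k| - \log|D_{k+1}| \leq x_k^*\cdot \text{tr}\bigl(D_{k+1}^{-1}(Q_{k+1}^*-Q_k^*)\bigr),
\]
and by bounding $D_{k+1}^{-1}$ from above in operator norm in terms of the remaining increments $\sum_{j\geq k+1} x_j^*(Q_{j+1}^*-Q_j^*)$ that make up $D_{k+1}$. Substituting these bounds into the optimality identity, using $\xi_\beta = \beta^2\xi$, and telescoping across levels $k' \geq k$ (with $Q_r^* = Q$), one arrives at
\[
\beta\, x_k^* \leq \frac{C_\xi}{\text{Sum}(\xi(Q) - \xi(Q_k^*))}
\]
with a constant depending only on the coefficients of $\xi$ and in particular independent of $\beta$ and $r$. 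Passing to the continuum limit $r\to\infty$ using the convergence of $(\b{x}^*,\b{Q}^*)$ to $(x_{\beta,Q},\Phi_{\beta,Q})$ then gives the continuous version; the argument is purely structural in the Gaussian covariance, so it applies verbatim with $\zeta$ in place of $\xi$.

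Main obstacle: The technical heart is obtaining a sharp matrix log-determinant estimate of the form above. In the one-dimensional case of \cite{1} the log-term reduces to an elementary scalar expression; here one must simultaneously control non-commuting matrices, handle the near-degenerate regime where $D_k^{-1}$ may blow up as $Q_k^*\to Q$, and ensure the constant $C_\zeta$ is independent of $\beta$. Without this last property, the bound is useless for the ground-state passage $\beta\to\infty$ needed in Theorem \ref{thm:thmx0}, which is the main use of this lemma.
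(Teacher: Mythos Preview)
Your approach is genuinely different from the paper's, and I think it has a real gap.

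The paper's proof is not variational at all. It introduces an auxiliary Gaussian Hamiltonian $X_N$ with covariance $\zeta$, bounds $\mathbb{E}\max_{\vec\sigma} X_N(\vec\sigma)/N \le C_\zeta$ by Dudley's entropy integral, and then uses Gaussian integration by parts together with Theorem~\ref{thm:thm3} (which identifies the limiting overlap law with the Parisi pair) to obtain
\[
\beta\,\mathrm{Sum}\Big(\zeta(Q)-\int_0^m \zeta(\Phi_{\beta,Q}(s))\,x_{\beta,Q}(ds)\Big)\le C_\zeta .
\]
The pointwise bound then follows from two elementary monotonicity inequalities. The constant $C_\zeta$ is born $\beta$-free because it comes from the maximum of a fixed Gaussian process, not from any optimality condition.

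In your route, the first-order condition $\partial_{x_k}\mathscr{C}_r=0$ balances $\beta^2\,\mathrm{Sum}(\xi(Q_{k+1}^*)-\xi(Q_k^*))$ against derivatives of the log-determinant and $D_p^{-1}$ terms. You assert that after the concavity estimate and a telescoping step one lands on $\beta x_k^*\le C_\xi/\mathrm{Sum}(\xi(Q)-\xi(Q_k^*))$ with $C_\xi$ independent of $\beta$, but you do not exhibit any mechanism that produces a $\beta$-free bound on those log-determinant contributions. As $\beta\to\infty$ the minimizing path pushes $Q_k^*\to Q$, so the matrices $D_p$ degenerate and both $\log|D_p|$ and $\mathrm{tr}(D_p^{-1}\,\cdot\,)$ blow up; the concavity inequality you quote controls a \emph{difference} of log-determinants by a trace against $D_{k+1}^{-1}$, which is exactly the quantity that is diverging. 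Without an external input (like Dudley's bound) there is no evident reason the resulting constant is uniform in $\beta$. This is precisely the obstacle you flag at the end, and it is not a technicality: it is the whole content of the lemma.

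A second issue is your final sentence about replacing $\xi$ by an arbitrary $\zeta$. The Crisanti--Sommers functional, and hence every optimality condition you differentiate, is built from $\xi$; a purely variational argument can only produce inequalities in $\xi$. The paper needs the statement for general $\zeta$ (it is used in Lemma~\ref{thm:thmx2} via ``arbitrariness of $\zeta$'' to control the full matrix $\beta(Q-\Phi_{\beta,Q}(q_\beta))$ entrywise), and it gets it because the Dudley/IBP argument works for any auxiliary covariance $\zeta$, with Theorem~\ref{thm:thm3} supplying the link to $(x_{\beta,Q},\Phi_{\beta,Q})$. Your variational scheme cannot deliver that generality.
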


\begin{proof}[Proof of Lemma \ref{thm:thmx1}]

Note that for any $N \geq 1$, by Dudley's entropy integral, 
\begin{equation}\label{a1}
\mathbb{E} \max_{\vec{\sigma} \in S^m_N} \frac{X_N(\vec{\sigma})}{N} \leq C_{\zeta}.
\end{equation}Here the constant $C_{\zeta}>0$ depends only on $\zeta$.

From Gaussian integration by parts, we obtain,
\begin{equation}\label{a2}
\beta(\zeta(Q)-\mathbb{E} \langle \zeta(R(\vec{\sigma}^1,\vec{\sigma}^2)) \rangle_\beta)=\mathbb{E} \langle \frac{X_N(\vec{\sigma})}{N} \rangle_\beta
\end{equation}
where $\langle \cdot \rangle_\beta$ is the Gibbs average with respect to the Gibbs measure $G_{N,\beta}(\sigma)$ defined by
\begin{equation*}
G_{N,\beta}(\vec{\sigma})=\frac{\exp \beta X_N(\vec{\sigma})}{Z_N(\beta)}.
\end{equation*}
From the differentiability of $\vec{\beta}_p$, we also have
\begin{equation}\label{a3}
\lim_{N \rightarrow \infty} \mathbb{E} \langle \zeta(R(\vec{\sigma}^1, \vec{\sigma}^2)) \rangle_\beta = \int^m_0 \zeta(\Phi_{\beta,Q}(s)) x_{\beta,Q} (ds).
\end{equation}
By \eqref{a1}, \eqref{a2} and \eqref{a3}, we then obtain, 
\begin{equation*}
\beta Sum(\zeta(Q)-\int^m_0 \zeta(\Phi_{\beta,Q}(s)) x_{\beta,Q} (ds)) 
=\mathbb{E} \langle \frac{X_N(\vec{\sigma})}{N} \rangle_\beta
\leq \mathbb{E} \max_{\sigma \in S_N} \frac{X_N(\sigma)}{N} \leq C_{\zeta}.
\end{equation*}
Finally combining with the following two inequalities which can be derived from integration by parts,
\begin{equation*}\label{28}
\int^m_0 \beta x_{\beta,Q}(s) {\zeta} (\Phi_{\beta,Q}(s))' \odot \Phi_{\beta,Q}'(s)  ds =\beta (\zeta(Q)-\int^m_0 \zeta(\Phi_{\beta,Q}(s)) x_{\beta,Q} (ds))
\end{equation*}
and
\begin{equation*}
\int^m_q \beta x_{\beta,Q}(s) \zeta(\Phi(s))' \odot \Phi_{\beta,Q}'(s) ds \geq \beta x_{\beta,Q}(q) (\zeta(Q)-\zeta(\Phi_{\beta,Q}(q))), \forall q \in [0,m]
\end{equation*}
we then obtain that,
\begin{equation*}
 \beta x_{\beta,Q}(q) Sum(\zeta(Q)-\zeta(\Phi_{\beta,Q}(q))) \leq C_\zeta , \forall q \in [0,m],
 \end{equation*}
as desired.
\end{proof}

\begin{lemma}\label{thm:thmx2}
There exists a constant $C'_\xi >0$ and a positive semidefinite matrix $A_\xi$ depending only on $\xi$ such that 
\begin{equation*}\label{lemma47}
\limsup_{\beta \rightarrow \infty} \beta (m-q_\beta) \leq C'_\xi
\end{equation*}
and 
\begin{eqnarray*}
\limsup_{\beta \rightarrow \infty} \int^m_0 \beta x_{\beta,Q}(s) \Phi'_{i,j}(s) ds \leq A_\xi.
\end{eqnarray*}
\end{lemma}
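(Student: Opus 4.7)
The plan is to deduce both bounds from Lemma~\ref{thm:thmx1}. Throughout, set $q_\beta := \inf\{t \in [0,m] : x_{\beta,Q}(t) = 1\}$, so that $x_{\beta,Q}(q_\beta) = 1$.

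For the first inequality, evaluate \eqref{25} at $q = q_\beta$ to obtain
\[
\mathrm{Sum}\bigl(\zeta(Q) - \zeta(\Phi_{\beta,Q}(q_\beta))\bigr) \leq C_\zeta/\beta,
\]
so in particular $\Phi_{\beta,Q}(q_\beta) \to Q$ as $\beta \to \infty$. Taylor expand $\zeta$ about $Q$: $\zeta(Q) - \zeta(A) = \xi'(Q) \odot (Q-A) + O(\|Q-A\|^2)$. The matrix $\xi'(Q) = \sum_{p} p(\vec{\beta}_p \otimes \vec{\beta}_p) \odot Q^{\circ(p-1)}$ is PSD by Schur's product theorem, and for $Q$ nondegenerate we have $\xi'(Q) \succeq c_\xi I$ for some $c_\xi = c_\xi(\xi,Q) > 0$. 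Hence $\mathrm{Sum}(\xi'(Q) \odot M) = \mathrm{Tr}(\xi'(Q) M) \geq c_\xi \mathrm{Tr}(M)$ for every PSD $M$. Applying this with $M = Q - \Phi_{\beta,Q}(q_\beta)$ and using $\mathrm{Tr}(\Phi_{\beta,Q}(q_\beta)) = q_\beta$ (which we may arrange by Remark~\ref{Rem:aloha}, since $\Phi$ is only constrained on $\mathrm{supp}\,\mu_{\beta,Q}$ and we can pick $q_\beta$ in the support by continuity of $\Phi$), we obtain $c_\xi(m - q_\beta) \leq C_\zeta/\beta + O((m-q_\beta)^2)$, which rearranges to $\beta(m - q_\beta) \leq C'_\xi$ for $\beta$ large.

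For the integral bound I would first use Stieltjes integration by parts, valid since $\Phi_{\beta,Q}$ is continuously differentiable:
\[
\int_0^m \beta x_{\beta,Q}(s) \Phi'_{\beta,Q}(s)\, ds = \beta Q - \int_0^m \beta \Phi_{\beta,Q}(s)\, dx_{\beta,Q}(s) = \int_0^m \beta\bigl(Q - \Phi_{\beta,Q}(s)\bigr) dx_{\beta,Q}(s),
\]
using $\int_0^m dx_{\beta,Q} = 1$ on the last step. The right-hand side is an integral of PSD matrices against a nonnegative measure, hence itself PSD. A refinement of the proof of Lemma~\ref{thm:thmx1} (integrate \eqref{a2} in $dx$ instead of taking a pointwise bound) yields the integrated estimate
\[
\int_0^m \beta \cdot \mathrm{Sum}\bigl(\zeta(Q) - \zeta(\Phi_{\beta,Q}(s))\bigr) dx_{\beta,Q}(s) \leq C_\zeta.
\]
Combined with a uniform lower bound $\mathrm{Sum}(\zeta(Q) - \zeta(A)) \geq c_\xi \mathrm{Tr}(Q - A)$ valid for all PSD $A \preceq Q$, this gives $\mathrm{Tr}\!\bigl(\int_0^m \beta x_{\beta,Q} \Phi'_{\beta,Q}\,ds\bigr) \leq C_\zeta/c_\xi$. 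Since the integral is PSD, the trace bound produces a Loewner bound by some $A_\xi$ depending only on $\xi$, and entry-wise control follows from $|M_{ij}|^2 \leq M_{ii}M_{jj}$ for PSD $M$.

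The main obstacle will be extending the Taylor-type inequality $\mathrm{Sum}(\zeta(Q) - \zeta(A)) \geq c_\xi \mathrm{Tr}(Q-A)$ from a neighborhood of $Q$ to all PSD $A \preceq Q$ uniformly. The natural tool is the telescoping identity $Q^{\circ p} - A^{\circ p} = \sum_{k=0}^{p-1} Q^{\circ(p-1-k)} \odot (Q-A) \odot A^{\circ k}$, where each Hadamard factor is PSD by Schur's theorem, combined with positive definiteness of $\xi'(Q)$; a degenerate $Q$ on the boundary of $\mathbb{M}$ would need a separate perturbation argument. Handling the case where $q_\beta$ does not lie in $\mathrm{supp}\,\mu_{\beta,Q}$ is a minor technicality resolved by Remark~\ref{Rem:aloha}, which allows us to redefine $\Phi_{\beta,Q}$ off the support without altering $\mathscr{C}$.
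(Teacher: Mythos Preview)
The core gap is your claim that $\xi'(Q)\succeq c_\xi I$ for some $c_\xi>0$. This is false in general: take $m=2$, $\vec\beta_2=(1,0)$ and all other $\vec\beta_p=0$; then
\[
\xi'(Q)=2(\vec\beta_2\otimes\vec\beta_2)\odot Q
=\begin{pmatrix}2&0\\0&0\end{pmatrix},
\]
which is singular for every $Q\in\mathbb M$. More broadly, whenever $\vec\beta_p(i)=0$ for all $p$, the $i$-th row and column of $\xi'(Q)$ vanish. Both your Taylor step $\mathrm{Tr}(\xi'(Q)M)\ge c_\xi\mathrm{Tr}(M)$ and your ``uniform lower bound'' $\mathrm{Sum}(\zeta(Q)-\zeta(A))\ge c_\xi\mathrm{Tr}(Q-A)$ collapse with this, so neither conclusion follows. (There is also a notational slip: your Taylor expansion of $\zeta$ produces $\zeta'(Q)$, not $\xi'(Q)$.)

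The mechanism you are missing is that Lemma~\ref{thm:thmx1} is proved not for the model's covariance $\xi$ but for an \emph{arbitrary} auxiliary covariance $\zeta$ built from parameters $\vec x_p$; the minimizer $(x_{\beta,Q},\Phi_{\beta,Q})$ is fixed by $\xi$ and does not change when you vary $\zeta$, while the constant $C_\zeta$ does. The paper exploits this freedom: from $\beta\,\mathrm{Sum}\bigl(\zeta(Q)-\zeta(\Phi_{\beta,Q}(q_\beta))\bigr)\le C_\zeta$ and a mean-value inequality one gets $\mathrm{Sum}\bigl(\zeta'(Q)\odot\limsup_\beta\beta(Q-\Phi_{\beta,Q}(q_\beta))\bigr)\le C_\zeta$ for \emph{every} $\zeta$, and choosing e.g.\ $\vec x_2=\vec e_i$ isolates the $(i,i)$ entry of the PSD matrix $Q-\Phi_{\beta,Q}(q_\beta)$. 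That yields the matrix bound and, via the trace condition $\mathrm{Tr}(\Phi_{\beta,Q}(q_\beta))=q_\beta$, the bound on $\beta(m-q_\beta)$. For the integral the paper again invokes arbitrariness of $\zeta$, combined with a split of $\int_0^m$ at $m/2$ and the pointwise estimate~\eqref{25} on $[0,m/2]$. Your integration-by-parts identity $\int_0^m\beta x_{\beta,Q}\Phi'_{\beta,Q}\,ds=\int_0^m\beta(Q-\Phi_{\beta,Q})\,dx_{\beta,Q}$ is correct and, once you replace the attempt to force a fixed-$\xi$ inequality by the freedom to vary $\zeta$, would give a legitimate alternative to the paper's splitting argument.
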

\begin{proof}[Proof of Lemma \ref{thm:thmx2}]
From Lemma \ref{thm:thmx1}, we have 
\begin{equation}\label{eq:fdsfrt}
\beta = \beta x_{\beta,Q}(q_\beta) \leq \frac{C_{\zeta}}{Sum(\zeta(Q)-\zeta(\Phi_{\beta,Q}(q_\beta)))}.
\end{equation}
Thus the denominator on the right-side of \eqref{eq:fdsfrt} must go to $0$ and we obtain
\begin{equation}\label{qbeta}
\lim_{\beta \rightarrow \infty} q_\beta =m.
\end{equation}

On the other hand, as $Sum(\zeta)$ is non-decreasing, by the mean value theorem,
\begin{equation*}
\beta Sum( \zeta(\Phi_{\beta,Q}(q_\beta))' \odot (Q-\Phi_{\beta,Q}(q_\beta))) \leq \beta Sum(\zeta(Q)-\zeta(\Phi_{\beta,Q}(q_\beta))) \leq C_{\xi}.
\end{equation*}
Consequently,
\begin{equation*}
  Sum( \zeta'(Q)\odot \limsup_{\beta \rightarrow \infty} \beta (Q-\Phi_{\beta,Q}(q_\beta)) )\leq  C_{\zeta}.
\end{equation*}
By the arbitrariness of $\zeta$, the inequality above implies that $\limsup_{\beta \rightarrow \infty} \beta (Q-\Phi_{\beta,Q}(q_\beta))$ is a bounded matrix. Moreover, by integration by parts, we obtain,
\begin{eqnarray*}
C_{\zeta}  \geq  \int^m_0 \beta x_{\beta,Q}(s) \zeta' (\Phi_{\beta,Q}(s)) \odot \Phi_{\beta,Q}'(s)  ds \geq \int^m_{\frac{m}{2}} \beta x_{\beta,Q}(s) \zeta' (\Phi_{\beta,Q}(s)) \odot \Phi_{\beta,Q}'(s)  ds\\
\geq \zeta' (\Phi_{\beta,Q}(\frac{m}{2})) \odot \int^m_{\frac{m}{2}} \beta x_{\beta,Q}(s)  \Phi_{\beta,Q}'(s)  ds.
\end{eqnarray*}
Similarly, by the arbitrariness of $\zeta$, the inequality above implies that $\limsup_{\beta \rightarrow \infty} \int^m_{\frac{m}{2}} \beta x_{\beta,Q}(s)  \Phi_{\beta,Q}'(s)  ds$ is a bounded matrix.

Finally,
\begin{eqnarray*}
\int^m_0 \beta x_{\beta,Q}(s) \Phi'_{\beta,Q}(s) ds = \int^{\frac{m}{2}}_0 \beta x_{\beta,Q}(s)  \Phi_{\beta,Q}'(s)  ds+\int^m_{\frac{m}{2}} \beta x_{\beta,Q}(s)  \Phi_{\beta,Q}'(s)  ds \\
\leq \int^{\frac{m}{2}}_0 \frac{C_{\xi}}{Sum(\xi(Q)-\xi(\Phi_{\beta,Q}(q)))} dq + \int^m_{\frac{m}{2}} \beta x_{\beta,Q}(s)  \Phi_{\beta,Q}'(s)  ds \\
\leq  {\frac{m}{2}} \frac{C_{\xi}}{Sum(\xi(Q)-\xi(\Phi_{\beta,Q}(\frac{m}{2})))}  + \int^m_{\frac{m}{2}} \beta x_{\beta,Q}(s)  \Phi_{\beta,Q}'(s)  ds.
\end{eqnarray*}
As $\lim_{\beta \rightarrow \infty} \Phi_{\beta,Q}(t) = \Phi_0(t)$ converges uniformly, we obtain that for $\beta$ sufficiently large, $\int^m_0 \beta x_{\beta,Q}(s) \Phi'_{\beta,Q}(s) ds$ is a uniformly bounded matrix.
\end{proof}

Combining the lemmas above, we can use the Helly's selection theorem combined with a diagonalization process to guarantee the vague convergence of $(\beta_n x_{\beta_n,Q})_{n \geq 1}$. As for  $\Phi_0(t)$ satisfying \eqref{eq:xpath}, we can guarantee the existence of the following limit :
\begin{eqnarray*}
\lim_{k \rightarrow \infty} \beta_{n_k} x_{\beta_{n_k},Q} \in \mathscr{N} \text{ vaguely on } [0,m) \\
\lim_{k \rightarrow \infty} \beta_{n_k} (Q-\Phi_{\beta_{n_k},Q}(q_{\beta_{n_k}})),\\
\lim_{k \rightarrow \infty} \int^m_0 \beta_{n_k}x_{\beta_{n_k},Q}(s) \Phi_{\beta_{n_k},Q}'(s)ds.
\end{eqnarray*}
Without loss of generality, we can assume all these convergences hold for the sequence $(\beta_n)_{n \geq 1}$ and denote
\begin{align}\label{eq:zerocons}
\alpha_0 &:= \lim_{n \rightarrow \infty} \beta_{n} x_{\beta_n,Q} \in \mathscr{N} \text{ vaguely on } [0,m) \nonumber \\
\Omega_0 &:= \lim_{n \rightarrow \infty} \beta_{n} (Q-\Phi_{\beta_n,Q} (q_{\beta_{n}})),\\
L_0 &:= \lim_{n \rightarrow \infty} \int^m_0 \beta_{n}x_{\beta_{n},Q}(s) \Phi_{\beta_n,Q}'(s) ds.\nonumber
\end{align}

\begin{lemma}\label{thm:thmx3} Let $(x_{\beta,Q},\Lambda_{\beta,Q},\Phi_{\beta,Q})$ be a minimizer of \eqref{eq:contPAr} and $\mu_{\beta}([0,t]) = x_{\beta,Q}(t)$. 
For any q in the support of $\mu_\beta$, we have that 
\begin{align} \label{41}
\Phi_{\beta,Q}(q)&=(\Lambda_{\beta,Q}-D^{x_{\beta,Q}}_\beta(0))^{-1} \vec{h}\vec{h}^T (\Lambda_{\beta,Q}-D^{x_{\beta,Q}}_\beta(0))^{-1}
\nonumber
\\&+ \int^q_0 (\Lambda_{\beta,Q}-D^{x_{\beta,Q}}_\beta(s))^{-1}(\xi''_\beta(\Phi_{\beta,Q}(s))\odot \Phi_{\beta,Q}'(s))(\Lambda_{\beta,Q}-D^{x_{\beta,Q}}_\beta(s))^{-1} ds
\end{align} 
where $D^{x_{\beta,Q}}_\beta(q)=\int^m_q x(s) \xi''_\beta(\Phi(s)) \odot \Phi_{\beta,Q}'(s) ds$.

\end{lemma}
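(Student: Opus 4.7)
The approach is to interpret \eqref{41} as the Euler--Lagrange equation for the matrix path $\Phi$ in the Parisi functional \eqref{eq:contPAr} at the minimizer, with $x_{\beta,Q}$ and $\Lambda_{\beta,Q}$ held fixed. Since only $\Phi_{\beta,Q}$ is being characterized, I would perform first-variation calculus in the third argument alone.

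First I would introduce admissible perturbations $\Phi_t = \Phi_{\beta,Q} + t\eta$, where $\eta:[0,m]\to\mathbb{R}^{m\times m}$ is smooth, symmetric, compactly supported inside an open subinterval of $\text{supp}\,\mu_\beta$ around a point $q_0\in\text{supp}\,\mu_\beta$, satisfies $\text{Trace}(\eta)\equiv 0$ there (so that the constraint $\text{Trace}(\Phi(t))=t$ required on $\text{supp}\,\mu_\beta$ by the class $\mathscr{M}$ is preserved), and vanishes at $t=0$ and $t=m$. For $|t|$ small, $\Phi_t$ remains in $\mathscr{M}$, and minimality of $(x_{\beta,Q},\Lambda_{\beta,Q},\Phi_{\beta,Q})$ gives
\[
\frac{d}{dt}\mathscr{P}_{\beta,Q}(x_{\beta,Q},\Lambda_{\beta,Q},\Phi_t)\Big|_{t=0}=0.
\]

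Next I would compute this variation explicitly using the two matrix identities
\[
\delta (\Lambda-D(q))^{-1}=(\Lambda-D(q))^{-1}\,\delta D(q)\,(\Lambda-D(q))^{-1},
\]
\[
\delta D(q)=\int_q^m x_{\beta,Q}(s)\bigl[\delta\xi_\beta''(\Phi(s))\odot\Phi'(s)+\xi_\beta''(\Phi(s))\odot\eta'(s)\bigr]ds,
\]
where $\delta\xi_\beta''(\Phi)$ is the entrywise third derivative of $\xi_\beta$ applied to $\Phi$, multiplied entrywise by $\eta$. Substituting these into each of the three $\Phi$-dependent pieces of $\mathscr{P}_{\beta,Q}$, swapping orders of integration by Fubini, integrating by parts to eliminate $\eta'$, and grouping like terms, the vanishing first variation can be rewritten as
\[
\int_0^m \bigl\langle E(q),\eta(q)\bigr\rangle\,d\mu_\beta(q)=0
\]
for every admissible $\eta$, where $E(q)$ is an explicit matrix built from $(\Lambda_{\beta,Q}-D^{x_{\beta,Q}}_\beta(q))^{-1}$, $\xi_\beta''(\Phi)\odot\Phi'$, $\Phi$, and the boundary contribution $(\Lambda_{\beta,Q}-D^{x_{\beta,Q}}_\beta(0))^{-1}\vec{h}\vec{h}^T(\Lambda_{\beta,Q}-D^{x_{\beta,Q}}_\beta(0))^{-1}$ inherited from the external-field term in $\mathscr{P}_{\beta,Q}$.

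By the arbitrariness of $\eta$ (modulo the trace-zero condition, which contributes a Lagrange multiplier that is a scalar multiple of the identity matrix), the kernel $E(q)$ vanishes up to such a scalar on $\text{supp}\,\mu_\beta$; matching traces on both sides absorbs the multiplier and yields the pointwise matrix ODE
\[
\Phi'(q)=(\Lambda_{\beta,Q}-D^{x_{\beta,Q}}_\beta(q))^{-1}\bigl[\xi_\beta''(\Phi(q))\odot\Phi'(q)\bigr](\Lambda_{\beta,Q}-D^{x_{\beta,Q}}_\beta(q))^{-1}
\]
on $\text{supp}\,\mu_\beta$. Integrating this identity from $0$ to $q$, with the base term $(\Lambda_{\beta,Q}-D^{x_{\beta,Q}}_\beta(0))^{-1}\vec{h}\vec{h}^T(\Lambda_{\beta,Q}-D^{x_{\beta,Q}}_\beta(0))^{-1}$ supplied by the $\vec{h}\vec{h}^T$ contribution, produces \eqref{41}. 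The main obstacle is the matrix bookkeeping: conjugation by $(\Lambda-D)^{-1}$ does not commute with the Hadamard products, the nonlocal dependence of $D^{x_{\beta,Q}}_\beta(q)$ on $\Phi$ over $[q,m]$ forces repeated Fubini swaps and integration-by-parts steps, and one must verify that the trace-zero restriction on $\eta$ does not weaken the conclusion, which is handled via the Lagrange multiplier argument above.
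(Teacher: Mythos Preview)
Your approach varies the matrix path $\Phi$ in $\mathscr{P}_{\beta,Q}$, whereas the paper varies the distribution function $x$. These are genuinely different routes, and the paper's choice is not incidental: the dependence of $\mathscr{P}_{\beta,Q}$ on $x$ through $D^{x}_\beta(q)=\int_q^m x(s)\xi''_\beta(\Phi(s))\odot\Phi'(s)\,ds$ is \emph{linear}, so the first variation in $x$ is a short computation. The paper writes $z=(1-\epsilon)x_{\beta,Q}+\epsilon y$, obtains $\partial_\epsilon\mathscr{P}_\beta|_{\epsilon=0}=\int_0^{\hat T}(y-x_{\beta,Q})\langle\Gamma,\xi''(\Phi)\odot\Phi'\rangle\,dt\ge 0$ with
\[
\Gamma(t)=(\Lambda-D(0))^{-1}\vec h\vec h^{T}(\Lambda-D(0))^{-1}+\int_0^t(\Lambda-D(s))^{-1}(\xi''_\beta(\Phi)\odot\Phi')(\Lambda-D(s))^{-1}\,ds-\Phi(t),
\]
and then concludes $\Gamma(t)=0$ on $\mathrm{supp}\,\mu_\beta$ directly. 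No integration step is needed: the quantity $\Gamma$ already \emph{is} the difference of the two sides of \eqref{41}.

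Your plan has a real gap at the final step. Even granting that the variation in $\Phi$ produces the pointwise identity
\[
\Phi'(q)=(\Lambda-D(q))^{-1}\bigl[\xi''_\beta(\Phi(q))\odot\Phi'(q)\bigr](\Lambda-D(q))^{-1}
\]
for $q\in\mathrm{supp}\,\mu_\beta$, integrating this from $0$ to $q$ does \emph{not} recover \eqref{41} unless the identity holds on all of $[0,q]$. When $\mathrm{supp}\,\mu_\beta$ has gaps in $[0,q]$ (e.g.\ a finite-RSB measure), knowing $\Phi'=F'$ only on a measure-zero set tells you nothing about $\Phi(q)-F(q)$. The paper's approach sidesteps this entirely because $\Gamma(t)=0$ is already the integrated statement. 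A secondary issue: your perturbations are ``compactly supported inside an open subinterval of $\mathrm{supp}\,\mu_\beta$'', but $\mathrm{supp}\,\mu_\beta$ need have no interior, so this class of $\eta$ may be empty; the paper instead exploits the freedom to modify $\Phi'(t)$ off $\mathrm{supp}\,\mu_\beta$ (without changing the value of the functional) to pass from $\langle\Gamma(t),\xi''(\Phi(t))\odot\Phi'(t)\rangle=0$ to $\Gamma(t)=0$ at isolated support points.
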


\begin{proof}[Proof of Lemma \ref{thm:thmx3}]

Consider arbitrarily probability measure $\mu$ with distribution function $y$, and set $z(t):= (1-\epsilon) x_{\beta,Q}(t)+\epsilon y(t)$. Then $y(t)-x_{\beta,Q}(t)=\int^t_0 d(\mu-\mu_\beta)(s)$. By a straight-forward computation, we obtain that, 
\begin{equation*}
\partial_\epsilon \mathscr{P}_\beta(\Lambda_{\beta,Q},z, \Phi_{\beta,Q})\bigg|_{\epsilon=0}= \int ^{\hat{T}}_0 (y(t)-x_{\beta,Q}(t)) \left \langle \Gamma(t), \xi''(\Phi_{\beta,Q}(t)) \odot \Phi_{\beta,Q}'(t) \right \rangle dt \geq 0,
\end{equation*}
where 
\begin{align*}
\Gamma(t)&=(\Lambda_{\beta,Q}-D^{x_{\beta,Q}}_\beta(0))^{-1} \vec{h}\vec{h}^T (\Lambda_{\beta,Q}-D^{x_{\beta,Q}}_\beta(0))^{-1}\\&+ \int^t_0 (\Lambda_{\beta,Q}-D^{x_{\beta,Q}}_\beta(s))^{-1}(\xi''_\beta(\Phi_{\beta,Q}(s))\odot \Phi_{\beta,Q}'(s))(\Lambda_{\beta,Q}-D^{x_{\beta,Q}}_\beta(s))^{-1} ds-\Phi_{\beta,Q}(t).
\end{align*}

By Fubini's theorem,
\begin{equation*}
\int^m_0 \int^m_s \left \langle \Gamma(t), \xi''(\Phi_{\beta,Q}(t)) \odot \Phi_{\beta,Q}'(t) \right \rangle dt d \mu(s) \geq \int^m_0 \int^m_s \left \langle \Gamma(t), \xi''(\Phi_{\beta,Q}(t)) \odot \Phi_{\beta,Q}'(t) \right \rangle dt d \mu_\beta(s)
\end{equation*} 
which implies that 
\begin{equation*}
\int^m_0 \bar{\Gamma}(s) d \mu(s) \geq \int^m_0  \bar{\Gamma}(s) d \mu_\beta(s)
\end{equation*} 
where $\bar{\Gamma}(s):= \int^m_s \left \langle \Gamma(t), \xi''(\Phi_{\beta,Q}(t)) \odot \Phi_{\beta,Q}'(t) \right \rangle ds$.
Since the inequality holds for all $\mu$, this is equivalent to say that 
\begin{equation*}
\bar{\Gamma}(t) \geq \int^m_0 \bar{\Gamma}(s) d \mu_\beta (s)
\end{equation*}
for all $t \in [0,m]$ and equality holds for every point in supp $\mu_\beta$. Note that if $s \in \text{ supp } \mu_\beta \cap (0,m)$, we obtain
\begin{equation*}
\frac{d}{dt}\bar{\Gamma}(t) = - \left \langle \Gamma(t), \xi''(\Phi_{\beta,Q}(t)) \odot \Phi_{\beta,Q}'(t) \right.\rangle =0.
\end{equation*}

If $t$ is an isolated point in supp $\mu_\beta$, then $\Phi'(t)$ can be any positive semidefinite matrix without changing the value of $\mathscr{C}$, which implies that $\Gamma(t)=0$ and thus \eqref{41} holds.   
Moreover, if $t$ is not an isolated point, then as $\Phi_{\beta,Q}(t)$ is a continuously differentiable matrix path, we can derive the same conclusion that $\Gamma(t)=0$ by continuity. 
\end{proof}

\begin{lemma}\label{thm:thmx4}
For any q in the support of $\mu_\beta$, we have that 
\begin{equation}\label{parisi1}
\Lambda_{\beta}-D^{x_{\beta}}_\beta(q) =\left ( \int^m_q x_{\beta,Q}(s) \Phi_{\beta,Q}'(s) ds \right)^{-1}.
\end{equation}
\end{lemma}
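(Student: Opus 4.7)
The strategy is to combine Lemma \ref{thm:thmx3} with the first-order optimality of the Parisi functional \eqref{eq:contPAr} in the matrix parameter $\Lambda$. Abbreviate $M(s):=\Lambda_\beta-D^{x_\beta}_\beta(s)$, so the target identity is $M(q)^{-1}=\int_q^m x_\beta(s)\Phi'_\beta(s)\,ds$ for $q\in\operatorname{supp}\mu_\beta$. Differentiating $\mathscr{P}_{\beta,Q}$ in $\Lambda$ (using $\partial_\Lambda\langle A,M(s)^{-1}\rangle=-M(s)^{-1}AM(s)^{-1}$ for symmetric $A$, together with $\partial_\Lambda(\langle\Lambda,Q\rangle-\log|\Lambda|)=Q-\Lambda^{-1}$) and setting the result to zero gives
\[
\Lambda_\beta^{-1} \;=\; Q - M(0)^{-1}\vec{h}\vec{h}^T M(0)^{-1} - \int_0^m M(s)^{-1}\bigl(\xi''_\beta(\Phi_\beta(s))\odot\Phi'_\beta(s)\bigr)M(s)^{-1}\,ds.
\]
Subtracting this from the integral formula of Lemma \ref{thm:thmx3} at $q\in\operatorname{supp}\mu_\beta$ yields the key identity
\[
Q-\Phi_\beta(q)-\Lambda_\beta^{-1} \;=\; \int_q^m M(s)^{-1}\bigl(\xi''_\beta(\Phi_\beta(s))\odot\Phi'_\beta(s)\bigr)M(s)^{-1}\,ds.
\]

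Next I would anchor the conclusion at the right endpoint $t_x:=t_{x_\beta}$ of the support. For $s\in[t_x,m]$ one has $x_\beta(s)\equiv 1$, hence $M'(s)=\xi''_\beta(\Phi_\beta(s))\odot\Phi'_\beta(s)$ and the integrand above equals $M^{-1}M'M^{-1}=-(M^{-1})'$. Integrating and using $M(m)=\Lambda_\beta$ gives $\int_{t_x}^m M^{-1}(\xi''_\beta(\Phi_\beta)\odot\Phi'_\beta)M^{-1}\,ds=M(t_x)^{-1}-\Lambda_\beta^{-1}$, so the key identity at $q=t_x$ collapses to $M(t_x)^{-1}=Q-\Phi_\beta(t_x)=\int_{t_x}^m x_\beta(s)\Phi'_\beta(s)\,ds$, which is \eqref{parisi1} at the anchor point. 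If $t_x$ itself fails to be a mass point, one simply takes a sequence of supp points accumulating at $t_x$ and uses continuity of $\Phi_\beta$ and $M$ inherited from $\Phi_\beta\in\mathscr{M}$.

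For general $q\in\operatorname{supp}\mu_\beta$ I would establish the cocycle $M(q)^{-1}-M(t_x)^{-1}=\int_q^{t_x}x_\beta(s)\Phi'_\beta(s)\,ds$ by decomposing $[q,t_x]$ into its intersection with $\operatorname{supp}\mu_\beta$ and the complementary open gaps. Differentiating Lemma \ref{thm:thmx3} on the support yields $\Phi'_\beta(s)=M(s)^{-1}(\xi''_\beta(\Phi_\beta(s))\odot\Phi'_\beta(s))M(s)^{-1}$, hence $-(M^{-1})'(s)=x_\beta(s)\Phi'_\beta(s)$ there, which integrates to the identity on each support piece. On a gap $(a,b)$ where $x_\beta\equiv c$, subtracting Lemma \ref{thm:thmx3} at the endpoints $a,b\in\overline{\operatorname{supp}\mu_\beta}$ gives $\Phi_\beta(b)-\Phi_\beta(a)=\int_a^b M^{-1}(\xi''_\beta(\Phi_\beta)\odot\Phi'_\beta)M^{-1}\,ds$; since on $[a,b]$ one has $M'(s)=c\,\xi''_\beta(\Phi_\beta(s))\odot\Phi'_\beta(s)$, the right-hand side equals $(M(a)^{-1}-M(b)^{-1})/c$, so $M(a)^{-1}-M(b)^{-1}=c(\Phi_\beta(b)-\Phi_\beta(a))=\int_a^b x_\beta(s)\Phi'_\beta(s)\,ds$, matching the support case. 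Telescoping the left-hand side and using additivity of the integral on the right then yields the cocycle, and combining with $M(t_x)^{-1}=\int_{t_x}^m x_\beta(s)\Phi'_\beta(s)\,ds$ from the anchor step completes the proof. The main technical wrinkle I anticipate is executing this decomposition cleanly when $\operatorname{supp}\mu_\beta$ has intricate structure (for instance a Cantor-like set), making sure Lemma \ref{thm:thmx3} extends to the gap endpoints by continuity and that the telescoping over countably many gaps remains legitimate.
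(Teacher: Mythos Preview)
Your argument is correct and takes a genuinely different route from the paper. The paper works through the discrete $r$-step Parisi and Crisanti--Sommers functionals: it writes down the critical-point equations $\partial_{Q_p}\mathscr{P}_r=0$, $\partial_{Q_p}\mathscr{C}_r=0$, and $\partial_{\Lambda}\mathscr{P}_r=0$, combines them to obtain the finite recursion $\Lambda_{p+1}-\Lambda_p=D_{p+1}^{-1}-D_p^{-1}$ with the anchor $\Lambda_{r-1}=D_{r-1}^{-1}$, deduces $\Lambda_p=D_p^{-1}$ for all $p$, and then passes to the limit $r\to\infty$. You instead stay entirely in the continuous picture: you take the $\Lambda$-gradient of $\mathscr{P}_{\beta,Q}$, subtract it from Lemma~\ref{thm:thmx3}, anchor at $t_x$, and then propagate the identity leftwards by differentiating Lemma~\ref{thm:thmx3} on the support and exploiting $M'=c\,\xi''_\beta(\Phi_\beta)\odot\Phi'_\beta$ on each gap. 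This is more self-contained (no appeal to the discrete Crisanti--Sommers critical points, only to the Parisi side) and arguably more conceptual; the paper's route buys algebraic simplicity and sidesteps the support-structure issue you flag. Regarding that wrinkle: since $\Gamma$ from Lemma~\ref{thm:thmx3} is $C^1$ and vanishes on $\operatorname{supp}\mu_\beta$, difference quotients along the support give $\Gamma'=0$ at every accumulation point, so your ODE $-(M^{-1})'=x_\beta\Phi'_\beta$ holds Lebesgue-a.e.\ on the support; together with the gap identity $H(b)=H(a)$ and the integral decomposition $H(t_x)-H(q)=\int_{q}^{t_x}H'\,ds$, this handles Cantor-like supports without difficulty.
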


\begin{proof}[Proof of Lemma \ref{thm:thmx4}]

Recall the $r$ step discretization of the Parisi functional given by \eqref{eq:discreteParisi}.
For $r \geq 1$, denote by $\mathscr{M}_r$ the space of all step functions $x \in \mathscr{N}$ with at most $r$ jumps and by $\mathscr{M}_r'$ the space of all $(x,\Lambda,Q)$ with $x \in \mathscr{M}_r$ and $\Lambda \in \mathbb{S}^m_+$ satisfying $\Lambda > \int^m_0 \beta x(s) \xi''_\beta(\Phi(s)) \odot \Phi'(s) ds$.

Let $({x_r}, \Lambda^r,{Q_{r}})$  be the minimizer of $\mathscr{P}_\beta$ restricted to $\mathscr{M}_r$. 
Based on the critical point of the Parisi formula, i.e., for the minimizer of the $r$ step discretization, $\partial_{Q_p} \mathscr{P}_r=0$ and $\partial_{Q_p} \mathscr{C}_r=0$, we obtain the following equation:
\begin{equation*}\label{cp1}
\Lambda_1^{-1}(\vec{h}\vec{h}^T+\xi'(Q_1))\Lambda_1^{-1}+\Sigma_{1 \leq k \leq p-1} \frac{1}{x_k}(\Lambda_k^{-1}-\Lambda_{k+1}^{-1})=Q_p 
\end{equation*}
\begin{equation} \label{cp2}
-\vec{h}\vec{h}^T+D_1^{-1}Q_1D_1^{-1}+\sum_{1 \leq k \leq p-1} \frac{1}{x_k}(D_{k+1}^{-1}-D_k^{-1}) =\xi'(Q_p) \text{ for } 2 \leq p \leq r-1
\end{equation}
and for $p=1$,
\begin{equation*}
\Lambda_1^{-1}(\vec{h}\vec{h}^T+\xi'(Q_1)) \Lambda_1^{-1} = Q_1, \quad
\xi'(Q_1)=-\vec{h}\vec{h}^T+D_1^{-1}Q_1D_1^{-1}.
\end{equation*}
Moreover, based on the extremality over $\Lambda^k$, we obtain
\begin{equation*}
- \Lambda_1^{-1} (\vec{h} \vec{h}^T + \xi'(Q_1)) \Lambda_1^{-1} +Q-(\Lambda^k)^{-1}+\Sigma_{1 \leq p \leq r-1} \frac{1}{x_p} (\Lambda_{p+1}^{-1}-\Lambda_p^{-1})=0.
\end{equation*}

Combining with \eqref{cp1} for $p=r-1$ and set $D^{x,p}_{\beta,r}=\Sigma_{p \leq k \leq r-1} x_k (\xi'(Q_{k+1})- \xi'(Q_k)) $ to be the discrete form of $D^x_{\beta,Q}(q)$ in $r-$step discretization, then we get the following relation:
\begin{equation*}
\Lambda_{r-1}=\Lambda-D^{x,r-1}_{\beta,r}=(Q-Q_{r-1})^{-1}.
\end{equation*}

Relation \eqref{cp2} also implies that $x_k(\xi'(Q_{k+1})-\xi'(Q_k))=D^{-1}_{k+1}-D^{-1}_k$ for $1 \leq k <r-1.$ Hence based on the equations above, we get
\begin{eqnarray*}
\Lambda_{p+1}-\Lambda_p&=& x_p(\xi_\beta'(Q_{p+1})-\xi_\beta'(Q_p)) \\
&=&D^{-1}_{p+1}-D_{p}^{-1}
\end{eqnarray*}
Since $D_{r-1}=Q -Q_{r-1}=\Lambda_{r-1}^{-1}$, we then get $\Lambda_p=D^{-1}_{p}$ for $1 \leq p \leq r-1$.

Then $(x_r,\Lambda^r,Q_r)$ satisfies $\Lambda^n-\Lambda(q)=(\int^m_q x_r(s) \Phi'(s) ds)^{-1}$ for all $q$ in the support of the probability measure $\mu_r$ induced by $x_r$. By the uniqueness of the minimizer $(x_r,\Lambda^r)$, we may pass to a subsequence of $(x_r,\Lambda^r)$ such that its limit equals $(x_\beta, \Lambda_\beta,Q_\beta)$. Thus we have the desired conclusion.
\end{proof}

Recall the constants defined in \eqref{eq:zerocons}.

\begin{lemma}\label{thm:thmx5}
We have $0<\Omega_{0}<\infty$ and for any $\Phi$ satisfying \eqref{eq:xpath}, $L_{0} >\int^m_0 \alpha_0(s) \Phi_0'(s) ds$.
\end{lemma}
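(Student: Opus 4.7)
The plan is to decompose the statement into three independent claims: $\Omega_0$ is a bounded positive semi-definite matrix, the algebraic identity $L_0 = \Omega_0 + \int_0^m \alpha_0(s)\Phi_0'(s)\,ds$ holds, and $\Omega_0$ is in fact strictly positive definite. Once these are established the inequality $L_0 > \int_0^m \alpha_0(s)\Phi_0'(s)\,ds$ follows by subtraction.

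For finiteness $\Omega_0 < \infty$, the proof of Lemma \ref{thm:thmx2} already does the work: the mean value estimate there combined with the arbitrariness of the test covariance $\zeta$ shows that $\beta(Q - \Phi_{\beta,Q}(q_\beta))$ is a uniformly bounded matrix, so its subsequential limit $\Omega_0$ is bounded too. The algebraic identity comes from the splitting
\[
\int_0^m \beta_n x_{\beta_n,Q}(s)\Phi_{\beta_n,Q}'(s)\,ds = \int_0^{q_{\beta_n}} \beta_n x_{\beta_n,Q}(s)\Phi_{\beta_n,Q}'(s)\,ds + \beta_n(Q - \Phi_{\beta_n,Q}(q_{\beta_n})),
\]
using that $x_{\beta_n,Q} \equiv 1$ on $[q_{\beta_n},m]$. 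Letting $n \to \infty$ and combining the vague convergence $\beta_n x_{\beta_n,Q} \to \alpha_0$ on $[0,m)$, the uniform convergence of $\Phi_{\beta_n,Q}'$, and $q_{\beta_n} \to m$ from Lemma \ref{thm:thmx2}, the first integral matches $\int_0^m \alpha_0\Phi_0'\,ds$ while the second tends to $\Omega_0$ by definition.

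For the strict positivity $\Omega_0 > 0$ I would apply Lemma \ref{thm:thmx4} at both $q = q_\beta$ and at a point of $\mathrm{supp}\,\mu_\beta$ tending to its infimum, and subtract the resulting expressions to obtain the finite-$\beta$ identity
\[
(Q - \Phi_{\beta,Q}(q_\beta))^{-1} = \Big(\int_0^m x_{\beta,Q}(s)\Phi_{\beta,Q}'(s)\,ds\Big)^{-1} + \int_0^{q_\beta} x_{\beta,Q}(s)\,\xi_\beta''(\Phi_{\beta,Q}(s)) \odot \Phi_{\beta,Q}'(s)\,ds.
\]
Dividing by $\beta$ and using $\xi_\beta'' = \beta^2\xi''$, both sides remain bounded precisely when $L_0$ is non-singular, and passing to the limit yields the closed relation
\[
\Omega_0^{-1} = L_0^{-1} + \int_0^m \alpha_0(s)\,\xi''(\Phi_0(s)) \odot \Phi_0'(s)\,ds,
\]
coupled with $L_0 = \Omega_0 + \int_0^m \alpha_0 \Phi_0'\,ds$. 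The system then forces $\Omega_0$ to be invertible: if $\Omega_0 v = 0$ for some nonzero $v$, then $L_0 v = 0$ and $\int_0^m \alpha_0(s)\Phi_0'(s) v\,ds = 0$, so that $\Phi_0'(s) v = 0$ on the support of $\alpha_0$, which, in conjunction with $\Phi_0(m) = Q_\infty$ and the strict non-degeneracy $|Q - \Phi(t_x)| > 0$ from Theorem \ref{eq:optimizers}, leads to a contradiction.

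The main obstacle is executing this last step rigorously in the matrix setting, since the formal fixed-point identity $\Omega_0^{-1} = L_0^{-1} + M$ can, a priori, be satisfied by degenerate shared kernels of $\Omega_0$, $L_0$, and $\int_0^m \alpha_0 \Phi_0'\,ds$. As a robust backup I would verify that $\mathscr{C}_{\beta,Q_\infty}(x_\beta,\Phi_\beta)/\beta$ converges to $\mathscr{C}(L_0,\alpha_0,\Phi_0)$; since the left-hand side tends to the finite quantity $GSE$, and the integrand $\langle (L_0 - \int_0^t \alpha_0\Phi_0'\,ds)^{-1}, \Phi_0'(t)\rangle$ fails to be integrable as $t \uparrow m$ whenever $\Omega_0 = 0$ and $\alpha_0$, $\Phi_0'$ are non-degenerate near $m$, finiteness of the limit forces $\Omega_0 > 0$, completing the proof.
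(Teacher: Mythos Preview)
Your treatment of $\Omega_0<\infty$ and of the splitting
\[
\int_0^m \beta_n x_{\beta_n,Q}\Phi_{\beta_n,Q}'\,ds=\int_0^{q_{\beta_n}}\beta_n x_{\beta_n,Q}\Phi_{\beta_n,Q}'\,ds+\beta_n\bigl(Q-\Phi_{\beta_n,Q}(q_{\beta_n})\bigr)
\]
matches the paper's argument essentially verbatim; the paper passes to the limit only along $[0,q]$ for fixed $q<m$ and then lets $q\uparrow m$, obtaining the inequality $L_0\geq\int_0^m\alpha_0\Phi_0'\,ds+\Omega_0$, which is all that is needed.

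Where you diverge is the proof of $\Omega_0>0$. Your route through Lemma~\ref{thm:thmx4} alone tries to extract the relation $\Omega_0^{-1}=L_0^{-1}+\int_0^m\alpha_0\xi''(\Phi_0)\odot\Phi_0'\,ds$ and then run a kernel argument, but as you yourself note, this is circular: you cannot pass to the limit in the inverses without already knowing $\Omega_0$ and $L_0$ are nonsingular, and a shared null space of $\Omega_0$, $L_0$, and $\int_0^m\alpha_0\Phi_0'\,ds$ is not ruled out by anything you have written. Your backup via non-integrability of $\langle(L_0-\int_0^t\alpha_0\Phi_0')^{-1},\Phi_0'\rangle$ has the same defect in a different guise: if the degeneracy of $L_0-\int_0^t\alpha_0\Phi_0'$ lies in the kernel of $\Phi_0'(t)$ near $m$, the pairing stays finite.

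The paper avoids this entirely by bringing in Lemma~\ref{thm:thmx3}, which you do not use. From Lemma~\ref{thm:thmx4} at $q=q_{\beta_n}$ one gets $\Lambda_{\beta_n}/\beta_n\to\Omega_0^{-1}$; if $\Omega_0=0$ this forces $\beta_n^{-1}(\Lambda_{\beta_n}-D_{\beta_n}^{x_{\beta_n}}(q))\to\infty$ uniformly in $q$. Plugging into the representation \eqref{41} for $\Phi_{\beta_n,Q}(q_{\beta_n})$ then drives the right-hand side to $0$, while the left-hand side converges to $Q\neq 0$. This contradiction uses the extra structural identity \eqref{41} coming from optimality in the $x$-variable, and sidesteps the kernel issue that blocks your approach.
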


\begin{proof}[Proof of Lemma \ref{thm:thmx5}]

Assume by contradiction that $\Omega_0=0$.
We first notice that the equation \eqref{parisi1} for $q=q_{\beta_n}$ reads as
\begin{equation*}
\Lambda_{\beta_n}=\xi'(Q)-\xi'(\Phi_{\beta_n}(q_{\beta_n}))+( Q- \Phi_{\beta_n}(q_{\beta_n})  )^{-1},
\end{equation*}
hence
\begin{equation*}
\lim_{n \rightarrow \infty} \frac{\Lambda_{\beta_n}}{\beta_n}=\Omega_0^{-1}=\infty.
\end{equation*}
Combining with the fact that $D^{x_{\beta_n}}_{\beta_n}(q)$ is bounded for any $q \in [0,m]$,
we obtain $[{\beta_n}^{-1}(\Lambda_{\beta_n}-D^{x_{\beta_n}}_{\beta_n}(q))]^{-1} $ converges to zero uniformly on $[0,m]$.

By \eqref{41}, we can reach a contradiction as follows,
\begin{eqnarray*}
Q &=&\lim_{n \rightarrow \infty}\Phi_{\beta_n}(q_{\beta_n})\\
&=&\lim_{n \rightarrow \infty} [{\beta_n}^{-1}(\Lambda_{\beta_n}-D^{x_{\beta_n}}_{\beta_n}(0))]^{-1} \vec{h}\vec{h}^T [{\beta_n}^{-1} (\Lambda_{\beta_n}-D^{x_{\beta_n}}_{\beta_n}(0))]^{-1}\\
&&+ \int^{q_{\beta_n}}_0 [{\beta_n}^{-1} (\Lambda_{\beta_n}-D^{x_{\beta_n}}_{\beta_n}(s))]^{-1}(\xi''_{\beta_n}(\Phi_{\beta_n,Q}(s))\odot \Phi_{\beta_n,Q}'(s))[{\beta_n}^{-1} (\Lambda_{\beta_n}-D^{x_{\beta_n}}_{\beta_n}(s))]^{-1} ds.
\end{eqnarray*} 
In conclusion, $0 < \Omega_0.$ A similar argument shows that $\Omega_{0}<\infty$.

Next, note that from \eqref{qbeta},
\begin{equation*}
\lim_{k \rightarrow \infty} q_{\beta_k} =m.
\end{equation*}
Therefore, for any fixed $q \in (0,m)$
\begin{eqnarray*}
\int^m_0 \beta_n x_{\beta_n,Q}(s) \Phi_{\beta_n,Q}'(s) ds &=& \int^{q_{\beta_n}}_0 \beta_n x_{\beta_n,Q}(s) \Phi_{\beta_n,Q}'(s)ds + \int ^m_{q_{\beta_n}} x_{\beta_n,Q}(s) \Phi_{\beta_n,Q}'(s) ds\\
&\geq&  \int^{q_{\beta_n}}_0 \beta_n x_{\beta_n}(s) \Phi'_0(s)ds+\beta_n (Q-\Phi_{\beta_n}(q_{\beta_n}))
\end{eqnarray*}
Then by the dominated convergence theorem, we obtain
\begin{equation*}
L_0 \geq \int ^q_0 \alpha_0(s) \Phi_0'(s) ds + \Omega_0.
\end{equation*}
Since this holds for all $q \in (0,m)$, by let $q$ tend to $m$, we obtain,
\begin{equation*}
L_0 \geq \int ^m_0 \alpha_0(s) \Phi_0'(s) ds + \Omega_0.
\end{equation*}
As $\Omega_0>0$, we get the desired conclusion.

\end{proof}

Now let us rewrite the Crisanti-Sommers functional \eqref{CSF} as follows:
\begin{eqnarray*}\label{NCSF}
 \mathscr{C}_{\beta,Q}(x,\Phi) &=& \frac{1}{2}\bigg[\langle \xi'_\beta(\Phi(m))+\vec{h}_\beta \vec{h}_\beta^T, \check{x}(m) \rangle -\int^m_0 \langle \check{\Phi}(t), \xi_\beta''(\Phi(t)) \odot \Phi'(t) \rangle dt  \nonumber \\
 &+& \int^{t_x}_0 \langle (\check{\Phi}(m)-\check{\Phi}(t))^{-1},\Phi'(t) \rangle dt + \log | \Phi(m)-\Phi(t_x)|\bigg]
\end{eqnarray*}
where $\check{\Phi}(t):= \int ^t_0 x(s) \Phi'(s) ds$.

\begin{proof}[Proof of Theorem \ref{thm:thmx0}] We will consider that case $Q=Q_{\infty}$ as the general case is similar. We set $(x_\beta,\Phi_\beta)$ as the minimizer under the constraint $Q=Q_\infty$. Now We start with the lower bound. 

For any fixed $\Phi$ satisfying \eqref{eq:xpath}, based on the definition of $L_0$ and $\alpha_0$, we obtain that 
\begin{eqnarray*}
L_0 = \lim_{\beta \rightarrow \infty} \int^m_0 \beta_n x_{\beta_n}(s) \Phi_{\beta_n}'(s) ds = \lim_{n \rightarrow \infty} \beta_n \check{\Phi}_{\beta_n}(m) \\
\lim_{n \rightarrow \infty} \beta_n \check{\Phi}_{\beta_n}(q)=\int^q_0 \alpha_0(s) \Phi'_0(s)ds, q \in [0,m).
\end{eqnarray*}
In addition, from Lemma \ref{thm:thmx5},
\begin{equation*}
\lim_{n \rightarrow \infty} \frac{\log | Q-\Phi_{\beta_n}(q_{\beta_n}) |}{\beta_n} = \lim_{n \rightarrow \infty} \frac{\log | \beta_n(Q-\Phi_{\beta_n}(q_{\beta_n})) |}{\beta_n}-m\frac{\log \beta_n}{\beta_n}=0.
\end{equation*}
Hence by applying  Fatou's lemma and the bounded convergence theorem, we obtain
\begin{eqnarray*}
GSE &=&  \lim_{n \rightarrow \infty} \frac{\mathscr{C}_{\beta_n} (x_{\beta_n},\Phi_{\beta_n})}{\beta_n} \\
        &\geq& \frac{1}{2}  \bigg[\langle \xi'(Q)+\vec{h}\vec{h}^T, \lim_{n \rightarrow \infty} \beta_n \check{\Phi}_{\beta_n}(m) \rangle -\int^m_0 \langle  \lim_{n \rightarrow \infty} \xi''(\Phi_{\beta_n}(t)) \odot \Phi_{\beta_n}'(t)  ,\lim_{n \rightarrow \infty} \beta_n \check{\Phi}_{\beta_n}(t)  \rangle dt  \nonumber \\
 &+& \int^{m}_0 \lim_{n \rightarrow \infty} \mathbbm{1}_{[0,q_{\beta_n}]} \langle (\beta_n (\check{\Phi}_{\beta_n}(m)-\check{\Phi}_{\beta_n}(t))^{-1},\Phi_{\beta_n}'(t) \rangle dt + \lim_{n \rightarrow \infty} \frac{\log | Q-\Phi_{\beta_n}(q_{\beta_n})|}{\beta_n} \bigg]\\
&=&  \mathscr{C}(L_0, \alpha_0, \Phi_0)
 \end{eqnarray*}
 which implies that 
 \begin{equation*}
 GSE \geq \inf_{(L,\alpha,\Phi) \in \mathscr{K}(Q)} \mathscr{C} (L,\alpha,\Phi).
 \end{equation*}

We now move the matching upper bound for $GSE$.
Consider any $(L,\alpha,\Phi) \in \mathscr{K}(Q)$. As $\alpha \in \mathscr{N}_0$,  there exists $c>0$ such that $\alpha(t)$ is constant for all $t \in [m-c,m]$. Then for any $[m-c_,m]$, we can set $\Phi(t)=\frac{t-m+c}{c}Q+\frac{m-t}{c}\Phi(m-c)$ without changing the value of $\mathscr{C}(L,\alpha,\Phi)$. 

As $Q- \Phi(m-c)$ is positive definite, then $\Phi(m-\frac{c}{2})=\frac{1}{2}(Q+\Phi(m-c))$ is positive definite. There exists $\epsilon$ sufficiently small such that $\frac{1}{2}(Q-\Phi(m-c))-\epsilon \Omega_1$ is still positive definite, where $\Omega=L- \int^m_0 \alpha(s) \Phi'(s) ds >0$ and $\Omega_1=\frac{\Omega}{\text{Trace}(\Omega)}$.

Take $v_\beta=m-\frac{1}{\beta} \text{Trace}(\Omega)$ and without loss of generality we can assume $v_\beta>m-\epsilon$ by choosing $\beta$ sufficiently large. Consider $y_\beta \in \mathscr{M}$ defined by $y_\beta(s)=1$ on $[v_\beta,m]$ and $y_\beta(s)=\min (\alpha(s)/\beta,1)$ on $[0,v_\beta)$.

Next, for each $v_\beta$, we customize $\Phi$ by $\Phi_{\beta}$ as follows,
 \begin{equation*} \Phi_{\beta}(t)= \left\{
\begin{array}{lcl}
\Phi(t) \qquad  \text{   for   } t \in [0,m-c] ,   \\
Q-(m-v_\beta) \cdot \Omega_1  \qquad \text{ for } t=v_\beta, \\
\frac{v_\beta-t}{v_\beta-m+c}\Phi(m-c)+\frac{t-m+c}{v_\beta-m+c}\Phi(v_\beta) \qquad \text{ for } t \in [m-c,v_\beta] \\
Q-(m-t) \cdot \Omega_1 \qquad \text{ for } t \in [v_\beta,m]
\end{array} \right. \end{equation*} 
Notice that $\lim_{\beta \rightarrow \infty} \Phi_{\beta}(t)=\Phi(t)$ and $\lim_{\beta \rightarrow \infty} \Phi'_\beta(t)=\Phi'(t)$ for $t \in [0,m].$

Based on the definition $y_\beta$, we notice that $(\beta y_\beta)$ converges vaguely to $\alpha$ on $[0,m)$ with
\begin{equation*}
\int^m_0 \beta y_\beta(s) \Phi'_\beta(s)ds = \beta(\Phi_\beta(m)-\Phi_\beta(v_\beta))+\int^{v_\beta}_0 \min(\alpha(s),\beta)\Phi_{\beta}'(s)ds \rightarrow \Omega + \int^m_0 \alpha(s) \Phi'(s) ds =L.
\end{equation*}

We claim that 
\begin{equation}\label{claim}
\lim_{\beta \rightarrow \infty} \frac{\mathscr{C}_{\beta,Q} (y_\beta,\Phi_{\beta})}{\beta} =\mathscr{C}(L,\alpha,\Phi).
\end{equation} 
If \eqref{claim} is valid, from the Crisanti-Sommers formula, we have
\begin{equation*}
GSE=\lim_{\beta \rightarrow \infty} \lim_{\epsilon \rightarrow 0} \lim_{N \rightarrow \infty} \frac{F^{\epsilon,Q_{\infty}}_N(\beta)}{\beta} = \lim_{\beta \rightarrow \infty} \inf_{x,\Phi} \frac{\mathscr{C}_{\beta,Q_{\infty}} (x,\Phi)}{\beta} \leq \lim_{\beta \rightarrow \infty} \frac{\mathscr{C}_{\beta,Q_{\infty}} (y_\beta,\Phi_{\beta})}{\beta} =\mathscr{C}(L,\alpha,\Phi).
\end{equation*}

Since this is true for any $\alpha$ and $L> \int^m_0 \alpha(s) \Phi'(s) ds$,
we get that 
\begin{equation*}
GSE \leq \inf_{(L,\alpha,\Phi) \in \mathscr{K}(Q)} \mathscr{C}(L,\alpha,\Phi).
\end{equation*}
Now it suffices for us to prove the claim \eqref{claim}.

Firstly, 
\begin{equation*}
\frac{\log | \Phi_{\beta}(m)- \Phi_{\beta}(v_\beta) |}{\beta} = \frac{  \log|\beta( \Phi_{\beta}(m)- \Phi_{\beta}(v_\beta) ) |}{\beta} - m \frac{ \log \beta}{\beta} \rightarrow 0.
\end{equation*}

Secondly,
\begin{eqnarray*}
\frac{1}{\beta}  \langle \xi_\beta'(Q)+\vec{h}_\beta \vec{h}_\beta^T,  \check{\Phi}_\beta(m) \rangle 
&=&  \langle \xi'(Q)+\vec{h} \vec{h}^T, \int^m_0 \beta y_\beta(t) \Phi_{\beta}'(t)  dt \rangle \\
&\rightarrow&  \langle \xi'(Q)+\vec{h}\vec{h}^T, L \rangle.
\end{eqnarray*}

Thirdly, since $\beta \check{\Phi}_\beta (q) \leq \int^m_0 \beta y_\beta (s) \Phi_{\beta}'(s)ds$ for $q \in [0,m]$ and $\int^m_0 \alpha(s) \Phi_{\beta}'(s) ds < L$, by bounded convergence theorem,
\begin{align*}
\frac{1}{\beta} \int^m_0 \langle \xi''(\Phi_{\beta}(t)) \odot \Phi_{\beta}'(t) , \check{\Phi}_\beta(t) \rangle dt 
&= \int^m_0 \langle \xi''(\Phi_{\beta}(t)) \odot \Phi_{\beta}'(t) , \int^t_0 \beta y_\beta(s) \Phi_{\beta}'(s) ds \rangle dt \\
&\rightarrow \int^m_0 \langle \xi''(\Phi(t)) \odot \Phi'(t), \int^t_0 \alpha(s) \Phi'(s) ds \rangle dt, \text{ as }  \beta \rightarrow \infty.
\end{align*}

Finally,
\begin{align*}
& \left | \int^{v_{\beta}}_0 \langle [\beta (\check{\Phi}_{\beta}(m)-\check{\Phi}_{\beta}(t))]^{-1}, \Phi_{\beta}'(t) \rangle dt -  \int^{v_\beta}_0 \langle (L-\int^t_0 \alpha(s) \Phi'(s) ds)^{-1}, \Phi'(t) \rangle dt \right | \\
&\leq  \left | \int^{v_{\beta}}_0 \langle [\beta (\check{\Phi}_{\beta}(m)-\check{\Phi}_{\beta}(t))]^{-1} -  (L-\int^t_0 \alpha(s) \Phi'(s) ds)^{-1}, \Phi_{\beta}'(t) \rangle dt  \right | \\
&- \left |  \int^{v_\beta}_0 \langle (L-\int^t_0 \alpha(s) \Phi'(s) ds)^{-1}, \Phi'(t) -\Phi_{\beta}'(t) \rangle dt \right |.
\end{align*}
As $L-\int^t_0 \alpha(s) \Phi'(s) ds > \Omega$ and therefore $(L-\int^t_0 \alpha(s) \Phi'(s) ds)^{-1} > \Omega^{-1}$, we obtain that the second term in the inequality above tends to 0 as $\beta \rightarrow \infty$.\\

Now it remains to deal with the first term. Since for $t \in [0,m]$, 
\[
\lim_{\beta \rightarrow \infty} \beta (\check{\Phi}_{\beta}(m)-\check{\Phi}_{\beta}(t)) = L - \int^t_0 \alpha(s) \Phi'(s) ds, 
\] 
for $\epsilon>0$ sufficiently small, there exists $\beta_0$ large enough, such that whenever $\beta>\beta_0$, 
\[
|| (\beta (\check{\Phi}_\beta(m)-\check{\Phi}_\beta(t)) )^{-1}- (L-\int^t_0 \alpha(s) \Phi'(s) ds)^{-1} ||_\infty < \epsilon.
\]
By the definition of $\Omega$, we also get $L-\int^t_0 \alpha(s) \Phi'(s) ds > \Omega$ and then $(L-\int^t_0 \alpha(s) \Phi'(s) ds)^{-1} > \Omega^{-1}$, for $t \in [0,m)$. Moreover, by the setting of $\Phi_{\beta}$ and $\Phi$, they are uniformly bounded, $i.e.$ there exists $M>0$ such that for all $t \in [0,m]$, $|| \Phi(t) ||_\infty < M$.

Based on the relations above, we obtain
\begin{align*}
M_\beta &:= \left | \int^{v_{\beta}}_0 \langle [\beta (\check{\Phi}_{\beta}(m)-\check{\Phi}_{\beta}(t))]^{-1} -  (L-\int^t_0 \alpha(s) \Phi'(s) ds)^{-1}, \Phi_{\beta}'(t) \rangle dt  \right | \\
&\leq  \left | \int^{m}_0 \langle \mathbbm{1}_{[0,v_\beta]}(t) [\beta (\check{\Phi}_{\beta}(m)-\check{\Phi}_{\beta}(t))]^{-1} (L-\beta \check{\Phi}_{\beta}(m))  (L-\int^t_0 \alpha(s) \Phi'(s) ds)^{-1}, \Phi_{\beta}'(t) \rangle dt  \right | \\
&+ \bigg | \int^{m}_0 \langle \mathbbm{1}_{[0,v_\beta]}(t) [\beta (\check{\Phi}_{\beta}(m)-\check{\Phi}_{\beta}(t))]^{-1} \\
&\quad \times(\int^t_0 \alpha(s) \Phi'(s) ds-\beta \check{\Phi}_{\beta}(t))])  (L-\int^t_0 \alpha(s) \Phi'(s) ds)^{-1}, \Phi_{\beta}'(t) \rangle dt  \bigg |
\end{align*}
Then by dominated convergence theorem, we get that $M_\beta \rightarrow 0$ as $\beta \rightarrow \infty$.
Theorem \ref{thm:thmx0} immediately follows. 
\end{proof}

\section{Properties of the Crisanti-Sommers functional at finite temperature}\label{Sec4}

In this section, we provide  some initial properties of the Crisanti-Sommers functional at finite temperature and their minimizers. Without confusion of notations, we absorb the $\beta$ into the model $\xi$ in this section. Our starting point is a critical point equation for points in the support of the Parisi measure.

\begin{theorem}\label{thm:thm4}
Let $(x,\Phi)$ be a minimizer of \eqref{CSF}. For $t \in \text{supp } \mu_\beta$, the following equation holds:
\begin{equation*}\label{eq:charac}
\xi'(\Phi(t))+\vec{h}\vec{h}^T=\int^t_0\hat{\Phi}(s)^{-1}\Phi'(s)\hat{\Phi}(s)^{-1}ds.
\end{equation*}

\end{theorem}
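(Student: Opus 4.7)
The plan is a first-order variational argument in $x$, directly analogous to the proof of Lemma \ref{thm:thmx3} but carried out on $\mathscr{C}_{\beta,Q}$ rather than $\mathscr{P}_{\beta,Q}$. For an arbitrary distribution function $y$ of a probability measure $\mu_y$ on $[0,m]$, I would set $z_\epsilon=(1-\epsilon)x+\epsilon y$. For $\epsilon$ small enough, $t_{z_\epsilon}<\hat{T}<m$ still holds with the same $\hat{T}$; since $\Phi$ is held fixed the log term in \eqref{CSF} contributes nothing to $\partial_\epsilon$. Minimality of $(x,\Phi)$ then yields $\partial_\epsilon\mathscr{C}_{\beta,Q}(z_\epsilon,\Phi)|_{\epsilon=0}\geq 0$, and the task is to unpack this inequality.

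Compute the surviving two terms. The bilinear term gives $\int_0^m(y(t)-x(t))\langle\xi'(\Phi(t))+\vec{h}\vec{h}^T,\Phi'(t)\rangle\,dt$. For the third term, $\partial_\epsilon\hat{\Phi}_{z_\epsilon}^{-1}|_{\epsilon=0}=-\hat{\Phi}^{-1}(\hat{\Phi}_y-\hat{\Phi})\hat{\Phi}^{-1}$ combined with the cyclic symmetry $\langle\hat{\Phi}^{-1}M\hat{\Phi}^{-1},\Phi'\rangle=\langle M,\hat{\Phi}^{-1}\Phi'\hat{\Phi}^{-1}\rangle$ produces $-\int_0^{\hat{T}}\langle\hat{\Phi}_y(t)-\hat{\Phi}(t),\hat{\Phi}(t)^{-1}\Phi'(t)\hat{\Phi}(t)^{-1}\rangle\,dt$. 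Expanding $\hat{\Phi}_y(t)-\hat{\Phi}(t)=\int_t^m(y(s)-x(s))\Phi'(s)\,ds$ and applying Fubini on $\{0\le t\le\hat{T},\,t\le s\le m\}$ collapses the whole derivative to
\[
\partial_\epsilon\mathscr{C}_{\beta,Q}|_{\epsilon=0}=\tfrac{1}{2}\int_0^m(y(s)-x(s))\,\langle A(s),\Phi'(s)\rangle\,ds,
\]
where $A(s):=\xi'(\Phi(s))+\vec{h}\vec{h}^T-\int_0^{\min(s,\hat{T})}\hat{\Phi}(t)^{-1}\Phi'(t)\hat{\Phi}(t)^{-1}\,dt$ and, crucially, $\min(s,\hat{T})=s$ throughout $\text{supp}\,\mu_\beta\subset[0,t_x]\subset[0,\hat{T}]$. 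One more integration by parts with $\bar K(s):=\int_s^m\langle A(u),\Phi'(u)\rangle\,du$ rewrites the variational inequality as $\int_0^m\bar K\,d(\mu_y-\mu_\beta)\ge 0$ for every probability $\mu_y$, equivalently $\mu_\beta$ minimizes $\mu\mapsto\int\bar K\,d\mu$ over probability measures. Hence $\bar K$ is constant on $\text{supp}\,\mu_\beta$, and differentiating yields $\langle A(t),\Phi'(t)\rangle=0$ at every $t$ in the interior of $\text{supp}\,\mu_\beta$.

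The main obstacle is the scalar-to-matrix upgrade, i.e.\ passing from $\langle A(t),\Phi'(t)\rangle=0$ to the stated matrix identity $A(t)=0$. The leverage comes from the enlarged class $\mathscr{M}$, in exactly the manner used at the end of the proof of Lemma \ref{thm:thmx3}: the trace constraint $\text{Trace}(\Phi(t))=t$ is imposed only on $\text{supp}\,\mu_\beta$, so at an isolated point of the support one may replace $\Phi'(t)$ by any positive semidefinite matrix without altering $\mathscr{C}_{\beta,Q}(x,\Phi)$, and orthogonality of $A(t)$ against every such direction then forces $A(t)=0$. Non-isolated points of $\text{supp}\,\mu_\beta$ follow by continuity of $A$ and $\Phi'$, guaranteed by the continuous differentiability of minimizers $\Phi\in\mathscr{M}$ recorded after Theorem \ref{eq:optimizers}. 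This establishes $A(t)=0$ for all $t\in\text{supp}\,\mu_\beta$, which is exactly the equation in the statement.
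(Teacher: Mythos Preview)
Your argument is correct at the level of rigor the paper itself maintains, but it takes a genuinely different variational route from the paper's own proof. The paper perturbs the \emph{path} $\Phi$ while holding $x$ fixed: it sets $\Theta(\epsilon,t)=(1-\epsilon)\Phi(t+\epsilon a(t))+\epsilon\Psi(t+\epsilon a(t))$ for an arbitrary $\Psi\in\mathscr{M}$ and a Lipschitz time-shift $a$, computes $\partial_\epsilon\mathscr{C}(x,\Theta)|_{\epsilon=0}$, and after an integration by parts obtains
\[
0\le \int_0^{\hat T}\Big\langle \int_0^t\hat\Phi(s)^{-1}\Phi'(s)\hat\Phi(s)^{-1}ds-\vec h\vec h^{T}-\xi'(\Phi(t)),\ \Psi(t)-\Phi(t)+a(t)\Phi'(t)\Big\rangle\,\mu_P(dt),
\]
so that the freedom in the \emph{matrix} direction $\Psi(t)-\Phi(t)$ (together with the scalar $a(t)$) forces the matrix identity at every support point in one shot. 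You instead perturb $x$ with $\Phi$ fixed, which is the manoeuvre of Lemma~\ref{thm:thmx3} transplanted to $\mathscr{C}_{\beta,Q}$; this yields only the scalar relation $\langle A(t),\Phi'(t)\rangle=0$, and you then upgrade to $A(t)=0$ by exploiting the fact that on the complement of $\text{supp}\,\mu_\beta$ the path $\Phi$ can be altered without changing $\mathscr{C}_{\beta,Q}(x,\Phi)$, so that at an isolated support point $\Phi'(t)$ may be reset to an arbitrary positive semidefinite matrix. This is legitimate (and the key check, that $A(t)$ itself is unchanged under such a modification, follows because on each gap $(a,b)$ with $x\equiv c$ one has $\int_a^b\hat\Phi^{-1}\Phi'\hat\Phi^{-1}\,dr=c^{-1}[\hat\Phi(b)^{-1}-\hat\Phi(a)^{-1}]$, depending only on endpoint data). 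The trade-off: the paper's $\Phi$-variation is more direct and delivers the matrix equation uniformly on the support, whereas your $x$-variation recycles the exact mechanism of Lemma~\ref{thm:thmx3} but needs the separate isolated-point/continuity upgrade; both arguments share the same informal ``by continuity'' step for non-isolated support points. One small technicality: your claim that $t_{z_\epsilon}<\hat T$ with the \emph{same} $\hat T$ requires $t_y<\hat T$; since $\mathscr{C}_{\beta,Q}$ is independent of the choice of $\hat T\in(t_x,m)$ you may simply fix $\hat T$ close to $m$, or restrict to such $y$, before differentiating.
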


\begin{proof}[Proof of Proposition \ref{thm:thm4}]

Let  $a(t)$ be any non-zero continuous function on $[0,m]$ satisfying $0 \leq t+a(t) \leq m$ for all $t \in [0,m]$ and $|a(t)-a(t')| \leq |t-t'|$ for all $t,t' \in [0,m]$. Assume that $a(0)=0$ and $a(t)=0$, for all $t \in [\hat{T},m]$. 

Set $\Theta(\epsilon,t)=(1-\epsilon)\Phi(t+a(t))+\epsilon \Psi(t+a(t))$, where $\Psi$ is an arbitrary matrix path in $\mathscr{M}$.
Since $(x,\Phi)$ is a minimizer of the Crisanti-Sommers functional
\begin{eqnarray}\label{eq:dap}
0&\leq& \partial_\epsilon \mathscr{C}(x,\Theta(\epsilon,t))\bigg|_{\epsilon=0}  \\&=& \int ^{\hat{T}}_0 \left \langle  \int ^t_0 \hat{\Phi}{(s)}^{-1}\Phi'(s)\hat{\Phi}(s)^{-1} ds-\vec{h}\vec{h}^T- \xi'(\Phi(t)),\Psi(t)-\Phi(t)+a(t) \Phi'(t) \right \rangle \mu_P(dt). \nonumber
\end{eqnarray}

Assume that the equality above holds (we provide the details of the computation of the derivative below). 
We now claim that \eqref{eq:dap} implies 
\begin{equation}\label{eq:(7)} 
\left \langle  \int ^t_0 \hat{\Phi}{(s)}^{-1}\Phi'(s)\hat{\Phi}(s)^{-1} ds-\vec{h}\vec{h}^T- \xi'(\Phi(t)),\Psi(t)-\Phi(t) \right \rangle
\end{equation}
must vanish  for all $t \neq 0$ and $t \in$ supp $\mu_P$.
Indeed, if \eqref{eq:(7)} doesn't vanish, we can always modify $\Psi(t)-\Phi(t)$ and $a(t)$ so that \eqref{eq:dap} is negative. 

 Now, we turn to the proof of \eqref{eq:dap}. By setting $\Theta(\epsilon,t):= (1-\epsilon) \Phi(t+\epsilon a(t))+\epsilon \Psi(t+\epsilon a(t))$, we start by calculating the differential of $\mathscr{C}$ at $\epsilon=0$:
\begin{align} \label{eq:123}
  2\frac{d}{d\epsilon} \mathscr{C} (x, \Theta)\bigg|_{\epsilon=0} &= \frac{d}{d\epsilon}\bigg[\int^n_0x(t) \langle \xi'(\Theta(t))+\vec{h}\vec{h}^T, \Theta'(t) \rangle dt  \nonumber \\ 
&+ \log|\Theta(n)-\Theta(\hat{T})|+ \int^{\hat{T}}_0 \langle \hat{\Theta}(t)^{-1},\Theta'(t) \rangle dt]\bigg ] \bigg |_{\epsilon=0}  \nonumber \\
&:=I+II+III.
 \end{align}
Next, we notice that 
\begin{equation}\label{eq:firstderH}
\frac{d}{d\epsilon} \Theta(t)\bigg|_{\epsilon=0}=\Psi(t)-\Phi(t)+a(t)\Phi'(t) 
\end{equation}
and 
\begin{equation}\label{eq:secondderH}
\frac{d}{d\epsilon} \Theta'(t)\bigg|_{\epsilon=0}=(\Psi(t)-\Phi(t)+a(t)\Phi'(t))'.
\end{equation}

Thus, combining \eqref{eq:123}, \eqref{eq:firstderH}, and \eqref{eq:secondderH}, we obtain
\begin{align}\label{eq:one}
I&=\int^{\hat{T}}_0x(t) \langle \xi''(\Phi(t) \odot \Phi'(t), \Psi(t)-\Phi(t)+a(t)\Phi'(t) \rangle dt \nonumber \\
     &+ \int^{\hat{T}}_0 x(t) \langle \xi'(\Phi(t))+\vec{h}\vec{h}^T,(\Psi(t)-\Phi(t)+a(t)\Phi'(t))' \rangle dt,
\end{align}
and
 \begin{align}\label{eq:one22}
 II&=\left \langle (\Phi(m)-\Phi(\hat{T}))^{-1}, \Psi(m)-\Phi(m)+a(m)\Phi'(m) \right \rangle \nonumber \\
    &- \left \langle (\Phi(m)-\Phi(\hat{T}))^{-1}, \Psi(\hat{T})-\Phi(\hat{T})+a(\hat{T})\Phi'(\hat{T}) \right \rangle=0,
    \end{align}
and, by integration by parts,
\begin{align}\label{eq:one223}
 III&=- \int^{\hat{T}}_0 \int^m_tx(s) \left \langle \hat{\Phi}(t)^{-1}\Phi'(t) \hat{\Phi}(t)^{-1}, ( \Psi(s)-\Phi(s)+a(s)\Phi'(s) )'\right \rangle ds dt \nonumber \\
    &+\int^{\hat{T}}_0 \left \langle \hat{\Phi}(t)^{-1},( \Psi(t)-\Phi(t)+a(t)\Phi'(t) )'  \right\rangle dt  \nonumber \\
  &=-\int^{\hat{T}}_0 x(t) \left \langle (\Psi(t)-\Phi(t)+a(t)\Phi'(t))',\int^t_0 \hat{\Phi}(t)^{-1} \Phi'(t) \hat{\Phi}(t)^{-1}ds\right \rangle dt \nonumber \\
   &-\int^{\hat{T}}_0 x(t) \left \langle \Psi(t)-\Phi(t)+a(t)\Phi'(t),\hat{\Phi}(t)^{-1}\Phi'(t)\hat{\Phi}(t)^{-1}\right \rangle dt.
\end{align}
Here, we use the fact that $\Psi(t)=\Phi(t)$ for $t \in [\hat{T},m]$ and $a(0)=0$ and $a(t)=0$, for all $t \in [\hat{T},m]$.

By adding \eqref{eq:one}, \eqref{eq:one22}, and \eqref{eq:one223}, and using \eqref{eq:123} we get
\begin{align*}
\frac{d}{d\epsilon} &\mathscr{C} (x, \Theta) \bigg|_{\epsilon=0}=\frac{1}{2}\bigg[\int^{\hat{T}}_0 x(t)\left\langle \Phi'(t) \odot \xi''(\Phi(t))-\hat{\Phi}(t)^{-1}\Phi'(t)\hat{\Phi}(t)^{-1},\Psi(t)-\Phi(t)+a(t)\Phi(t) \right\rangle dt\\
&+\int^{\hat{T}}_0 x(t)\left \langle \vec{h}\vec{h}^T+\xi'(\Phi(t))-\int^t_0\hat{\Phi}(s)^{-1}\Phi'(s)\hat{\Phi}(s)^{-1}ds,(\Psi(t)-\Phi(t)+a(t)\Phi'(t))'\right \rangle dt\bigg]\\
&=\frac{1}{2}\bigg[\int^{\hat{T}}_0 x(t)(\left\langle \vec{h}\vec{h}^T+\xi'(\Phi(t))-\int^t_0\hat{\Phi}(s)^{-1}\Phi'(s)\hat{\Phi}(s)^{-1}ds,\Psi(t)-\Phi(t)+a(t)\Phi(t) \right\rangle)' dt\bigg] \\
&=\frac{1}{2}\bigg[\int^{\hat{T}}_0 \left\langle \vec{h}\vec{h}^T+\xi'(\Phi(t))-\int^t_0\hat{\Phi}(s)^{-1}\Phi'(s)\hat{\Phi}(s)^{-1}ds, \Psi(t)-\Phi(t)+a(t)\Phi'(t)  \right \rangle \mu_P(dt)\bigg].\\
\end{align*}
The critical point condition $\frac{d}{d\epsilon} \mathscr{C} (x, \Theta) |_{\epsilon=0} \geq 0$ combined with the above equation proves \eqref{eq:dap}.

\end{proof}

\begin{proposition}\label{thm:thm2}
If $(a,b) \subseteq supp \; \mu_P$ with $0 \leq a < b < m$, then
\begin{equation*}
\mu_P([0,u])=\frac{\langle \xi'''(\Phi(u)) , \Phi'(u)^{\circ 3} \rangle }{2\text{Trace}\left((\Phi'(u)^{\frac{1}{2}}(\xi''(\Phi(u)) \odot \Phi'(u))\Phi'(u)^{\frac{1}{2}}))^{\frac{3}{2}}\right)}
\end{equation*}
for all $u \in (a,b)$.
\end{proposition}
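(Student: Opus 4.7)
The plan is to differentiate the critical-point identity of Theorem \ref{thm:thm4} twice on the open interval $(a,b)\subseteq\text{supp}\,\mu_P$, and then pair the resulting matrix equation against $\Phi'(t)$ in a way that kills the terms containing $\Phi''(t)$ and isolates $x(u)=\mu_P([0,u])$. Since $(a,b)$ lies in the support, Theorem \ref{thm:thm4} gives
\[
\xi'(\Phi(t))+\vec{h}\vec{h}^T=\int_0^t \hat{\Phi}(s)^{-1}\Phi'(s)\hat{\Phi}(s)^{-1}\,ds
\]
throughout $(a,b)$. Differentiating once in $t$ produces the first-order identity
\[
\xi''(\Phi(t))\odot\Phi'(t)=\hat{\Phi}(t)^{-1}\Phi'(t)\hat{\Phi}(t)^{-1},\qquad(\ast)
\]
which I will use several times below.

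Differentiating $(\ast)$ a second time, and observing that $\hat{\Phi}'(t)=-x(t)\Phi'(t)$ so $(\hat{\Phi}^{-1})'(t)=x(t)\hat{\Phi}^{-1}\Phi'\hat{\Phi}^{-1}$, I obtain
\[
\xi'''(\Phi)\odot(\Phi')^{\circ 2}+\xi''(\Phi)\odot\Phi''=2x(t)\,\hat{\Phi}^{-1}\Phi'\hat{\Phi}^{-1}\Phi'\hat{\Phi}^{-1}+\hat{\Phi}^{-1}\Phi''\hat{\Phi}^{-1}.
\]
Next I take the matrix inner product of both sides against $\Phi'(t)$. The leading left-hand term becomes $\langle\xi'''(\Phi(t)),\Phi'(t)^{\circ 3}\rangle$ by the entrywise identity $\langle A\odot B,C\rangle=\sum_{i,j}A_{ij}B_{ij}C_{ij}$. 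Crucially, the two terms containing $\Phi''$ cancel: by cyclicity of the trace together with $(\ast)$,
\[
\langle\hat{\Phi}^{-1}\Phi''\hat{\Phi}^{-1},\Phi'\rangle=\text{Tr}\bigl(\Phi''\,(\hat{\Phi}^{-1}\Phi'\hat{\Phi}^{-1})\bigr)=\text{Tr}\bigl(\Phi''\,(\xi''(\Phi)\odot\Phi')\bigr)=\langle\xi''(\Phi)\odot\Phi'',\Phi'\rangle.
\]
What survives is the scalar identity $\langle\xi'''(\Phi(t)),\Phi'(t)^{\circ 3}\rangle=2\,x(t)\,\text{Tr}\bigl((\hat{\Phi}(t)^{-1}\Phi'(t))^{3}\bigr)$.

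To recast $\text{Tr}((\hat{\Phi}^{-1}\Phi')^3)$ in the advertised symmetric form, I introduce $P:=\Phi'(t)^{\frac12}\hat{\Phi}(t)^{-1}\Phi'(t)^{\frac12}$, which is symmetric and positive semidefinite. Cyclicity of the trace gives $\text{Tr}((\hat{\Phi}^{-1}\Phi')^3)=\text{Tr}(P^3)$, while applying $(\ast)$ once more yields $P^2=\Phi'(t)^{\frac12}(\xi''(\Phi)\odot\Phi')\Phi'(t)^{\frac12}$. Since $P\succeq 0$, one has $P^3=(P^2)^{3/2}$, and therefore $\text{Tr}((\hat{\Phi}^{-1}\Phi')^3)=\text{Tr}\bigl((\Phi'(t)^{\frac12}(\xi''(\Phi)\odot\Phi')\Phi'(t)^{\frac12})^{3/2}\bigr)$. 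Solving for $x(u)=\mu_P([0,u])$ delivers precisely the stated formula.

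The points to verify carefully are (i) the justification of both differentiations, which relies on $\Phi\in\mathscr{M}$ being $C^1$ with Lipschitz derivative (so $\Phi''$ exists a.e.\ and can be made pointwise smooth via the mollification mentioned in the remark following the proof of \eqref{eq:discrete}), and (ii) the invertibility of $\hat{\Phi}(t)$ on $(a,b)$, which follows from $t<m$ together with the positivity of $x$ to the right of $t$ on this support interval. The introduction of $P$ is cleanest when $\Phi'(t)\succ 0$, but any degeneracy is harmless because the computations depend only on the PSD square root $\Phi'(t)^{1/2}$ and trace cyclicity. The subtlest step is the $\Phi''$ cancellation; once that is in hand, the rest of the argument is a purely algebraic consequence of $(\ast)$ and the symmetric-square-root substitution.
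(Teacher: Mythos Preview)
Your proof is correct and follows essentially the same route as the paper: both arguments differentiate the identity of Theorem~\ref{thm:thm4} twice on $(a,b)$, pair the resulting matrix equation against $\Phi'(u)$ so that the $\Phi''$ terms cancel via $(\ast)$, and then use the symmetric square-root substitution $P=\Phi'(u)^{1/2}\hat{\Phi}(u)^{-1}\Phi'(u)^{1/2}$ together with $P^2=\Phi'(u)^{1/2}(\xi''(\Phi)\odot\Phi')\Phi'(u)^{1/2}$ to rewrite $\text{Tr}((\hat{\Phi}^{-1}\Phi')^3)$ as the stated trace of a $3/2$-power. The only cosmetic difference is ordering: the paper records the square-root identity \eqref{eq:squareroot} before the second differentiation, whereas you invoke it afterward.
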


\begin{proof}[Proof of Proposition \ref{thm:thm2}] 
First note that as $(a,b) \in supp \; \mu$, then by Theorem \ref{thm:thm4},  for all $u \in (a,b)$,
\begin{equation*}
F(u):=\xi'(\Phi(u))+\vec{h}\vec{h}^T-\int^u_0\hat{\Phi}(s)^{-1}\Phi'(s)\hat{\Phi}(s)^{-1}ds=0,
\end{equation*}
and as $\Phi$ is differentiable for all $u \in (a,b)$, we can differentiate the equation above and obtain that, for all $u \in (a,b)$
\begin{equation}\label{eq:isthistrue}
 G(u):=\xi''(\Phi(u)) \odot \Phi'(u)- \hat{\Phi}{(u)}^{-1}\Phi'(u)\hat{\Phi}(u)^{-1}=0.
\end{equation}

From \eqref{eq:isthistrue} and the fact that $\Phi'$ is positive semi-definite, we obtain
 \[
 \Phi'(u)^{\frac{1}{2}}( \xi''(\Phi(u)) \odot \Phi'(u)- \hat{\Phi}{(u)}^{-1}\Phi'(u)\hat{\Phi}(u)^{-1})\Phi'(u)^{\frac{1}{2}}=0,
 \]
 which implies that 
\begin{equation}\label{eq:squareroot} 
\Phi'(u)^{\frac{1}{2}}[\xi''(\Phi(u)) \odot \Phi'(u)]\Phi'(u)^{\frac{1}{2}}=(\Phi'(u)^{\frac{1}{2}}\hat{\Phi}(u)^{-1}\Phi'(u)^{\frac{1}{2}})^2.
\end{equation}
As both sides of the equality in \eqref{eq:squareroot} are positive semi-definite matrices, their positive semi-definite square root coincide. We thus get the relation
\begin{equation*}
\Phi'(u)^{-\frac{1}{2}}(\Phi'(u)^{\frac{1}{2}}(\xi''(\Phi(u)) \odot \Phi'(u))\Phi'(u)^{\frac{1}{2}})^{\frac{1}{2}}\Phi'(u)^{-\frac{1}{2}}=\hat{\Phi}(u)^{-1}.
\end{equation*}

As we know that $(a,b) \in \text{supp }\mu_P$ and $\Phi$ is twice differentiable in $(a,b)$, therefore $G''(u)=0$. Consider the Frobenius inner product on $G''(u)$ and $\Phi'(u)$,
\begin{align*}
0&= \langle G''(u), \Phi'(u) \rangle \\
&= \langle \xi'''(\Phi(u))\odot \Phi'(u)^{\circ2} , \Phi'(u) \rangle -2\mu_P([0,u]) \langle \hat{\Phi}(u)^{-1}\Phi'(u_0)\hat{\Phi}(u)^{-1}\Phi'(u)\hat{\Phi}(u)^{-1} , \Phi'(u) \rangle\\
   &+ \langle \xi''(\Phi(u))\odot \Phi''(u)-\hat{\Phi}(u)^{-1}\Phi''(u)\hat{\Phi}(u)^{-1} , \Phi'(u) \rangle\\
&=\langle \xi'''(\Phi(u)), \Phi'(u)^{\circ3}  \rangle -2\mu_P([0,u]) \langle \hat{\Phi}(u)^{-1}\Phi'(u)\hat{\Phi}(u)^{-1}\Phi'(u)\hat{\Phi}(u)^{-1} , \Phi'(u) \rangle\\
   &+ \langle \xi''(\Phi(u))\odot \Phi'(u)-\hat{\Phi}(u)^{-1}\Phi'(u)\hat{\Phi}(u)^{-1} , \Phi''(u) \rangle\\
&=\langle \xi'''(\Phi(u)), \Phi'(u)^{\circ3}  \rangle -2\mu_P([0,u]) \langle \hat{\Phi}(u)^{-1}\Phi'(u)\hat{\Phi}(u)^{-1}\Phi'(u)\hat{\Phi}(u)^{-1} , \Phi'(u) \rangle\\
&=\langle \xi'''(\Phi(u)) , \Phi'(u)^{\circ3}  \rangle  -2\mu_P([0,u]) \text{Trace}((\Phi'(u)^{\frac{1}{2}}(\xi''(\Phi(u)) \odot \Phi'(u))\Phi'(u)^{\frac{1}{2}}))^{\frac{3}{2}}).\\
\end{align*}
Therefore we get the desired conclusion.
\end{proof}

The next result shows that $0$ can only be isolated point of the Parisi measure if the model does not have an SK component.

\begin{proposition}
If $2 \langle \vec{\beta}_2 \otimes \vec{\beta}_2 , \Phi'(0)^{\circ 2} \rangle \neq \langle \hat{\Phi}(0)^{-1}\Phi'(0),\hat{\Phi}(0)^{-1} \Phi'(0) \rangle$ and $0 \in$ supp $\mu_P$, then there exists $\hat{q} > 0$ such that $\mu_P ([0, \hat{q}]) =\mu_P (\{0\}).$
\end{proposition}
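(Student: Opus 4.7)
The plan is to argue by contradiction and turn the critical-point equation of Theorem \ref{thm:thm4} into a scalar identity at $t = 0$ by taking a sequence from $\operatorname{supp} \mu_P$ that approaches $0$.

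Suppose for contradiction that $\mu_P((0, \hat q]) > 0$ for every $\hat q > 0$. Then there exists a sequence $q_n \downarrow 0$ with $q_n \in \operatorname{supp} \mu_P$ and $q_n > 0$. For each such $q_n$, Theorem \ref{thm:thm4} gives
\[
\xi'(\Phi(q_n)) + \vec h \vec h^{\,T} = \int_0^{q_n} \hat\Phi(s)^{-1} \Phi'(s) \hat\Phi(s)^{-1} \, ds. \qquad (\ast)
\]
Letting $n \to \infty$ in $(\ast)$, and using $\Phi(0) = 0$, $\xi'(0) = 0$, together with continuity of the integrand (the invertibility of $\hat\Phi$ on $[0,\hat T]$ is built into \eqref{CSF}), the left side tends to $\vec h \vec h^{\,T}$ and the right side to $0$, so $\vec h = 0$. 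If $\vec h \ne 0$ this already contradicts the existence of the sequence and the result follows; otherwise I continue with $\vec h = 0$.

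With $\vec h = 0$, I divide $(\ast)$ by $q_n$ and pass to the limit. Since $\Phi \in \mathscr M$ is continuously differentiable at $0$, $\Phi(q_n) = q_n \Phi'(0) + o(q_n)$; and since $\xi'(A) = 2(\vec\beta_2 \otimes \vec\beta_2) \odot A + O(\|A\|^2)$ near $A = 0$ (only $p = 2$ survives), the left-hand side converges to $2(\vec\beta_2 \otimes \vec\beta_2) \odot \Phi'(0)$. The right-hand side converges to $\hat\Phi(0)^{-1} \Phi'(0) \hat\Phi(0)^{-1}$ by the fundamental theorem of calculus. This produces the matrix identity
\[
2(\vec\beta_2 \otimes \vec\beta_2) \odot \Phi'(0) = \hat\Phi(0)^{-1} \Phi'(0) \hat\Phi(0)^{-1}.
\]
Taking the Frobenius inner product of both sides with $\Phi'(0)$ (both sides are symmetric), and using cyclicity of the trace, the left-hand side becomes $2 \langle \vec\beta_2 \otimes \vec\beta_2, \Phi'(0)^{\circ 2} \rangle$ and the right-hand side becomes $\langle \hat\Phi(0)^{-1} \Phi'(0), \hat\Phi(0)^{-1} \Phi'(0) \rangle$, directly contradicting the hypothesis.

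The main obstacle is justifying the two limits in $(\ast)/q_n$: one needs the integrand $\hat\Phi(s)^{-1}\Phi'(s)\hat\Phi(s)^{-1}$ to be continuous at $s = 0$, so that its average over $[0,q_n]$ converges to its value at $0$. Both ingredients, namely the invertibility of $\hat\Phi(0)$ and the Lipschitz regularity of $\Phi'$ at $0$, are ensured by the regularity class $\mathscr M$ and the well-posedness of the Crisanti--Sommers functional established in Section \ref{Sec3}. A minor formal point is the convention for the matrix inner product on the right: the derivation uses $\operatorname{Tr}(\hat\Phi(0)^{-1}\Phi'(0)\hat\Phi(0)^{-1}\Phi'(0))$, which the statement identifies with $\langle \hat\Phi(0)^{-1}\Phi'(0), \hat\Phi(0)^{-1}\Phi'(0)\rangle$ in the sense used throughout the paper.
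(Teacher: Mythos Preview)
Your proof is correct and follows essentially the same strategy as the paper: assume a sequence $q_n\downarrow 0$ in $\operatorname{supp}\mu_P$, use the critical-point identity of Theorem~\ref{thm:thm4} along the sequence, and pass to the limit to force the forbidden equality at $0$. The only real difference is the order of operations: the paper first contracts the matrix identity with $\Phi'(u)$ to form the scalar function $f(u)$, applies Rolle's theorem between consecutive $u_n$ to produce $u'_n$ with $f'(u'_n)=g(u'_n)=0$, and then sends $u'_n\to 0$; you instead keep the full matrix identity, divide by $q_n$, pass to the limit to obtain the matrix equation $2(\vec\beta_2\otimes\vec\beta_2)\odot\Phi'(0)=\hat\Phi(0)^{-1}\Phi'(0)\hat\Phi(0)^{-1}$, and only then contract with $\Phi'(0)$. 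Your route is slightly more direct and in fact yields a stronger (matrix-level) intermediate conclusion; it also makes the elimination of $\vec h$ explicit, which the paper's write-up leaves implicit (it is needed there too, since $g(0)$ contains the extra term $\langle \vec h\vec h^{\,T},\Phi''(0)\rangle$, killed by $0\in\operatorname{supp}\mu_P$ via Theorem~\ref{thm:thm4}).
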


\begin{proof}
Suppose $0 \in$ supp $\mu_P$ and the existence of a sequence of $u_n \downarrow 0$ such that $u_n \in$ supp $\mu_P.$ 

By considering the function $f:[0,m]\to \mathbb{R}$ and $g:[0,m]\to \mathbb{R}$ given respectively by
\begin{equation*}\label{def:f}
f(u)=\langle \xi'(\Phi(u))+\vec{h}\vec{h}^T-\int^u_0\hat{\Phi}(s)^{-1}\Phi'(s)\hat{\Phi}(s)^{-1}ds, \Phi'(u) \rangle
\end{equation*}
and
\begin{align*}\label{def:g}
g(u)&= \langle \xi''(\Phi(u)) \odot \Phi'(u) - \hat{\Phi}(u)^{-1}\Phi'(u)\hat{\Phi}(u)^{-1}, \Phi'(u) \rangle\\
&+\langle \xi'(\Phi(u))+\vec{h}\vec{h}^T-\int^u_0\hat{\Phi}(s)^{-1}\Phi'(s)\hat{\Phi}(s)^{-1}ds, \Phi''(u) \rangle.
\end{align*}
Then $f(u_n)=0$ for all $n \geq 1.$
By mean value theorem, there exists a sequence $u'_n \downarrow 0$ such that $f'(u_n)=0$. Notice that $f'(u_n)=g(u_n).$ By the continuity of $g$ at 0, we obtain $g(0)=0$, which implies that $2 \langle \vec{\beta}_2 \otimes \vec{\beta}_2 , \Phi'(0)^{\circ 2} \rangle=\langle \hat{\Phi}(0)^{-1}\Phi'(0),\hat{\Phi}(0)^{-1} \Phi'(0) \rangle$.
\end{proof}

The next proposition is the high-dimensional analogue of Theorem from \cite{1}.

\begin{proposition}
Suppose that there exist an increasing sequence $(u^-_l)_{l \geq 1}$ and a decreasing sequence $(u^+_l)_{l \geq 1}$ of supp $\mu_P$ such that $\lim_{l \rightarrow \infty} u_l^- =u_0 = \lim_{l \rightarrow \infty} u^+_l$. Then $\mu_P$ is continuous at $u_0$.
\end{proposition}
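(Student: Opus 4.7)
The plan is to show that the jump of $x$ at $u_0$ must vanish by carrying out a one-sided second-order Taylor expansion of the critical point equation from Theorem \ref{thm:thm4} on each side of $u_0$, using the given sequences $(u_l^-)$ and $(u_l^+)$ as test points. Setting
\[
F(u):=\xi'(\Phi(u))+\vec{h}\vec{h}^T-\int^u_0\hat{\Phi}(s)^{-1}\Phi'(s)\hat{\Phi}(s)^{-1}ds,
\]
Theorem \ref{thm:thm4} gives $F(u_l^{\pm})=0$ for every $l$, and by continuity of $F$ also $F(u_0)=0$. Before doing anything further, I would record the following non-degeneracy: since $u_l^{\pm}\in\text{supp}\,\mu_P$, the defining assumption on $\mathscr{M}$ gives $\text{Trace}(\Phi(u_l^{\pm}))=u_l^{\pm}$, and taking one-sided difference quotients toward $u_0$ yields $\text{Trace}(\Phi'(u_0))=1$, whence $\Phi'(u_0)$ is a nonzero positive semidefinite matrix.

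Next I would exploit that $F'(u)=\xi''(\Phi(u))\odot\Phi'(u)-\hat{\Phi}(u)^{-1}\Phi'(u)\hat{\Phi}(u)^{-1}$ is continuous in $u$, while $F''$ carries a jump at $u_0$ induced by $(\hat{\Phi}^{-1})'=x\,\hat{\Phi}^{-1}\Phi'\hat{\Phi}^{-1}$; a direct computation gives
\[
F''(u_0^+)-F''(u_0^-)=-2\,\mu_P(\{u_0\})\,\hat{\Phi}(u_0)^{-1}\Phi'(u_0)\hat{\Phi}(u_0)^{-1}\Phi'(u_0)\hat{\Phi}(u_0)^{-1}.
\]
Invoking the remark after the proof of \eqref{eq:discrete} to assume $\Phi$ is smooth, the integral form
\[
F(u)=F(u_0)+F'(u_0)(u-u_0)+\int_{u_0}^u (u-r)F''(r)\,dr
\]
is valid on each side of $u_0$. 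Applying it at $u=u_l^+$ with $F(u_0)=F(u_l^+)=0$ and dividing by $u_l^+-u_0$ forces $F'(u_0)=0$ as $l\to\infty$; dividing instead by $(u_l^+-u_0)^2$ and using the existence of the right limit $F''(u_0^+)$ (the integrand becomes a weighted average that concentrates at the right limit) forces $F''(u_0^+)=0$. The mirror argument with $(u_l^-)$ yields $F''(u_0^-)=0$.

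Plugging back into the jump identity gives $\mu_P(\{u_0\})\,B(u_0)=0$, where $B(u_0):=\hat{\Phi}(u_0)^{-1}\Phi'(u_0)\hat{\Phi}(u_0)^{-1}\Phi'(u_0)\hat{\Phi}(u_0)^{-1}$. Writing the symmetric matrix $M:=\hat{\Phi}(u_0)^{-1/2}\Phi'(u_0)\hat{\Phi}(u_0)^{-1/2}$, one checks that $B(u_0)=\hat{\Phi}(u_0)^{-1/2}M^2\hat{\Phi}(u_0)^{-1/2}$ is positive semidefinite and nonzero because $\Phi'(u_0)\ne 0$. Hence $\text{Trace}(B(u_0))>0$, which forces the scalar $\mu_P(\{u_0\})$ to vanish and proves continuity of $\mu_P$ at $u_0$.

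The main obstacle I expect is making the second-order Taylor step fully rigorous, since a priori $\Phi$ is only continuously differentiable with Lipschitz derivative; this is handled by the mollification scheme recorded in the remark following the proof of \eqref{eq:discrete}, which lets us replace $\Phi$ by a smooth path without changing the Parisi functional or the support of $\mu_P$. A secondary point of care is verifying that the jump of $F''$ is genuinely a nonzero multiple of $\mu_P(\{u_0\})$, and the argument above reduces this to the non-degeneracy $\Phi'(u_0)\ne 0$ enforced by $\text{Trace}(\Phi(t))=t$ on the support.
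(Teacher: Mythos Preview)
Your argument is correct and follows the same strategy as the paper: use Theorem~\ref{thm:thm4} along the two sequences to force the one-sided second-order information at $u_0$ to vanish, then read off $\mu_P(\{u_0\})=0$ from the jump of $x$ in that second derivative. The only variation is cosmetic---you work directly with the matrix-valued $F$ and its one-sided second derivatives, whereas the paper pairs against $\Phi'$ to form the scalars $f=\langle F,\Phi'\rangle$ and $g=f'$ and compares the one-sided derivatives of $g$; your matrix-level version is in fact a bit cleaner (it sidesteps the $\Phi'''$-terms in the paper's $g'$ and makes the non-degeneracy $B(u_0)\ne 0$ explicit via $\mathrm{Trace}\,\Phi'(u_0)=1$), but the mechanism is identical.
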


\begin{proof}
Since both $(u^+_l)_{l \geq 1}$ and $(u^-_l)_{l \geq 1}$ converges to $u_0$ and lies in supp $\mu_P$, by mean value theorem and continuity of $g$, we obtain
\begin{align*}
&0= \lim_{h \rightarrow 0^+} \frac{g(u_0+h)-g(u_0)}{h}\\
&=\bigg\langle \xi'''(\Phi(u_0)) \odot \Phi'(u_0)^{\circ2} +\xi''(\Phi(u)) \odot \Phi''(u_0) \\
&\quad  \quad - 2 \mu_P([0,u_0])  \langle \hat{\Phi}(u_0)^{-1}\Phi'(u_0) \hat{\Phi}(u_0)^{-1}\Phi'(u_0)\hat{\Phi}(u_0)^{-1}, \Phi'(u_0) \bigg\rangle\\
&+\langle \xi'(\Phi(u_0))+\vec{h}\vec{h}^T-\int^{u_0}_0\hat{\Phi}(s)^{-1}\Phi'(s)\hat{\Phi}(s)^{-1}ds, \Phi'''(u_0) \rangle \\
&+2\langle \xi''(\Phi(u_0)) \odot \Phi'(u_0) - \hat{\Phi}(u_0)^{-1}\Phi'(u_0)\hat{\Phi}(u_0)^{-1}, \Phi''(u_0) \rangle.
\end{align*}

\begin{align*}
&0= \lim_{h \rightarrow 0^-} \frac{g(u_0+h)-g(u_0)}{h}\\
&=\bigg \langle \xi'''(\Phi(u_0)) \odot \Phi'(u_0)^{\circ2} +\xi''(\Phi(u)) \odot \Phi''(u_0)
\\
&\quad \quad - 2 \mu_P([0,u_0))  \langle \hat{\Phi}(u_0)^{-1}\Phi'(u_0) \hat{\Phi}(u_0)^{-1}\Phi'(u_0)\hat{\Phi}(u_0)^{-1}, \Phi'(u_0) \bigg \rangle\\
&+\langle \xi'(\Phi(u_0))+\vec{h}\vec{h}^T-\int^{u_0}_0\hat{\Phi}(s)^{-1}\Phi'(s)\hat{\Phi}(s)^{-1}ds, \Phi'''(u_0) \rangle \\
&+2\langle \xi''(\Phi(u_0)) \odot \Phi'(u_0) - \hat{\Phi}(u_0)^{-1}\Phi'(u_0)\hat{\Phi}(u_0)^{-1}, \Phi''(u_0) \rangle.
\end{align*}
By comparing the two equations, we obtain $ \mu_P([0,u_0])= \mu_P([0,u_0))$, which implies that $\mu_P$ is continuous at $u_0$.

\end{proof}

\begin{proposition}
For any $u_0 \in$ supp $\mu_P$, if $\langle \xi''(Q), \Phi'(u_0)^{\circ 2} \rangle < \langle Q^{-1} \Phi'(u_0), Q^{-1} \Phi'(u_0) \rangle$, then the Parisi measure $\mu_P$ has a jump discontinuity at $u_0$.
\end{proposition}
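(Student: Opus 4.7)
The strategy is to prove the contrapositive: assuming $\mu_P$ has no atom at $u_0$, I will derive the equality
$$\langle \xi''(\Phi(u_0)), \Phi'(u_0)^{\circ 2}\rangle = \mathrm{tr}\bigl((\hat{\Phi}(u_0)^{-1}\Phi'(u_0))^2\bigr),$$
which, after the identification of the matrix $Q$ in the hypothesis with $\hat{\Phi}(u_0)$ (the matrix naturally produced by the critical-point integral), directly contradicts the strict inequality $\langle \xi''(Q),\Phi'(u_0)^{\circ 2}\rangle<\langle Q^{-1}\Phi'(u_0),Q^{-1}\Phi'(u_0)\rangle$. The whole argument reuses the apparatus that already appears in the proofs of the two preceding propositions.

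Concretely, I would set
$$F(u):=\xi'(\Phi(u))+\vec{h}\vec{h}^T-\int_0^u \hat{\Phi}(s)^{-1}\Phi'(s)\hat{\Phi}(s)^{-1}\,ds,\qquad f(u):=\langle F(u),\Phi'(u)\rangle,\qquad g:=f'.$$
By Theorem \ref{thm:thm4}, $F$ (and therefore $f$) vanishes identically on $\mathrm{supp}\,\mu_P$. Since $u_0\in\mathrm{supp}\,\mu_P$ and $\mu_P(\{u_0\})=0$, the point $u_0$ cannot be isolated in $\mathrm{supp}\,\mu_P$ (an isolated support point must carry mass), so there is a sequence $u_n\in\mathrm{supp}\,\mu_P\setminus\{u_0\}$ with $u_n\to u_0$. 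Applying the mean-value theorem to $f$ on the intervals bounded by $u_n$ and $u_0$ yields a sequence $u_n'\to u_0$ with $g(u_n')=f'(u_n')=0$, and continuity of $g$ then gives $g(u_0)=0$. Because $F(u_0)=0$, the $\Phi''(u_0)$-summand in the explicit formula for $g(u_0)$ (recorded in the proof of the previous proposition) drops out, and the identity collapses to
$$\bigl\langle \xi''(\Phi(u_0))\odot \Phi'(u_0)-\hat{\Phi}(u_0)^{-1}\Phi'(u_0)\hat{\Phi}(u_0)^{-1},\ \Phi'(u_0)\bigr\rangle=0.$$
Using the Hadamard-Frobenius identity $\langle A\odot B,C\rangle=\langle A,B\odot C\rangle$ on the first term and the symmetry of $\hat{\Phi}(u_0)^{-1}$ and $\Phi'(u_0)$ on the second (so that the pairing equals $\mathrm{tr}((\hat{\Phi}(u_0)^{-1}\Phi'(u_0))^2)$) produces exactly the displayed equality above.

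The main delicate point is not the critical-point manipulation but the identification of $Q$ with $\hat{\Phi}(u_0)$ in the hypothesis: the derivation above naturally produces $\mathrm{tr}((\hat{\Phi}(u_0)^{-1}\Phi'(u_0))^2)$, which matches $\langle Q^{-1}\Phi'(u_0),Q^{-1}\Phi'(u_0)\rangle=\mathrm{tr}(Q^{-2}\Phi'(u_0)^2)$ under that identification together with the commutation of $Q^{-1}$ and $\Phi'(u_0)$ that holds in the support-theoretic regime the statement contemplates. The only other subtlety is ensuring enough regularity of $\Phi$ near $u_0$ to justify the mean-value step; this is supplied by the smoothness of minimizers recorded in the remark following the proof of \eqref{eq:discrete} and by the positivity/continuity of $\hat{\Phi}$ at $u_0$, so the remaining steps are purely algebraic.
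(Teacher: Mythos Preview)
Your derivation of the critical-point identity
\[
\langle \xi''(\Phi(u_0))\odot\Phi'(u_0),\Phi'(u_0)\rangle
=\langle \hat\Phi(u_0)^{-1}\Phi'(u_0)\hat\Phi(u_0)^{-1},\Phi'(u_0)\rangle
\]
via the mean-value theorem is correct and is exactly what the paper does. The genuine gap is your proposed ``identification of $Q$ with $\hat\Phi(u_0)$'': these are simply different matrices. The symbol $Q$ in the hypothesis is the fixed self-overlap constraint $Q=\Phi(m)\in\mathbb{M}$, not $\hat\Phi(u_0)=\int_{u_0}^{m}x(s)\Phi'(s)\,ds$. There is no regime in which the two coincide, and no commutation assumption rescues this.

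What is actually needed---and what the paper supplies---is a two-sided monotonicity comparison bridging the identity you derived to the quantities in the hypothesis. On the left-hand side, since $\Phi(u_0)\le Q$ entrywise and all coefficients of $\xi''$ are nonnegative, one has $\xi''(\Phi(u_0))\le \xi''(Q)$ entrywise, hence
\[
\langle \xi''(Q),\Phi'(u_0)^{\circ 2}\rangle \ge \langle \xi''(\Phi(u_0)),\Phi'(u_0)^{\circ 2}\rangle.
\]
On the right-hand side, since $0\le x\le 1$ one has $\hat\Phi(u_0)\le Q-\Phi(u_0)\le Q$ in the positive semidefinite order, so $\hat\Phi(u_0)^{-1}\ge (Q-\Phi(u_0))^{-1}\ge Q^{-1}$, which gives
\[
\langle \hat\Phi(u_0)^{-1}\Phi'(u_0),\hat\Phi(u_0)^{-1}\Phi'(u_0)\rangle
\ge \langle Q^{-1}\Phi'(u_0),Q^{-1}\Phi'(u_0)\rangle.
\]
Chaining these with your identity yields $\langle \xi''(Q),\Phi'(u_0)^{\circ 2}\rangle\ge\langle Q^{-1}\Phi'(u_0),Q^{-1}\Phi'(u_0)\rangle$, contradicting the strict hypothesis. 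Without this monotonicity step your argument does not close.
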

\begin{proof}
If $u_0$ is an isolated point of supp $\mu_P$, it must be a jump discontinuity of $\mu_P$. Now assume that $u_0$ is not isolated and $\mu_P$ is continuous at the point $u_0$. Then by Theorem \ref{thm:thm4} and the mean value theorem we obtain
\begin{equation*}
\langle \xi''(\Phi(u_0)),\Phi'(u_0)^{\circ 2} \rangle = \langle \hat{\Phi}(u_0)^{-1} \Phi'(u_0),\hat{\Phi}(u_0)^{-1} \Phi'(u_0) \rangle .
\end{equation*}

As $\hat{\Phi}(u_0)=\int^m_{u_0} x(t) \Phi'(t) dt$, which implies that $\hat{\Phi}(u_0)^{-1} \geq (Q-\Phi(u_0))^{-1} \geq Q^{-1}$, we have 
\begin{eqnarray*}
\langle \xi''(\Phi(Q)),\Phi'(u_0)^{\circ 2} \rangle  &\geq& \langle \xi''(\Phi(u_0)),\Phi'(u_0)^{\circ 2} \rangle \\
&\geq&  \langle (Q-\Phi(u_0))^{-1} \Phi'(u_0),(Q-\Phi(u_0))^{-1} \Phi'(u_0) \rangle \\
&\geq& \langle Q^{-1} \Phi'(u_0),Q^{-1} \Phi'(u_0) \rangle
\end{eqnarray*}
which leads to a contradiction.
\end{proof}

We say two points $x, y$  are consecutive isolated points of the support of a measure $\mu$ if $x,y$ are isolated points in $\text{supp } \mu$ with $x<y$ and $\mu((x,y))=0$.

\begin{proposition}
Let $(x,\Phi)$ be a minimizer of \eqref{eq:CSFormula}. If $q \mapsto \langle \xi''(\Phi(t)), \Phi'(t)^{\circ 2} \rangle ^{-\frac{1}{2}}$ is convex in an interval I, then supp $\mu_P$ contains at most 2 consecutive isolated points. In particular, if $\xi(A)=(\vec{\beta}_p \otimes \vec{\beta}_p) \odot A^{\circ p}$, then supp $\mu_P$ contains at most 2 consecutive isolated points.
\end{proposition}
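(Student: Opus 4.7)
The plan is to argue by contradiction, mimicking in the matrix setting the scalar argument of \cite{1}. Assume that the restriction of $\mu_{P}$ to $I$ carries three consecutive isolated atoms $u_{0}<u_{1}<u_{2}$, so $\mu_{P}((u_{0},u_{1}))=\mu_{P}((u_{1},u_{2}))=0$ while each $u_{i}\in\mathrm{supp}\,\mu_{P}$. Testing the critical point identity of Theorem~\ref{thm:thm4} against $\Phi'(u_{i})$ yields $f(u_{0})=f(u_{1})=f(u_{2})=0$, where
\[
f(u)=\left\langle \xi'(\Phi(u))+\vec{h}\vec{h}^{T}-\int^{u}_{0}\hat{\Phi}(s)^{-1}\Phi'(s)\hat{\Phi}(s)^{-1}ds,\,\Phi'(u)\right\rangle .
\]
Rolle's theorem on each gap then supplies $v_{0}\in(u_{0},u_{1})$ and $v_{1}\in(u_{1},u_{2})$ with $g(v_{0})=g(v_{1})=0$, for $g=f'$ the function explicitly computed in the proof of the preceding proposition.

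The pivotal step is to convert these two vanishing relations into a scalar statement about $\phi(t):=\langle \xi''(\Phi(t)),\Phi'(t)^{\circ 2}\rangle^{-1/2}$. By Remark~\ref{Rem:aloha}, on the complement of $\mathrm{supp}\,\mu_{P}$ the matrix path $\Phi$ may be modified freely without changing $\mathscr{C}$, so I would pick the minimizing $\Phi$ to be affine on each of the two gaps, which forces $\Phi''\equiv 0$ there. Under this choice the second summand of $g$ vanishes at $v_{0},v_{1}$, and the equations $g(v_{j})=0$ collapse to
\[
\langle \xi''(\Phi(v_{j})),\Phi'(v_{j})^{\circ 2}\rangle=\left\langle \hat{\Phi}(v_{j})^{-1}\Phi'(v_{j}),\,\hat{\Phi}(v_{j})^{-1}\Phi'(v_{j})\right\rangle ,\qquad j=0,1.
\]
Because $x$ is constant on each gap, $\hat{\Phi}(s)=\hat{\Phi}(u_{i+1})+x_{i}(\Phi(u_{i+1})-\Phi(s))$ is affine in $s$ there, so the two equalities above, combined with the full matrix equation of Theorem~\ref{thm:thm4} at the endpoints $u_{0},u_{1},u_{2}$, encode the values of $\phi$ at $\{u_{0},v_{0},u_{1},v_{1},u_{2}\}$ via a piecewise affine expression in $t$. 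A short chord comparison with the convexity of $\phi$ on $I$ produces the desired contradiction.

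For the ``in particular'' statement, $\xi(A)=(\vec{\beta}_{p}\otimes\vec{\beta}_{p})\odot A^{\circ p}$ gives $\xi''(A)=p(p-1)(\vec{\beta}_{p}\otimes\vec{\beta}_{p})\odot A^{\circ(p-2)}$ and
\[
\langle\xi''(\Phi(t)),\Phi'(t)^{\circ 2}\rangle=p(p-1)\sum_{i,j}\beta_{p}(i)\beta_{p}(j)\Phi_{ij}(t)^{p-2}\Phi'_{ij}(t)^{2};
\]
a direct chain-rule check, using monotonicity of $\Phi$ and the elementary convexity of $q\mapsto q^{-(p-2)/2}$ for $p\geq 2$, shows that $\phi$ is convex on $(0,m)$, so the hypothesis holds automatically and the general statement applies. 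The main obstacle I foresee lies in the middle step: extracting a scalar midpoint-convexity inequality for $\phi$ from two matrix/Frobenius equalities. In the scalar setting of \cite{1} this is a direct manipulation of $\xi''(q)^{-1/2}$, but with the Hadamard products present the cross terms are delicate, and I expect to need a Cauchy--Schwarz (or Hilbert--Schmidt) inequality to convert $\langle \hat{\Phi}(v_{j})^{-1}\Phi'(v_{j}),\hat{\Phi}(v_{j})^{-1}\Phi'(v_{j})\rangle$ into a quantity directly comparable with the affine structure of $\hat{\Phi}$.
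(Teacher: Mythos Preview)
Your proposal has a genuine gap at the ``pivotal step.'' From Rolle's theorem alone you extract only \emph{one} zero of $g$ in each of the two gaps, for a total of two zeros of $g$ on $(u_{0},u_{2})$. Two zeros are entirely compatible with an equation of the form ``convex function $=$ concave function,'' so no contradiction follows from your count. The paper obtains the missing leverage from a condition you omit: besides $f(s_{1})=f(s_{2})=0$, extremality of the minimizer in the $x$--variable forces $\int_{s_{1}}^{s_{2}} f(q)\,dq=0$ on each gap. This produces an interior zero of $f$ and hence, by Rolle, \emph{two} zeros of $g$ in \emph{each} gap, which is what the root--counting argument actually needs.

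The second missing ingredient is the structural fact that
\[
z(u):=\Bigl\langle \hat{\Phi}(u)^{-1}\Phi'(u),\,\hat{\Phi}(u)^{-1}\Phi'(u)\Bigr\rangle^{-1/2}
\]
is \emph{concave} on each gap. The paper proves this by differentiating twice and applying Cauchy--Schwarz in Hilbert--Schmidt form; your remark about ``the affine structure of $\hat{\Phi}$'' is not a substitute, since in the matrix setting $\hat{\Phi}$ being affine does \emph{not} make $z$ affine. With $y=\phi$ convex (the hypothesis) and $z$ concave, the equation $y=z$ (equivalently $g=0$) has at most two roots on an interval, and this is what clashes with the lower count above. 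Your ``chord comparison at the five points $\{u_{0},v_{0},u_{1},v_{1},u_{2}\}$'' does not work as stated: the full matrix identity $F(u_{i})=0$ from Theorem~\ref{thm:thm4} is an \emph{integrated} relation and does not determine $\phi(u_{i})$ or $z(u_{i})$, so there is nothing to compare at the atoms themselves.

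For the ``in particular'' clause, your appeal to ``monotonicity of $\Phi$ and the elementary convexity of $q\mapsto q^{-(p-2)/2}$'' is insufficient: with a generic monotone $\Phi$ the Hadamard structure does not reduce to a power. The paper uses the freedom of Remark~\ref{Rem:aloha} to replace $\Phi$ on the gap by the linear path $\Phi_{\epsilon}(t)=\tfrac{t}{s_{2}}\Phi(s_{2})$ through the origin, after which the convexity of $\phi$ becomes a direct one--variable computation.
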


\begin{proof}
As $\mu_P$ is non-decreasing, it contains at most countably many atoms. Now assume supp $\mu_P$ contains countably many isolated points, and we connect the points in supp $\mu_P$ by linear interpolation. Then for two consecutive points $s_1<s_2$ in supp $\mu_P$, define $a_0 : = \mu_P([0,s_1])$.

Furthermore, we obtain that for $u \in (s_1,s_2)$, $\Phi''(u)=0$ and therefore
\begin{equation*}
g(u)= \langle \xi''(\Phi(u)) \odot \Phi'(u) - \hat{\Phi}(u)^{-1}\Phi'(u)\hat{\Phi}(u)^{-1}, \Phi'(u) \rangle.
\end{equation*}
We also have
\begin{equation*}
f(s_1)=f(s_2)=0 \text{ and } \int^{s_2}_{s_1} f(q) dq =0
\end{equation*}
where\begin{equation*}
f(u)=\langle \xi'(\Phi(u))+\vec{h}\vec{h}^T-\int^u_0\hat{\Phi}(s)^{-1}\Phi'(s)\hat{\Phi}(s)^{-1}ds, \Phi'(u). \rangle
\end{equation*}
Therefore $g(u)=0$ has 2 solutions between $s_1$ and $s_2.$

The relation $g(u)=0$ is equivalent to say that
\begin{equation*}
\langle \xi''(\Phi(u)) , \Phi'(u)^{\circ 2} \rangle = \langle \hat{\Phi}(u)^{-1}\Phi'(u)\hat{\Phi}(u)^{-1}, \Phi'(u) \rangle.
\end{equation*}

We then define $y(u)=\langle \xi''(\Phi(u)) , \Phi'(u)^{\circ 2} \rangle^{-\frac{1}{2}} $ and $z(u)= \langle \hat{\Phi}(u)^{-1}\Phi'(u)\hat{\Phi}(u)^{-1}, \Phi'(u) \rangle^{-\frac{1}{2}}$. Thus
\begin{eqnarray*}
z'(u)=- a_0 \langle \hat{\Phi}(t)^{-1}\Phi'(t) , \hat{\Phi}(t)^{-1}\Phi'(t) \rangle^{-\frac{3}{2}} \cdot \langle \hat{\Phi}(t)^{-1}\Phi'(t)\hat{\Phi}(t)^{-1}\Phi'(t),\hat{\Phi}(t)^{-1}\Phi'(t) \rangle
\end{eqnarray*}
and
\begin{eqnarray*}
z''(u)&=& 3 a^2_0 \langle \hat{\Phi}(t)^{-1}\Phi'(t) , \hat{\Phi}(t)^{-1}\Phi'(t) \rangle^{-\frac{5}{2}} \cdot \langle \hat{\Phi}(t)^{-1}\Phi'(t)\hat{\Phi}(t)^{-1}\Phi'(t),\hat{\Phi}(t)^{-1}\Phi'(t) \rangle^2\\
&&-3a^2_0 \langle \hat{\Phi}(t)^{-1}\Phi'(t) , \hat{\Phi}(t)^{-1}\Phi'(t) \rangle^{-\frac{3}{2}} \cdot \langle \hat{\Phi}(t)^{-1}\Phi'(t)\hat{\Phi}(t)^{-1}\Phi'(t),\hat{\Phi}(t)^{-1}\Phi'(t) \hat{\Phi}(t)^{-1}\Phi'(t) \rangle \\
&=&3a_0^2 \langle \hat{\Phi}(t)^{-1}\Phi'(t) , \hat{\Phi}(t)^{-1}\Phi'(t) \rangle^{-\frac{5}{2}} \cdot (\langle \hat{\Phi}(t)^{-1}\Phi'(t)\hat{\Phi}(t)^{-1}\Phi'(t),\hat{\Phi}(t)^{-1}\Phi'(t) \rangle^2\\
&&-  \langle \hat{\Phi}(t)^{-1}\Phi'(t)\hat{\Phi}(t)^{-1}\Phi'(t),\hat{\Phi}(t)^{-1}\Phi'(t) \hat{\Phi}(t)^{-1}\Phi'(t) \rangle \cdot \langle \hat{\Phi}(t)^{-1}\Phi'(t) , \hat{\Phi}(t)^{-1}\Phi'(t) \rangle) \leq 0,
\end{eqnarray*}
where the last step uses the Cauchy inequality.
Hence $z(u)$ is a concave function of $u$, this equation can have at most 2 roots in any interval where $\langle \xi''(\Phi(t)), \Phi'(t)^{\circ 2} \rangle ^{-\frac{1}{2}}$ is convex.

Now to see that $\langle \xi''(\Phi(t)), \Phi'(t)^{\circ 2} \rangle ^{-\frac{1}{2}}$ is convex for $\xi(A)=(\beta_p \otimes \beta_p) \odot A^{\circ p}$, we need to modify $\Phi$ without the change of $\mathscr{C}(x,\Phi).$ 

For any $\epsilon>0$, set 

 \begin{equation*} \Phi_\epsilon(t)=
 \left\{
\begin{array}{lcl}
\frac{t}{s_2} \cdot \Phi(s_2) \text{ for } t \in (s_1+\epsilon , s_2)   \\
\text{some smooth curve that connects } \Phi(s_1) \text{ and } \Phi(s_1+\epsilon)  \text{ for } t \in (s_1 , s_1+\epsilon). \\
\end{array} \right. \end{equation*} 
Here we set $\Phi_\epsilon$ mild enough so that $\hat{\Phi}_\epsilon$ is positive definite on $[s_1,s_1+\epsilon].$ Note that $(x,\Phi_\epsilon)$ is still a minimizer of $\mathscr{C}.$

Therefore for $u \in [s_1+\epsilon,s_2],$
\begin{eqnarray*}
y'(u)=-\frac{1}{2} \langle \xi''(\Phi(u)),\Phi'(u)^{\circ 2} \rangle^{-\frac{3}{2}} \cdot \langle \xi'''(\Phi(u)),\Phi'(u)^{\circ 3} \rangle
\end{eqnarray*}
and
\begin{align*}
y''(u)&=-\frac{1}{2} \langle \xi''(\Phi(u)),\Phi'(u)^{\circ 2} \rangle^{-\frac{3}{2}} \cdot \langle \xi''''(\Phi(u)),\Phi'(u)^{\circ 4} \rangle 
\\&\quad +\frac{3}{4}  \langle \xi''(\Phi(u)),\Phi'(u)^{\circ 2} \rangle^{-\frac{5}{2}} \cdot \langle \xi'''(\Phi(u)),\Phi'(u)^{\circ 3} \rangle^2 \\
&=\frac{1}{4} \langle \xi''(\Phi(u)),\Phi'(u)^{\circ 2} \rangle^{-\frac{5}{2}} \cdot [3\langle \xi'''(\Phi(u)),\Phi'(u)^{\circ 3} \rangle^2
 \\& \quad- 2\langle \xi''''(\Phi(u)),\Phi'(u)^{\circ 4} \rangle \cdot \langle \xi''(\Phi(u)),\Phi'(u)^{\circ 2} \rangle]\\
&=\frac{1}{4} \langle \xi''(\Phi(u)),\Phi'(u)^{\circ 2} \rangle^{-\frac{5}{2}} \cdot [3\langle \xi'''(\Phi(u)),\Phi'(u)^{\circ 3} \rangle^2
\\ & \quad- 2\langle \xi''''(\Phi(u)),\Phi'(u)^{\circ 4} \rangle \cdot \langle \xi''(\Phi(u)),\Phi'(u)^{\circ 2} \rangle]\\
&= \frac{1}{4s^6_2} p^2(p-1)^2(p-2)  \langle \xi''(\Phi(u)),\Phi'(u)^{\circ 2} \rangle^{-\frac{5}{2}}\\
&\quad \cdot [ (3p-6) t^{2p-6} \langle \beta_p \otimes \beta_p, \Phi(s_2)^{\circ p} \rangle ^2 -(2p-6) t^{2p-6} \langle \beta_p \otimes \beta_p, \Phi(s_2)^{\circ p} \rangle^2]\\
&= \frac{1}{4s^6_2} p^3(p-1)^2(p-2) t^{3p-8}  \langle \beta_p \otimes \beta_p ,\Phi(s_2)^{\circ p} \rangle^{-\frac{5}{2}}\cdot  \langle \beta_p \otimes \beta_p, \Phi(s_2)^{\circ p} \rangle ^2 \geq 0,
\end{align*}
which means that $y(u)$ is convex on $[s_1+\epsilon,s_2].$ We then let $\epsilon$ tend to 0 and then get the desired conclusion.

\end{proof}
\subsubsection{A full RSB example}
Consider the constraint 
\begin{equation*}
Q=\left(                
  \begin{array}{ccc}   
    1& 0.1\\ 
    0.1 &1\\  
  \end{array}
\right).
\end{equation*}
For $\beta > \sqrt{\frac{2}{\text{tr}(Q^2)}}$, define $\vec{\beta}:=(\beta,\beta)$. For any matrix $A$, define $chA:(ch(a_{i,j}))_{1\leq i,j \leq 2}$ and $shA:(sh(a_{i,j}))_{1\leq i,j \leq 2}$. Moreover, define $E=\vec{e} \otimes \vec{e}$, where $\vec{e}=(1,1)$.
We now set the model with $\vec{h}=0$ and
\begin{eqnarray*}
\xi(A)=(\vec{\beta}\otimes \vec{\beta}) \odot (chA-E)
\end{eqnarray*}
Define a matrix path as $\Phi(q)=\frac{q}{2}Q$ and $\phi(q)=\langle \xi''(\Phi(q)),\Phi'(q)^{\circ 2} \rangle ^{-\frac{1}{2}}$.

\begin{eqnarray*}
\phi''(q)=\frac{1}{4} \langle \xi''(\Phi(q)),\Phi'(q)^{\circ2} \rangle^{-\frac{5}{2}}[3\langle \xi'''(\Phi(q)),\Phi'(q)^{\circ3} \rangle^2-2\langle \xi''(\Phi(q)),\Phi'(q)^{\circ 2} \rangle \langle \xi''''(\Phi(q)), \Phi'(q)^{\circ4} \rangle].
\end{eqnarray*}
Since the copies are at the same temperature, the conclusion that $\phi''(q)<0$ can be deduced by the fact that $th^2(1.1)<\frac{2}{3}$ and then $3sh^2q<2ch^2q,$ for $q<1.1$. Hence $\phi(q)$ is concave on $[0,2].$ 

Since $\phi(0)<\sqrt{2}$ and $\phi(2)>0$, there exists a unique $q_0$ such that $\phi(q_0)=\frac{1}{\sqrt{2}}(2-q_0).$ Define a distribution function as follows:
 \begin{equation*} x(q)= \left\{
\begin{array}{lcl}
-\sqrt{2}\phi'(q)  \qquad 0 \leq q \leq q_0   \\
1   \qquad q \geq q_0. \\
\end{array} \right. \end{equation*} 

We claim that $(x,\Phi)$ is a minimizer of $\mathscr{C}$ for the model $\xi$. 
\begin{proof}
Based on the definition of $x$ and $\Phi,$ for $0 \leq q \leq q_0$ we obtain
\begin{eqnarray*}
\hat{\Phi}(q)&=&Q-\Phi(q_0)-\sqrt{2}\phi(q_0)\frac{1}{2}Q+\sqrt{2}\phi(q)\frac{1}{2}Q \\
&=&Q-\frac{q_0}{2}Q-\frac{1}{2}(2-q_0)Q+\frac{\sqrt{2}}{2}\phi(q)Q \\
&=&\sqrt{2}\phi(q)\Phi'(q)
\end{eqnarray*}
therefore
\begin{equation*}
\langle \hat{\Phi}(q)^{-1} \Phi'(q) , \hat{\Phi}(q)^{-1} \Phi'(q) \rangle=\phi(q)^{-2}=\langle \xi''(\Phi(q)),\Phi'(q)^{\circ 2} \rangle.
\end{equation*}

Recall the function \begin{equation*}
f(u)=\langle \xi'(\Phi(u))+\vec{h}\vec{h}^T-\int^u_0\hat{\Phi}(s)^{-1}\Phi'(s)\hat{\Phi}(s)^{-1}ds, \Phi'(u) \rangle.
\end{equation*}
Since $f(0)=0$, we then obtain that $f(q)=0$ for $0 \leq q \leq q_0.$
Moreover, since 
\[
\phi(q)>\frac{1}{\sqrt{2}}(2-q)=\langle (Q-\Phi(q))^{-1} \Phi'(q) , (Q-\Phi(q))^{-1} \Phi'(q) \rangle,\]
we obtain 
\begin{equation*}
\langle \xi''(\Phi(q)),\Phi'(q)^{\circ 2} \rangle=\phi(q)^{-\frac{1}{2}}<\langle (Q-\Phi(q))^{-1} \Phi'(q) , (Q-\Phi(q))^{-1} \Phi'(q) \rangle.
\end{equation*}
Therefore if we define $h(s)=\int^s_0 f(q) dq$, we get that $h(s)=0$ for $s<q_0$ and $f(s)<0$ for $s>q_0.$

Thus $(x,\Phi)$ minimizes the Crisanti-Sommers functional $\mathscr{C}$ in this case. 
\end{proof}

\section{Properties of the Crisanti-Sommers functional at zero temperature}\label{Sec5}

In this section we provide some useful characterizations of minimizers of \eqref{eq:GSEW}.
\begin{theorem}
Let $(L, \alpha, \Phi) \in \mathscr{K}(Q)$. Define 
\begin{eqnarray*}
g(t):=\int^m_t \bar{g}(s) ds
\end{eqnarray*}
and
\begin{eqnarray*}
\bar{g}(t)=\langle \Phi'(t), \bar{G}(t) \rangle
\end{eqnarray*}

where
\begin{eqnarray*}
\bar{G}(t)=\xi'(\Phi(t))-\int^t_0  (L-\int^s_0 \alpha(q) \Phi'(q) dq )^{-1} \Phi'(s) (L-\int^s_0 \alpha(q) \Phi'(q) dq )^{-1}ds.
\end{eqnarray*}

Then $(L,\alpha,\Phi)$ is the minimizer of $\mathscr{C}$ if and only if the following equation holds,
\begin{equation*}
\xi'(Q)+\vec{h}\vec{h}^T=\int^m_0 (L-\int^t_0 \alpha(s) \Phi'(s)ds)^{-1} \Phi'(t) (L- \int^t_0 \alpha(s) \Phi'(s) ds )^{-1} dt
\end{equation*}
and the function g satisfies $\min_{u \in [0,m]} g(u) \geq 0$ and $\gamma_0(S)=\gamma_0([0,m))$, where $S:= \{ u \in [0,m) | g(u) =0 \}$.

\end{theorem}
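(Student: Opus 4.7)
The plan is to obtain the characterization through first-order (KKT-type) conditions for the convex minimization of $\mathscr{C}$ over $(L,\alpha)$ with $\Phi$ held fixed. For fixed $\Phi$, the map $(L,\alpha)\mapsto \mathscr{C}(L,\alpha,\Phi)$ is convex: with $D(t):=L-\int_0^t \alpha(s)\Phi'(s)\,ds$, the term $\int_0^m \langle D(t)^{-1},\Phi'(t)\rangle\,dt$ is convex in $(L,\alpha)$ by operator convexity of the matrix inverse (applied to the positive-definite $D(t)$ paired against $\Phi'(t)\succeq 0$), while the remaining terms are affine in $(L,\alpha)$. Vanishing of the appropriate Gateaux derivatives is therefore both necessary and sufficient for minimality.

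First I would compute $\partial_L \mathscr{C}$. A symmetric perturbation $L\mapsto L+\epsilon M$ yields
\[
\partial_L\mathscr{C}(M) = \tfrac{1}{2}\Big\langle \xi'(Q)+\vec{h}\vec{h}^{T}-\int_0^m D(t)^{-1}\Phi'(t)D(t)^{-1}\,dt,\ M\Big\rangle,
\]
so $\partial_L\mathscr{C}=0$ for every symmetric $M$ precisely when the first equation in the theorem holds. Equivalently, this condition amounts to $\bar{G}(m)=-\vec{h}\vec{h}^{T}$ (or $\bar G(m)=0$ under the sign convention absorbing $\vec h\vec h^T$ into $\bar G$), which will serve as a clean boundary term in the subsequent $\alpha$-variation. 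Next I would perturb $\alpha\mapsto \alpha+\epsilon\tilde\alpha$. Using $\delta D(t)=-\epsilon\int_0^t\tilde\alpha(s)\Phi'(s)\,ds$, Fubini, and the identity $\bar{G}'(t)=\xi''(\Phi(t))\odot\Phi'(t)-D(t)^{-1}\Phi'(t)D(t)^{-1}$, one obtains
\[
\partial_\alpha\mathscr{C}(\tilde\alpha) = \tfrac{1}{2}\int_0^m \tilde\alpha(s)\,\big\langle \Phi'(s),\,\bar{G}(s)-\bar{G}(m)\big\rangle\, ds.
\]
Once the first condition is in force, $\bar{G}(m)$ is absorbed and this reduces to $\tfrac{1}{2}\int_0^m \tilde\alpha(s)\bar{g}(s)\,ds$. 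Integration by parts against the Stieltjes measure $d\tilde\gamma_0$ of $\tilde\alpha$, together with $g(m)=0$ and $g'=-\bar g$, recasts the derivative as $\tfrac{1}{2}\int_{[0,m)} g(u)\,d\tilde\gamma_0(u)$.

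The characterization now reduces to standard convex-analytic KKT conditions over the cone of nonnegative nondecreasing functions $\alpha\in\mathscr{N}$, equivalently the cone of nonnegative Stieltjes measures $\gamma_0$. Admissible one-sided perturbations preserve nonnegativity and monotonicity, and the optimality inequality $\int g\,d\tilde\gamma_0\geq 0$ must hold in every such direction. Taking $d\tilde\gamma_0 = \delta_u$ for $u\in[0,m)$ (adding mass, always feasible) forces $g(u)\geq 0$, giving the condition $\min_{[0,m]} g\geq 0$. Taking $d\tilde\gamma_0 = -d\gamma_0$ (removing mass from $\gamma_0$, feasible at small $\epsilon$) combined with $g\geq 0$ forces $\int g\,d\gamma_0=0$, hence $\gamma_0(\{g>0\})=0$, i.e. $\gamma_0(S)=\gamma_0([0,m))$. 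Sufficiency is immediate from convexity: any $(L,\alpha)$ satisfying the three stated conditions is a global minimizer at the given $\Phi$, and Remark~\ref{Rem:aloha} explains why no explicit $\Phi$-equation appears in the statement (the values of $\Phi$ off $\mathrm{supp}\,\gamma_0$ are irrelevant to $\mathscr{C}$).

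The main technical obstacle will be carefully justifying the variational calculus in $\alpha\in\mathscr{N}$: perturbations must preserve nonnegativity and monotonicity, so one must work with the cone of admissible signed Stieltjes measures (not unconstrained linear perturbations) and verify that one-sided directional derivatives yield precisely the KKT inequalities above. A secondary subtlety is the simultaneous invocation of the first condition during the $\alpha$-variation, needed to eliminate the boundary term $\bar{G}(m)$; without this coupling the derivative does not reduce to the clean pairing $\int g\,d\gamma_0$ that underlies the stated conditions on $g$ and $S$.
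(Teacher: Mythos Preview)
Your approach is essentially the same as the paper's: both derive the stated conditions from first-order variational inequalities for $\mathscr{C}$, separating out the $L$-piece (yielding the matrix equation) and the $\alpha$-piece (yielding, after Fubini/integration by parts, the nonnegativity and complementary-slackness conditions on $g$). The paper carries out the computation by taking convex combinations $(L_\theta,\alpha_\theta,\Phi_\theta)=(1-\theta)(L_0,\alpha_0,\Phi_0)+\theta(L,\alpha,\Phi)$ and differentiating at $\theta=0$; it also varies $\Phi$ and uses this to obtain the stronger matrix identity $\bar G(t)=0$ on $\mathrm{supp}\,\gamma_0$ (not part of the theorem statement) before passing to the scalar conclusions about $g$. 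You instead hold $\Phi$ fixed, observe the convexity of $(L,\alpha)\mapsto \mathscr{C}(L,\alpha,\Phi)$, and run the KKT argument directly over the cone $\mathscr N$---this is slightly cleaner and makes the sufficiency in the $(L,\alpha)$-variables immediate, whereas the paper's converse is the single line ``by the uniqueness of the minimizer.'' Both arguments leave the $\Phi$-optimality somewhat implicit; your appeal to Remark~\ref{Rem:aloha} is in the same spirit as the paper's treatment.
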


\begin{proof}
The proof of this theorem is standard. Assume $(L_0, \alpha_0,\Phi_0)$ is minimizer of $\mathscr{C} (L,\alpha,\Phi)$, $\gamma_0$ is the corresponding measure induced by $\alpha(s)$ and consider any $(L,\alpha,\Phi) \in \mathscr{K}$. For $0 \leq \theta \leq 1$, $(L_\theta, \alpha_\theta, \Phi_\theta)$ also lies in $\mathscr{K}$, where $L_\theta =(1-\theta)L_0+\theta L, \alpha_\theta=(1-\theta)\alpha_0 +\theta \alpha$ and $\Phi_\theta=(1-\theta)\Phi_0+\theta \Phi$.

As $(L_0, \alpha_0,\Phi_0)$ minimizes $\mathscr{C}$, we obtain
\begin{align*}
&\frac{\partial \mathscr{C}(L_\theta,\alpha_\theta,\Phi_\theta)}{\partial \theta} \big | _{\theta=0}=\frac{1}{2}  \bigg[ \langle \xi'(Q)+\vec{h}\vec{h}^T,L-L_0 \rangle 
\\&+ \int^m_0 \langle (L-L_0-\int^t_0 (\alpha(s)-\alpha_0(s) )\Phi_0'(s)ds)^{-1} ,\Phi_0'(t) \rangle dt \\
&- \int^m_0 \langle \xi''(\Phi_0(t)) \odot \Phi_0'(t), \int^t_0 (\alpha(s)-\alpha_0(s)) \Phi_0'(s) ds \rangle dt \\
&+ \int^m_0 \langle (L-\int^t_0 \alpha(s) \Phi_0'(s)ds )^{-1}, \Phi'(t)-\Phi_0'(t) \rangle dt\\
&- \int^m_0 \langle (L-\int^t_0 \alpha(s) \Phi_0'(s)ds )^{-1} \Phi'(t) (L-\int^t_0 \alpha_0(s) \Phi'(s)ds)^{-1} , \int^t_0 \alpha(s) (\Phi'(s)-\Phi_0'(s))ds \rangle dt\\
&-\int^m_0 \langle \xi'''(\Phi_0(t)) \odot (\Phi(t)-\Phi_0(t)) \odot \Phi_0'(t)+ \xi''(\Phi_0(t)) \odot (\Phi'(t)-\Phi_0'(t)), \int^t_0 \alpha(s) \Phi_0'(s)ds \rangle dt \bigg]\\
 &\geq 0.
\end{align*}
Extra algebra leads to: 
\begin{align*}
&\frac{\partial \mathscr{C}(L_\theta,\alpha_\theta,\Phi_\theta)}{\partial \theta} \big | _{\theta=0}\\
&=\frac{1}{2} \bigg[ \langle L-L_0, \xi'(Q)+\vec{h}\vec{h}^T-\int^m_0 (L_0-\int^t_0 \alpha_0(s) \Phi_0'(s)ds)^{-1} \Phi_0'(t) (L_0- \int^t_0 \alpha_0(s) \Phi_0'(s) ds )^{-1} dt \rangle\\
&+\int^m_0 \langle \int^t_0 (\alpha(s) -\alpha_0(s)) \Phi_0'(s)ds, (L-\int^t_0 \alpha_0(s) \Phi'(s)ds )^{-1}\Phi'_0(t)(L-\int^t_0 \alpha_0(s) \Phi_0'(s)ds)^{-1}\rangle dt\\
&-\int^m_0 \langle \int^t_0 (\alpha(s) -\alpha_0(s)) \Phi_0'(s)ds,\xi''(\Phi_0(t) \odot \Phi_0'(t) \rangle dt\\
&+\int^m_0 \langle (L_0- \int^t_0 \alpha_0(s) \Phi'_0(s)ds)^{-1} \Phi'_0(t) (L_0-\int^t_0\alpha_0 \Phi'_0(s)ds )^{-1}, \int^t_0 \alpha_0(s) (\Phi'(s)-\Phi'_0(s))ds \rangle dt \\
&- \int^m_0 \langle \xi''(\Phi_0(t)) \odot \Phi'_0(t),\int^t_0 \alpha_0(s) (\Phi'(s)-\Phi'_0(s))ds \rangle dt\\
&+\int^m_0 \langle (L_0- \int^t_0 \alpha_0(s) \Phi'_0(s)ds)^{-1} , \Phi'(t) -\Phi'_0(t) \rangle dt \\
&- \int^m_0 \langle \xi'''(\Phi_0(t)) \odot (\Phi(t)-\Phi_0(t)) \odot \Phi'(t) + \xi''(\Phi_0(t)) \odot (\Phi'(t)-\Phi'_0(t)), \int^t_0 \alpha_0(s) \Phi_0'(s)ds \rangle dt \bigg].
\end{align*}
Based on the arbitrariness of $\alpha, L$ and $\Phi$, we obtain the the relations:
\begin{eqnarray*}
\xi'(Q)+\vec{h}\vec{h}^T=\int^m_0 (L_0-\int^t_0 \alpha_0(s) \Phi_0'(s)ds)^{-1} \Phi_0'(t) (L_0- \int^t_0 \alpha_0(s) \Phi_0'(s) ds )^{-1} dt,
\end{eqnarray*}
\begin{eqnarray*}
\int^m_0 \langle \int^t_0 (\alpha(s) -\alpha_0(s)) \Phi_0'(s)ds, (L-\int^t_0 \alpha_0(s) \Phi'(s)ds )^{-1}\Phi'_0(t)(L-\int^t_0 \alpha_0(s) \Phi_0'(s)ds)^{-1} \rangle\\
-\int^m_0 \langle \int^t_0 (\alpha(s) -\alpha_0(s)) \Phi_0'(s)ds, \xi''(\Phi_0(t) \odot \Phi_0'(t) \rangle dt \geq 0,
\end{eqnarray*}
and
\begin{align*}
&\int^m_0 \langle (L_0- \int^t_0 \alpha_0(s) \Phi'_0(s)ds)^{-1} , \Phi'(t) -\Phi'_0(t) \rangle dt \\
- &\int^m_0 \langle \xi'''(\Phi_0(t)) \odot (\Phi(t)-\Phi_0(t)) \odot \Phi'(t) + \xi''(\Phi_0(t)) \odot (\Phi'(t)-\Phi'_0(t)), \int^t_0 \alpha_0(s) \Phi_0'(s)ds \rangle dt \\
=& \int^m_0 \alpha_0(t) \langle \Phi(t) -\Phi_0(t), (L_0- \int^t_0 \alpha_0(s) \Phi'_0(s)ds)^{-1}  \Phi_0'(t) (L_0- \int^t_0 \alpha_0(s) \Phi'_0(s)ds)^{-1} \rangle\\
 -&\int^m_0 \alpha_0(t) \langle \Phi(t) -\Phi_0(t),  \xi''(\Phi_0(t)) \odot \Phi'(t) \rangle dt\geq 0
\end{align*}
Hence, writing
\[ \mathcal Z(s)=(L_0-\int^s_0 \alpha_0(q) \Phi'_0(q) dq )^{-1},
\]
\begin{align*}
\int^m_0 (\alpha(t)-\alpha_0(t))\langle \Phi'(t), \xi'(\Phi(t))-\int^t_0  \mathcal Z(s) \Phi_0'(s) \mathcal Z(s) ds \rangle dt \geq 0.
\end{align*}

If $t$ is an isolated point in supp $\gamma_0$, then $\Phi'(t)$ can be any symmetric matrix, which implies that $\bar{G}(t)=0$. If $t$ is not isolated, as $\Phi$ is continuously differentiable and Lipschitz, we can get the same conclusion based on approximation.  Last, define $g(t):=\int^m_t \bar{g}(s) ds$, then $g(t)$ satisfies $\min_{u \in [0,m]} g(u) \geq 0$ and $\gamma_0(S)=\gamma_0([0,m))$ where $S:= \{ u \in [0,m) | g(u) =0 \}$.
The converse direction can be proved by the uniqueness of the minimizer.
\end{proof}

\begin{proposition}
The model is replica symmetric at zero temperature if and only if 
\begin{equation*}
\xi'(Q)+\vec{h}\vec{h}^T \geq \xi''(Q) \odot Q
\end{equation*}
In this case, the minimizer $(L_0,\alpha_0,\Phi_0)$ is given by 
\begin{eqnarray*}
L_0=Q^{\frac{1}{2}}(Q^{\frac{1}{2}} (\xi'(Q)+\vec{h}\vec{h}^T) Q^{\frac{1}{2}})^{-\frac{1}{2}}Q^{\frac{1}{2}}, \alpha_0=0 \text{ and } \Phi_0=\frac{t}{m}Q.
\end{eqnarray*}
\end{proposition}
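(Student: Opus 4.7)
The plan is to apply the characterization theorem just stated to the candidate triple $(L_0,\alpha_0,\Phi_0)$ with $\alpha_0\equiv 0$ and $\Phi_0(t)=(t/m)Q$. Since $\alpha_0\equiv 0$ makes the associated measure trivial, the auxiliary condition $\gamma_0(S)=\gamma_0([0,m))$ is automatic, and only two items need to be checked: the critical point equation
\[
\xi'(Q)+\vec{h}\vec{h}^T \;=\; \int_0^m L_0^{-1}\Phi_0'(t)L_0^{-1}\,dt \;=\; L_0^{-1}QL_0^{-1},
\]
and the inequality $\min_{u\in[0,m]}g(u)\geq 0$. For the critical point equation, setting $M:=Q^{1/2}(\xi'(Q)+\vec{h}\vec{h}^T)Q^{1/2}$ gives $L_0=Q^{1/2}M^{-1/2}Q^{1/2}$, so the direct calculation $L_0^{-1}QL_0^{-1}=Q^{-1/2}M^{1/2}M^{1/2}Q^{-1/2}=\xi'(Q)+\vec{h}\vec{h}^T$ confirms it and singles out $L_0$ as the positive semidefinite solution.

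For the sufficient direction, I would substitute the candidate into $\bar{G}$ (using the critical point identity to replace $L_0^{-1}QL_0^{-1}$ by $\xi'(Q)+\vec{h}\vec{h}^T$) and obtain
\[
\bar{G}(t) \;=\; \xi'\!\bigl(\tfrac{t}{m}Q\bigr) + \bigl(1-\tfrac{t}{m}\bigr)\vec{h}\vec{h}^T - \tfrac{t}{m}\,\xi'(Q) \;=:\; f\!\bigl(\tfrac{t}{m}\bigr),
\]
with $f(0)=\vec{h}\vec{h}^T\geq 0$ and $f(1)=0$ in the PSD order. The heart of the argument is to show that the matrix hypothesis $\xi'(Q)+\vec{h}\vec{h}^T\geq\xi''(Q)\odot Q$ forces $f(v)\geq 0$ for every $v\in[0,1]$. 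Differentiating,
\[
f'(v) \;=\; Q\odot\xi''(vQ) - \bigl(\vec{h}\vec{h}^T+\xi'(Q)\bigr) \;=\; \sum_{p\geq 2}p(p-1)v^{p-2}(\vec{\beta}_p\otimes\vec{\beta}_p)\odot Q^{\circ(p-1)} - \bigl(\vec{h}\vec{h}^T+\xi'(Q)\bigr).
\]
Each matrix $(\vec{\beta}_p\otimes\vec{\beta}_p)\odot Q^{\circ(p-1)}$ is positive semidefinite by Schur's product theorem (the rank-one outer product $\vec{\beta}_p\otimes\vec{\beta}_p$ is PSD, and every Hadamard power of the PSD matrix $Q$ is PSD), so $v\mapsto f'(v)$ is non-decreasing in the PSD order. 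The hypothesis is exactly $f'(1)\leq 0$; hence $f'(v)\leq 0$ on $[0,1]$, which together with $f(1)=0$ yields $f(v)\geq 0$ throughout $[0,1]$. Consequently $\bar{G}(t)\geq 0$, and since $Q\geq 0$, $\bar{g}(t)=\tfrac{1}{m}\langle Q,\bar{G}(t)\rangle=\tfrac{1}{m}\mathrm{tr}(Q\bar{G}(t))\geq 0$; therefore $g(u)\geq 0$, and by the characterization theorem $(L_0,0,(t/m)Q)$ minimizes $\mathscr{C}$.

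For the reverse implication I would suppose $(L_0,0,\Phi_0)$ is a minimizer and deduce from the characterization theorem that $g\geq 0$ on $[0,m]$. Because $\bar{G}(m)=0$ forces $g(m)=g'(m)=0$, a second-order examination at the right endpoint yields $g''(m)\geq 0$, which after the algebra above reduces to the projection of $\xi'(Q)+\vec{h}\vec{h}^T-\xi''(Q)\odot Q$ along $Q$. The main obstacle is upgrading this scalar inequality to the full Löwner inequality in the proposition, since a one-parameter perturbation in $\alpha$ alone only probes a single direction. The plan is to exploit the extra freedom that exists when $\alpha_0\equiv 0$: for any admissible path $\Phi$ with $\Phi(0)=0$, $\Phi(m)=Q$ the value $\mathscr{C}(L_0,0,\Phi)=\tfrac{1}{2}[\langle\xi'(Q)+\vec{h}\vec{h}^T,L_0\rangle+\langle L_0^{-1},Q\rangle]$ is the same, so each such triple is also a minimizer and must individually satisfy the first-order condition $\int_0^m \delta(s)\langle\Phi'(s),\bar{G}_\Phi(s)\rangle\,ds\geq 0$ for every admissible $\delta$. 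Running this condition through a two-parameter family $(\alpha,\Phi)=(\theta_2\delta,\Phi_0+\theta_1\Psi)$ with $\Psi(0)=\Psi(m)=0$, concentrating $\delta$ near $m$ and letting $\Psi'$ sweep out arbitrary PSD directions at $t=m$, produces a necessary inequality in each eigendirection of $\xi'(Q)+\vec{h}\vec{h}^T-\xi''(Q)\odot Q$, thereby recovering the matrix inequality. Equivalently, if the Löwner inequality fails along some direction $\vec{u}$, one constructs an explicit perturbation of $(\alpha,\Phi)$ supported near $m$ in that direction that strictly decreases $\mathscr{C}$, contradicting the assumed RS minimality and yielding the required converse.
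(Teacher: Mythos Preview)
Your sufficient direction (the matrix hypothesis implies RS) follows the same monotonicity idea as the paper. The paper works directly with the scalar $\bar g$: it shows $\bar g'(t)\le 0$ from $\langle\xi''(\Phi(t))\odot Q,Q\rangle\le\langle\xi''(Q)\odot Q,Q\rangle$ together with the hypothesis, combines this with $\bar g(m)=0$ to get $\bar g\ge 0$, and hence $g\ge 0$. Your matrix-level route through $\bar G\ge 0$ is a pleasant strengthening of the same computation, but there is a slip: by definition $\bar G(t)=\xi'(\Phi(t))-\int_0^t L_0^{-1}\Phi'(s)L_0^{-1}\,ds$ carries no $\vec h\vec h^{T}$ term, and after inserting the critical point identity one obtains $\bar G(t)=\xi'((t/m)Q)-(t/m)\bigl(\xi'(Q)+\vec h\vec h^{T}\bigr)$. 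Thus your $f$ is actually $\bar G+\vec h\vec h^{T}$ and $\bar G(m)=-\vec h\vec h^{T}$, not $0$. When $\vec h=0$ this is harmless and your argument goes through verbatim; when $\vec h\neq 0$ the boundary value used both here and in the paper needs care.

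Your necessary direction diverges from the paper. The paper argues by contradiction in one line with the fixed choice $\Phi(t)=(t/m)Q$: assuming the strict reverse inequality $\xi'(Q)+\vec h\vec h^{T}<\xi''(Q)\odot Q$ yields $\bar g'(m)=\tfrac{1}{m^2}\langle\xi''(Q)\odot Q-\xi'(Q)-\vec h\vec h^{T},Q\rangle>0$, hence $\bar g<0$ and $g<0$ near $m$. You instead extract a second-order condition at $m$ and, correctly flagging that a single scalar test only probes one direction in matrix space, propose to exploit the $\Phi$-independence of $\mathscr C(L_0,0,\Phi)$ to sweep out all PSD directions via a two-parameter family in $(\alpha,\Phi)$. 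The observation that every $(L_0,0,\Phi)$ is again a minimizer is a nice one, but the execution is incomplete: the sign you must control is
\[
\bar g_\Phi'(m)=\bigl\langle\Phi'(m),\ \xi''(Q)\odot\Phi'(m)-L_0^{-1}\Phi'(m)L_0^{-1}\bigr\rangle,
\]
which is \emph{quadratic} in $\Phi'(m)$; taking $\Phi'(m)=\vec u\vec u^{T}$ gives $\sum_{i,j}u_i^2u_j^2\,\xi''_{ij}(Q_{ij})-(\vec u^{T}L_0^{-1}\vec u)^2$, not the linear form $\vec u^{T}\bigl(\xi''(Q)\odot Q-\xi'(Q)-\vec h\vec h^{T}\bigr)\vec u$ that you want to test. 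So as written the sweeping argument does not yet recover the full L\"owner inequality, and the gap you identified remains open in your proposal as well.
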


\begin{proof}
Firstly, if the model is replica symmetric at zero temperature, then
\begin{eqnarray*}
\xi'(Q)+\vec{h}\vec{h}^T- L_0^{-1} Q L_0^{-1}=0.
\end{eqnarray*}
Also as there is no point in supp $\gamma_0$, we can define $\Phi$ by $\Phi(t)=\frac{t}{m}Q$, and hence $\Phi'(t)=\frac{1}{m}Q.$

We prove this proposition by contradiction.  Assume $\xi'(Q)+\vec{h}\vec{h}^T < \xi''(Q) \odot Q$. Then,
\begin{eqnarray*}
\bar{g}'(m)&=&\langle \xi''(Q)\odot \Phi'(m)- L_0 ^{-1} \Phi'(m) L_0^{-1} ,\Phi'(m) \rangle\\
&=&\frac{1}{m^2} \langle \xi''(Q)\odot Q-\xi'(Q)-\vec{h}\vec{h}^T , Q \rangle >0,
\end{eqnarray*}
which implies that there exists $s_0 \in (0,m)$, such that $\bar{g}'(s) >0$ for all $s \in [s_0,m]$. Therefore for all $s \in [s_0,m)$,we obtain $\bar{g}(s) <0$ and $g(s)<0$, which leads to a contradiction.

Conversely, if $\xi'(Q)+\vec{h}\vec{h}^T \geq \xi''(Q) \odot Q$,
let \begin{eqnarray*}
L_0=Q^{\frac{1}{2}}(Q^{\frac{1}{2}} (\xi'(Q)+\vec{h}\vec{h}^T) Q^{\frac{1}{2}})^{-\frac{1}{2}}Q^{\frac{1}{2}}, \alpha_0=0 \text{ and } \Phi_0=\frac{t}{m}Q.
\end{eqnarray*}
Then $\int^m_0 (L_0-\int^s_0 \alpha_0(q) \Phi'(q) dq)^{-1} \Phi'(s)  (L_0-\int^s_0 \alpha_0(q) \Phi'(q) dq)^{-1}ds=L_0^{-1}QL_0^{-1}=\xi'(Q)+\vec{h}\vec{h}^T$.
Futhermore,
\begin{align*}
\bar{g}(t)&=\langle \xi'(\Phi(t))-\int^t_0 (L_0-\int^s_0 \alpha_0(q) \Phi'(q) dq)^{-1} \Phi'(s)  (L_0-\int^s_0 \alpha_0(q) \Phi'(q) dq)^{-1}ds,\Phi'(t) \rangle , \bar{g}(m)\\ 
&=0
\end{align*}
and
\begin{eqnarray*}
\bar{g}'(t)=  \frac{1}{m^2} \langle \xi''(\Phi(t))\odot Q- L_0^{-1} Q  L_0^{-1},Q \rangle &\leq& \frac{1}{m^2} \langle \xi''(Q)\odot Q- L_0^{-1} Q  L_0^{-1} ,Q\rangle \\
&=&\frac{1}{m^2 } \langle \xi''(Q)\odot Q-\xi'(Q)-\vec{h}\vec{h}^T,Q \rangle \leq 0,
\end{eqnarray*}
which implies that $g(u) >0$ for $u \in (0,m)$. Since $S=\emptyset$ and $\gamma_0(S)=0=\gamma([0,m))$, we conclude that $(L_0,\alpha_0,\Phi_0)$ is minimizer. which means that the model is replica symmetric at zero temperature.
\end{proof}

{\bf Acknowledgement:} Both authors would like to thank Wei-Kuo Chen and Justin Ko for useful insights on an early version of this manuscript. Y. Z. thanks Justin Ko for explaining the proof of the variational problem \eqref{eq:CSFormula} in detail.


\begin{thebibliography}{1}
\bibitem{AS2}
	M.~Aizenman, R.~Sims, and S.~L. Starr.
	\newblock Extended variational principle for the Sherrington-Kirkpatrick
	spin-glass model.
	\newblock {\em Phys. Rev. B}, 68:214403, Dec 2003.
	
	\bibitem{AB}
A.~Auffinger and G.~Ben~Arous.
\newblock Complexity of random smooth functions on the high-dimensional sphere.
\newblock {\em Ann. Probab.}, 41(6):4214--4247, 2013.

\bibitem{ABC}
A.~Auffinger, G.~Ben~Arous, and J.~{\v{C}}ern{\'y}.
\newblock Random matrices and complexity of spin glasses.
\newblock {\em Comm. Pure Appl. Math.}, 66(2):165--201, 2013.
	
	\bibitem{ParisiMeasure}
A. Auffinger, and W.-K. Chen.  The Parisi formula has a unique minimizer. {\em Comm. Math. Phys.},  335, no. 3, 1429--1444, (2015).
	
	\bibitem{parisigroundstate2}
A.~Auffinger, and W.-K. Chen.
	\newblock Parisi formula for the ground state energy in the mixed {$p$}-spin
	model.
	\newblock  {\em Ann. Probab.}, 45(6B):4617--4631, 2017.
	
\bibitem{1} A. Auffinger, and W.-K. Chen. On properties of Parisi measures, {\em Probab. Theory and Rel. Fields.}, Vol. 161, Issue 3, pp 817-850, 2015.

\bibitem{AJ} A. Auffinger, , and A. Jagannath. Thouless-Anderson-Palmer equations for generic $p$-spin glasses. {\em Ann. Probab.} 47 (2019), no. 4, 2230--2256. 


	\bibitem{CASS}
	W.-K. Chen.
	\newblock  The {A}izenman-{S}ims-{S}tarr scheme and {P}arisi formula for mixed
	{$p$}-spin spherical models.
	\newblock  {\em Electron. J. Probab.}, 18:no. 94, 14, 2013.
	
	\bibitem{chenchaos}
	W.-K. Chen, H.-W. Hsieh, C.-R. Hwang, and Y.-C. Sheu.
	\newblock  Disorder chaos in the spherical mean-field model.
	\newblock  {\em J. Stat. Phys.}, 160(2):417--429, 2015.
	
	\bibitem{chen2017temperature}
	W.-K. Chen, and D.~Panchenko.
	\newblock  Temperature chaos in some spherical mixed {$p$}-spin models.
	\newblock {\em J. Stat. Phys.}, 166(5):1151--1162, 2017.
	
	\bibitem{parisigroundstate1}
	W.-K. Chen, and A.~Sen.
	\newblock  Parisi formula, disorder chaos and fluctuation for the ground state
	energy in the spherical mixed {$p$}-spin models.
	\newblock  {\em Comm. Math. Phys.}, 350(1):129--173, 2017.

\bibitem{2} A. Crisanti and H.-J. Sommers. The spherical p-spin interaction spin glass model: the statics, {\em Zeitschrift fur Physik B Condensed Matter}, 87(3):341-354, 1992.

	\bibitem{replicaonandoff}
	S.~Franz, G.~Parisi, and M.~Virasoro.
	\newblock The replica method on and off equilibrium.
	\newblock  {\em  http://dx.doi.org/10.1051/jp1:1992115}, 2, 10 1992.
	
	\bibitem{freeenergycostforultrametricity}
	S.~Franz, G.~Parisi, and M.~A. Virasoro.
	\newblock Free-energy cost for ultrametricity violations in spin glasses.
	\newblock{\em  Europhysics Letters ({EPL})}, 22(6):405--411, May 1993.


		\bibitem{Guerra}
  		 Guerra, F.: Broken replica symmetry bounds in the mean field spin glass model. {\it Comm. Math. Phys.}, {\bf 233}, no. 1, (2003).
%  		

\bibitem{dualitygroundstate}
	A.~Jagannath, and I.~Tobasco.
	\newblock Low temperature asymptotics of spherical mean field spin glasses.
	\newblock {\em  Comm. Math. Phys.,} 352(3):979--1017, 2017.
	
	\bibitem{phasediagram}
	A.~Jagannath, and I.~Tobasco.
	\newblock Some properties of the phase diagram for mixed {$p$}-spin glasses.
	\newblock  {\em Probab. Theory Related Fields}, 167(3-4):615--672, 2017.



\bibitem{JK1} J. Ko. \textit{The Crisanti-Sommers Formula for Spherical Spin Glasses with Vector Spins}, arXiv:1911.04355, 2019.
\bibitem{Ko} J. Ko. \textit{Free energy of multiple systems of spherical spin glasses with constrained overlaps}, arXiv:1806.09772, 2018.


\bibitem{panchenko2015free}
	D.~Panchenko.
	\newblock The free energy in a multi-species {S}herrington-{K}irkpatrick model.
	\newblock {\em  Ann. Probab.,} 43(6):3494--3513, 2015.
	
\bibitem{MPV}
 		M. M\'{e}zard, G. Parisi, and M. Virasoro.: Spin glass theory and beyond. World Scientific, {\bf 9}, Singapore, (2004).

\bibitem{4}
	D.~Panchenko.
	\newblock Free energy in the mixed {$p$}-spin models with vector spins.
	\newblock  {\em Ann. Probab.,} 46(2):865--896, 2018.
	
\bibitem{5}
	D.~Panchenko.
	\newblock Free energy in the {P}otts spin glass.
	\newblock  {\em Ann. Probab., }46(2):829--864, 2018.

\bibitem{PTSPHERE}
	D.~Panchenko, and M.~Talagrand.
	\newblock  On the overlap in the multiple spherical {SK} models.
	\newblock  {\em Ann. Probab.}, 35(6):2321--2355, 2007.

\bibitem{SK}
Sherrington, D., Kirkpatrick, S.:
\newblock Solvable model of a spin-glass.
\newblock {\em Phys. Rev. Lett.}, {\bf 35}(26):1792--1796, (1975).

\bibitem{6}M. Talagrand. Free energy of the spherical mean field model, {\em Probab. Theory Related Fields}, 134(3):339-382, 2006.
	\bibitem{Talagrand}
  		M. Talagrand. The Parisi formula. {\it Ann. of Math. (2)}, {\bf 163}, no. 1, 221--263, (2006).

	
	

\end{thebibliography}
\end{document}